\UseAllTwocells \xyoption{frame} \CompileMatrices
\newtheorem{prop}{Proposition}
\newtheorem{lem}[prop]{Lemma}
\newtheorem{thm}[prop]{Theorem}
\newtheorem{conjecture}[prop]{Conjecture}
\theoremstyle{remark}
\theoremstyle{remark}
\numberwithin{equation}{section}
\newcommand{\Mbar}{\overline{\M}}
\newcommand{\com}{\mathbb{C}}
\newcommand{\X}{\mathcal{X}}
\newcommand{\M}{\mathcal{M}}
\newcommand{\T}{\mathsf{T}}
\newcommand{\Hilb}{\mathsf{Hilb}^n(\mathbb{C}^2)}
\newcommand{\hilbnc}{\mathsf{Hilb}^n(\mathbb{C}^2)}
\newcommand{\lv}{\left |}
\newcommand{\rang}{\right\rangle}
\newcommand{\blang}{\big\langle}
\newcommand{\brang}{\big\rangle}
\newcommand{\la}{\langle}
\newcommand{\ra}{\rangle}
\newcommand{\RRR}{\mathsf{R}}
\newcommand{\CC}{\mathbb{C}}
\newcommand{\cO}{\mathcal{O}}
\newcommand{\cI}{\mathcal{I}}
\newcommand{\NL}{\mathsf{NL}}
\def\b1{{\mathbf 1}}
\begin{document}

\setcounter{section}{-1}

\title[Gromov-Witten theory of $\Hilb$ and Noether-Lefschetz theory of $\mathcal{A}_g$]{Gromov-Witten theory of $\Hilb$ and
 Noether-Lefschetz theory of $\mathcal{A}_g$}
\author[Iribar L\'opez]{Aitor Iribar L\'opez}
\address{Department of  Mathematics\\ ETH Z\"urich \\ R\"amistrasse 101 \\ 8092 Z\"urich\\ Switzerland}
\email{aitor.iribarlopez@math.ethz.ch}

\author[Pandharipande]{Rahul Pandharipande}
\address{Department of  Mathematics\\ ETH Z\"urich \\ R\"amistrasse 101 \\ 8092 Z\"urich\\ Switzerland}
\email{rahul@math.ethz.ch}

\author[Tseng]{Hsian-Hua Tseng}
\address{Department of Mathematics\\ Ohio State University\\ 100 Math Tower, 231 West 18th Ave. \\ Columbus,  OH 43210\\ USA}
\email{hhtseng@math.ohio-state.edu}

\date{August 2025}

\begin{abstract}
We calculate the genus 1 Gromov-Witten theory of the Hilbert scheme 
$\Hilb$ of points in the plane. The fundamental 1-point invariant (with a divisor insertion) is calculated using a correspondence with
the families local curve Gromov-Witten theory 
over the moduli space $\overline{\mathcal{M}}_{1,1}$. The answer exactly matches
a parallel calculation related to the Noether-Lefschetz geometry of
the moduli space $\mathcal{A}_g$ of principally polarized abelian varieties.
As a consequence, we prove that the associated cycle classes satisfy a
homomorphism property for the projection operator on $\mathsf{CH}^*(\mathcal{A}_g)$.
The fundamental 1-point invariant determines the full genus 1 Gromov-Witten
theory of $\Hilb$ modulo a nondegeneracy conjecture about the quantum cohomology.
A table of calculations is given.

\end{abstract}

\maketitle

\setcounter{tocdepth}{1}
\tableofcontents

\section{Introduction}
\subsection{Hilbert schemes}
\label{sechs}
The Hilbert scheme $\Hilb$ of $n$ points in the plane $\CC^2$ is a nonsingular, irreducible,
quasi-projective variety of dimension $2n$ parameterizing ideals $\cI\subset \CC[x,y]$ of colength $n$,
$$
\dim_\CC {\CC[x,y]}/{\cI} = n \,. 
$$
An open dense subset of $\Hilb$ parameterizes ideals associated to configurations of $n$ distinct unordered points. The geometry of $\Hilb$ has been studied from many points
of view for several decades now, see
\cite{Fog,goeB,goe,Groj, Nak}. Our perspective here is related
to the interactions of $\Hilb$
with Gromov-Witten theory and the relative Donaldson-Thomas
invariants of threefolds as developed in \cite{bp,MNOP1,MNOP2,MR,MO,op,opdt, op2,PanT,pt,pt-hs}.

The algebraic torus $\T=(\CC^*)^2$ acts diagonally on $\CC^2$ by scaling coordinates,
$$
(z_1,z_2) \cdot (x,y) = (z_1 x, z_2 y)\, .
$$
Let $t_1$ and $t_2$ denote the equivariant parameters corresponding to the weights of the $\T$-action on the tangent space $\text{Tan}_0(\CC^2)$ at the origin of $\CC^2$.

There is a canonically induced $\T$-action on $\Hilb$. The associated $\T$-equivariant cohomology,
$H_{\T}^*(\Hilb,{\mathbb Q})$, admits a natural basis (as a $\mathbb{Q}[t_1,t_2]$-module) called the {\em Nakajima basis}. The Nakajima basis element $\lv \mu \rang$ corresponding to the partition $\mu$ of $n$ is
$$\frac{1}{\Pi_i \mu_i} [V_\mu]$$
where $[V_\mu]$ is (the cohomological dual of)
the class of the subvariety of $\mathsf{Hilb}^{|\mu|}(\CC^2)$
with generic element given by a union of 
schemes of lengths $$\mu_1, \ldots, \mu_{\ell(\mu)}$$ supported
at $\ell(\mu)$ distinct points{\footnote{The points and
parts of $\mu$ are considered
here to be unordered.}} of $\CC^2$. The element $\lv 1^n \rang$ corresponds to the unit 
$$\mathsf{1}\in H^*_\T(\Hilb,{\mathbb Q})\, .$$ 
See \cite{NakB} for a foundational treatment.

The Hilbert scheme carries a tautological rank $n$ vector bundle, 
\begin{equation}\label{xx88}
\cO/\cI\rightarrow \Hilb\, ,
\end{equation}
with fiber
$\CC[x,y]/\cI$ over $[\cI]\in \Hilb$.  
The $\T$-action on $\Hilb$ lifts canonically to the tautological bundle \eqref{xx88}.
Let $$
D = c_1 (\cO/\cI) \in H^2_{\T}(\Hilb, {\mathbb Q})$$
be the $\T$-equivariant first Chern class. A straightforward calculation{\footnote{The $n=0,1$ cases are degenerate: $D=0$ for both.}} for $n\geq 2$ shows 
\begin{equation*}
D = - \lv 2,1^{n-2} \rang \,,
\end{equation*}
see \cite{Lehn}.

\subsection{Quantum cohomology}
The $\T$-equivariant quantum cohomology of $\Hilb$  has been determined in \cite{op}.
The {\em matrix elements} of the $\T$-equivariant quantum product count{\footnote{The count is virtual.}} rational curves meeting three given subvarieties of $\Hilb$. 
The (non-negative) {\em degree} of an effective{\footnote{The $\beta=0$
is considered here effective.}} curve class 
$$\beta\in H_2(\Hilb,{\mathbb Z})$$ is defined by pairing with $D$,
$$d=\int_\beta D\, .$$
Curves of degree $d$ are counted with 
weight $q^d$, where $q$ is the quantum parameter.
The ordinary multiplication in $\T$-equivariant cohomology is recovered by
setting $q=0$.

Let $\mathsf{M}_D^{\mathsf{Hilb}^n(\mathbb{C}^2)}$ be the operator of quantum multiplication{\footnote{Here, the
symbol $\star$ denotes the small quantum product. In Section \ref{sec:recon_multipt_giv}, the large quantum product will also play a role (and will be denoted by $\star_t$).} by the divisor $D$,
$$\mathsf{M}_D^{\mathsf{Hilb}^n(\mathbb{C}^2)}: 
QH^*_\T(\Hilb) \rightarrow 
QH^*_\T(\Hilb)\, , \ \ \ \ \ \mathsf{M}_D^{\mathsf{Hilb}^n(\mathbb{C}^2)}(\gamma) =
D \star \gamma\, .$$
The operator $\mathsf{M}^{\Hilb}_D$ is calculated explicitly in the Nakajima basis for all $\hilbnc$ in \cite{op}. The matrix coefficients of $\mathsf{M}^{\Hilb}_D$ lie in the field of rational functions
in $q$ (with coefficients{\footnote{In the context of quantum cohomology,
the definitions require localization, so we will always consider $H^*_\T(\Hilb)$ and
$QH^*_\T(\Hilb)$ as modules
over $\mathbb{Q}(t_1,t_2)$.}}
in $\mathbb{Q}(t_1,t_2)$) and determine all genus $0$ Gromov-Witten invariants of $\hilbnc$ by \cite[Section 4.2]{op}.

We are interested here in
the $\T$-equivariant Gromov-Witten invariants of $\Hilb$ in genus 1.
Let $\mu^1,\ldots, \mu^r$ be partitions of $n$. Define
\begin{equation}\label{eqn:g1_multi}
\blang \mu^1, \mu^2, \ldots, \mu^r \brang_{1}^{\Hilb}
\, =\,  \sum_{d=0}^\infty 
\blang \mu^1, \mu^2, \ldots, \mu^r \brang_{1,d}^{\Hilb} q^d
\, \in \mathbb{Q}(t_1,t_2)[[q]].
\end{equation}
The series \eqref{eqn:g1_multi} is always a rational{\footnote{There
is a gap in the proof of rationality in the published paper \cite{pt} which
is filled in a \href{https://people.math.ethz.ch/~rahul/HilbC2-2025-August.pdf}{revision}
available on the authors' websites.
The gap does not affect the proof in \cite{pt} of the
triangle of correspondences  \cite[Theorem 2]{pt} in the form of
analytic continuation from $q=0$ to $q=-1$, which is all that is needed here.
}} function \cite{pt} in 
$q$, 
$$\blang \mu^1, \mu^2, \ldots, \mu^r \brang_{1}^{\Hilb}
 \in \mathbb{Q}(t_1,t_2,q)\, .\, $$
The first nontrivial computation in genus 1 appeared in \cite{pt}:
\begin{equation*}
\big\langle \, D\,  \big\rangle_{1}^{\mathsf{Hilb}^2(\mathbb{C}^2)} = -\frac{1}{24}\frac{(t_1+t_2)^2}{t_1t_2} \cdot \frac{q+1}{q-1}\, .
\end{equation*}
 Our goal here is to provide calculations of all of the series 
 \eqref{eqn:g1_multi}
 starting
 with the basic case of $\blang D \brang_{1}^{\Hilb}$.

The genus 0 Gromov-Witten theory of $\hilbnc$ is
a very rich subject with connections to the quantum Calogero-Sutherland many body system \cite{op},
to the crepant resolution conjecture \cite{bg}, and
to quantum groups \cite{MOk}. The question of computing $\blang D \brang_{1}^{\Hilb}$
for all $n$ was posed in \cite{P,pt}.
We will see that the genus 1 Gromov-Witten theory of $\hilbnc$ provides
new connections: the theory is fundamentally linked to the geometry of
the moduli space of abelian varieties. 

There are several natural open questions. What is the structure of the genus 1 Gromov-Witten theory of more general Nakajima quiver varieties?
Is there an exact calculation of the full genus Gromov-Witten  theory?

\subsection{Gromov-Witten theory of families of local elliptic curves}
Let $\overline{\mathcal{M}}_{1,r}$ be the moduli space of Deligne-Mumford stable
curves of genus $1$ with $r$ markings.{\footnote{We will always assume 
$r>0$ for stability.}}
Let
$$ \pi:\mathcal{E} \rightarrow \overline{\mathcal{M}}_{1,r}\, $$
be the universal elliptic curve with sections
$$\mathsf{p}_1, \ldots, \mathsf{p}_r : \overline{\mathcal{M}}_{1,r}\rightarrow
\mathcal{E}$$
associated to the markings.
Let 
\begin{equation*}
\pi_{\mathbb{C}^2}: \mathcal{E} \times \mathbb{C}^2  \rightarrow \overline{\mathcal{M}}_{1,r}
\end{equation*}
be the universal {\em local curve} over $\overline{\mathcal{M}}_{1,r}$. The torus $\T=(\CC^*)^2$
acts on the $\com^2$ factor as before.

Let $\mu^1,\ldots,\mu^r \in \mathrm{Part}(n)$, and let
 $\Mbar^\bullet_g(\pi_{\CC^2},\mu^1,\ldots,\mu^r)$ be the moduli space
of stable{\footnote{The superscript $\bullet$ indicates possibly {\em disconnected} domain curves (but no connected component of the domain is contracted to a point). We follow the conventions of
\cite{bp}.}}
relative maps to the fibers of $\pi_{\CC^2}$,
$$\epsilon: \Mbar^\bullet_g(\pi_{\CC^2},\mu^1,\ldots, \mu^r) \rightarrow \Mbar_{1,r}\, .$$
The fiber of $\epsilon$ over 
the moduli point
$$(E,p_1,\ldots,p_r) \in \Mbar_{1,r}\, $$
is the moduli space of stable maps of genus $g$
to $E \times \com^2 $ relative to the divisors
determined by the nodes and 
the markings of $E$
with boundary{\footnote{The boundary conditions are unordered, and the cohomology weights of the boundary conditions
are all the identity class.}}
condition
$\mu^i$ over 
the divisor $p_i \times \com^2 $.  Since the degree $n$ is recorded in
the size of the partitions $\mu^i$, we omit $n$ from the notation for
$\Mbar^\bullet_g(\pi_{\CC^2},\mu^1,\ldots, \mu^r)$.

The moduli space $\Mbar^\bullet_g(\pi_{\CC^2},\mu^1,\ldots,\mu^r)$ has $\pi_{\CC^2}$-relative
virtual dimension
$$-nr+\sum_{i=1}^r \ell(\mu^i)\, .$$
The Gromov-Witten series of the family $\pi_{\CC^2}$ is defined by 
$$
\blang \mu^1, \mu^2, \ldots, \mu^r \brang^{\pi_{\CC^2},\bullet}
=\sum_{b=0}^\infty
{u^{b}}\int_{\overline{\mathcal{M}}_{1,r}} \epsilon_*\left(
\Big[\Mbar^\bullet_{g[b]}\big(\pi_{\CC^2}, \mu^1,\ldots ,\mu^r
\big)\Big]^{vir_{\pi_{\mathbb{C}^2}}}\right). $$
Here, the summation index $b$ is the branch point number, so
$$2g[b]-2= b+nr- \sum_{i=1}^r \ell(\mu^i)\, .$$

The moduli space of stable maps 
$\Mbar^\bullet_{g[b]}\big(\pi_{\CC^2}, \mu^1,\ldots ,\mu^r
\big)$
is
empty unless $g[b]$ is an integer.
The virtual class 
$\Big[\Mbar^\bullet_{g[b]}\big(\pi_{\CC^2}, \mu^1,\ldots ,\mu^r
\big)\Big]^{vir_{\pi_{\mathbb{C}^2}}}$
is the $\pi_{\CC^2}$-relative 
$\T$-equivariant
virtual class of the family of relative stable maps to the
fibers of $\pi_{\CC^2}$.
We define
\begin{eqnarray*}
\blang \mu^1, \mu^2, \ldots, \mu^r \brang_{g}^{\pi_{\CC^2},\bullet}
&= &  \text{Coeff}_{u^{2g-2}} \Big[ \blang \mu^1, \mu^2, \ldots, \mu^r \brang^{\pi_{\CC^2},\bullet}
\Big] \\
&= &
\int_{\overline{\mathcal{M}}_{1,r}} \epsilon_*\left(
\Big[\Mbar^\bullet_{g}\big(\pi_{\CC^2}, \mu^1,\ldots ,\mu^r
\big)\Big]^{vir_{\pi_{\mathbb{C}^2}}}\right). 
\end{eqnarray*}
To emphasize the degree $n$, we will sometimes use the notation
$$
\blang \mu^1, \mu^2, \ldots, \mu^r \brang_{g,n}^{\pi_{\CC^2},\bullet} =
\blang \mu^1, \mu^2, \ldots, \mu^r \brang_{g}^{\pi_{\CC^2},\bullet} \, .$$


The Gromov-Witten series of $\Hilb$ and the Gromov-Witten series of the
family $\pi_{\CC^2}$ are related by the following result of \cite{pt}.

\vspace{8pt}
\noindent {\bf{Theorem A}} (Pandharipande-Tseng).
    {\em For all 
$\mu^1,\mu^2,\ldots,\mu^r\in \text{\em Part}(n)$, we have  
$$\blang \mu^1, \mu^2, \ldots, \mu^r \brang_{1}^{\Hilb} =
(-i)^{\sum_{i=1}^r \ell(\mu^i)-|\mu^i|}\blang \mu^1, \mu^2, \ldots, \mu^r \brang^{\pi_{\CC^2},\bullet}$$
 after the variable change $-q=e^{iu}$.}
\vspace{8pt}

The calculation of the Gromov-Witten invariants of $\Hilb$  in genus 1 is therefore
{\em equivalent} to the calculation of the Gromov-Witten invariants of the
family $\pi_{\mathbb{C}^2}$ of local elliptic curves. 
For example,
\begin{equation}\label{eqn:hilb_curve_g1}
\blang D \brang_{1}^{\hilbnc} = -(-i)^{-1} \blang (2,1^{n-2})\brang^{\pi_{\CC^2},\bullet}\, .
\end{equation}


\subsection{Moduli of abelian varieties}

Let $\mathcal{A}_g$ be the
moduli space  of principally polarized abelian varieties $(X,\theta)$ of dimension $g$. 
The space $\mathcal A_g$ is a nonsingular Deligne-Mumford stack of dimension $\binom{g+1}{2}$.
Let 
$$
\nu:\X_g\rightarrow \mathcal{A}_g
$$
be the universal family of principally polarized abelian varieties.
We refer the reader to \cite{BL} for the 
foundations of the study of the moduli of
abelian varieties.

A general abelian variety $(X,\theta)$ parameterized by $\mathcal{A}_{g}$ has Picard number $1$.
The \emph{Noether-Lefschetz locus} of $\mathcal{A}_g$ parameterizes abelian varieties
with Picard number at least 2. The simplest irreducible components of the
Noether-Lefschetz locus of $\mathcal{A}_g$ are related to the geometry of
elliptic curves on abelian varieties. For $g\geq 1$ and $n\geq1 $, let 
$$
\NL_{g,n} = \left \lbrace (X, \theta) \in \mathcal{A}_g \left| \begin{array}{c}
    X\text{ contains a subgroup }E\subset X \\
    \text{ which is an elliptic curve of degree } \theta\cdot[E]=n  
\end{array}\right.\right\rbrace.
$$
Let $[{\NL}_{g,n}]\in \mathsf{CH}^{g-1}(\mathcal{A}_g)$ be the associated 
cycle class.{\footnote{ The $g=1$ case is degenerate: $[\NL_{1,n}]=[\mathcal{A}_1]$ by definition.
All Chow groups are taken with $\mathbb{Q}$-coefficients.}} The following linear combination of components
plays a geometrically important
role{\footnote{As usual, $\sigma_r(n)= \sum_{d|n} d^r$.}}
$$
[\widetilde{\NL}_{g,n}] = \sum_{n'|n} \sigma_1\left(\frac{n}{n'}\right)[\NL_{g,n'}]
\ \in \ \mathsf{CH}^{g-1}(\mathcal{A}_g)
\, ,
$$
see \cite{GreerLian}.

 The Hodge bundle is the rank $g$ vector bundle
\[
\mathbb{E}_g=\nu_{*} (\Omega_{\nu}) . 
\]
The Chern classes 
$\lambda_i= c_i(\mathbb{E}_g)\in \mathsf{CH}^i(\mathcal{A}_g)$
generate 
the {\em tautological ring} \cite{vdG}.
$$\mathsf{R}^*(\mathcal{A}_g) \subset \mathsf{CH}^*(\mathcal{A}_g)\,.$$
A canonical $\mathbb{Q}$-linear projection operator
$$\mathsf{taut}: \mathsf{CH}^*(\mathcal{A}_g) \rightarrow  \mathsf{R}^*(\mathcal{A}_g)
$$
has been constructed in \cite{cmop}.

The following result of \cite{il} determines the projections
of the simplest components of the Noether-Lefschetz loci.

\vspace{8pt}
\noindent {\bf{Theorem B}} (Iribar-L\'opez). 
{\em For $g\geq 2$ and $n\geq 1$,
we have 
$$\mathsf{taut}([\NL_{g,n}]) = \frac{n^{2g-1}g}{6|B_{2g}|}
\prod_{p|n} (1-p^{2-2g}) \lambda_{g-1}$$
or, equivalently{\footnote{Equation \eqref{aitorproj} is correct also for $g=1$ with the convention $[\NL_{1,n}]=[\mathcal{A}_1]$.}}, 
\begin{equation}\label{aitorproj}
\mathsf{taut}\left( \frac{(-1)^g}{24}\lambda_{g-1}+  \sum_{n = 1}^\infty[\widetilde{\NL}_{g,n}]\, Q^n\right) = \frac{(-1)^g}{24}E_{2g}(Q)\lambda_{g-1},
\end{equation}
where $E_{2g}(Q)$ is the Eisenstein modular function of weight $2g$ in
the variable $Q= e^{2\pi i \tau}$.}
\vspace{8pt}

A basic open question regarding the structure of the projection operator is
whether $\mathsf{taut}$ is a homomorphism of $\mathbb{Q}$-algebras:
$$\mathsf{taut}(\gamma) \cdot \mathsf{taut}(\widehat{\gamma}) \stackrel{?}{=} \mathsf{taut}(\gamma \cdot \widehat{\gamma}) \ \in \ \mathsf{R}^*(\mathcal{A}_g)
$$
for all $\gamma, \widehat{\gamma} \in \mathsf{CH}^*(\mathcal{A}_g)$.

\subsection{A triple equivalence}

\subsubsection{Hilbert schemes of points}
\label{azazaz}
Let $\mathsf{Tr}_n$ be the normalized trace of
the operator $\mathsf{M}_D^{\mathsf{Hilb}^n(\mathbb{C}^2)}$ of quantum multiplication by the divisor $D$ on 
$QH^*_\T(\Hilb)$,
$$\mathsf{Tr}_n = \frac{1}{t_1+t_2} \mathsf{trace} (\mathsf{M}_D^{\mathsf{Hilb}^n(\mathbb{C}^2)})
\, .$$
Using the formula of \cite{op} for 
$\mathsf{M}_D^{\mathsf{Hilb}^n(\mathbb{C}^2)}$,
we obtain
\begin{equation}\label{eqn:TR_n}
\mathsf{Tr}_n=\sum_{\mu\in \mathsf{Part}(n)}\sum_i\left(\frac{\mu_i^2}{2}\frac{(-q)^{\mu_i}+1}{(-q)^{\mu_i}-1}-\frac{\mu_i}{2}\frac{(-q)+1}{(-q)-1}\right)\in \mathbb{Q}(q),
\end{equation}
where the first summation 
is over the set $\mathsf{Part}(n)$
of partitions of $n$ and
the second summation is over all parts $\mu_i$ of the partition $\mu \in \mathsf{Part}(n)$.

The first of three equivalent statements is the following
calculation of the basic genus 1 Gromov-Witten invariant
of the Hilbert scheme of points.

\begin{thm} For $n\geq 1$, we have
\label{Equiv1}
\begin{equation*}
\big\langle \, D\,  \big\rangle_{1}^{\mathsf{Hilb}^n(\mathbb{C}^2)} = 
-\frac{1}{24} \frac{(t_1+t_2)^2}{t_1t_2}
\left( \mathsf{Tr}_n + \sum_{k=2}^{n-1} {\sigma_{-1}(n-k)} \mathsf{Tr}_k \right) \, .
\end{equation*}
\end{thm}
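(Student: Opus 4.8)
The plan is to prove Theorem \ref{Equiv1} by combining Theorem A with an explicit analysis of the families local curve theory $\blang (2,1^{n-2})\brang^{\pi_{\CC^2},\bullet}$, and then matching the result to the trace formula \eqref{eqn:TR_n}. By Theorem A and \eqref{eqn:hilb_curve_g1}, computing $\blang D\brang_1^{\Hilb}$ is the same as computing the genus-$1$ coefficient of the $\T$-equivariant relative Gromov-Witten series of the universal local curve $\mathcal{E}\times\CC^2 \to \Mbar_{1,1}$, with a single boundary condition $(2,1^{n-2})$ over the marked point. First I would set up the degeneration/localization framework for this family: push forward the relative virtual class along $\epsilon:\Mbar^\bullet_\bullet(\pi_{\CC^2},(2,1^{n-2}))\to\Mbar_{1,1}$, and reduce the integral over $\Mbar_{1,1}$ to a computation on a fixed elliptic curve $E$, using the TQFT structure of local-curve invariants (in the sense of \cite{bp}). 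The TQFT assembles the genus-$1$ invariant from the genus-$0$ pair-of-pants (three-point) amplitudes and the (degenerate) genus-$0$, one boundary ``cap'' and two-boundary ``tube'' amplitudes, together with a gluing trace over the Nakajima/Fock basis indexed by $\mathsf{Part}(n)$.

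The key steps, in order. Step 1: Express $\blang (2,1^{n-2})\brang^{\pi_{\CC^2},\bullet}$ via the local GW TQFT of $\CC^2$ on a genus-$1$ curve with one puncture, i.e. as a weighted trace of the genus-$0$ gluing operators against the insertion $(2,1^{n-2})$. Step 2: Identify the relevant operators with the $q$-deformed objects from \cite{op, bp}: the genus-$0$ two-point ``propagator'' and the three-point vertex are exactly the pieces that enter the operator $\mathsf{M}_D^{\Hilb}$ of quantum multiplication by $D$, so the genus-$1$ amplitude becomes a trace of a product of such operators. Step 3: Take the trace in the Nakajima basis; because $D$ acts ``diagonally on parts'' in the relevant normalization, the trace factors over the parts $\mu_i$ of partitions $\mu\in\mathsf{Part}(n)$, producing the single-part building block $\frac{\mu_i^2}{2}\frac{(-q)^{\mu_i}+1}{(-q)^{\mu_i}-1}-\frac{\mu_i}{2}\frac{(-q)+1}{(-q)-1}$ appearing in $\mathsf{Tr}_n$. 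Step 4: Account for the disconnected ($\bullet$) contributions and the precise insertion $(2,1^{n-2})$ rather than the full identity: the disconnected pieces of degree-$(n-k)$ split off as ``elliptic curve times trivial'' factors, each contributing a divisor-sum weight $\sigma_{-1}(n-k)$, which yields the correction term $\sum_{k=2}^{n-1}\sigma_{-1}(n-k)\,\mathsf{Tr}_k$. Step 5: Restore the prefactor $-\frac{1}{24}\frac{(t_1+t_2)^2}{t_1t_2}$ from the $\Mbar_{1,1}$ integral (the $\lambda_1$/Hodge integral over $\Mbar_{1,1}$ contributes the $\frac{1}{24}$, and the equivariant Euler class of $\CC^2$ contributes $\frac{(t_1+t_2)^2}{t_1t_2}$ after the relevant normalization), together with the factor $-(-i)^{-1}$ and the variable change $-q=e^{iu}$ from Theorem A, and check that the genus-$1$ ($u^0$) coefficient lands on the stated formula.

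The main obstacle I expect is Step 4: bookkeeping the disconnected contributions and converting the insertion $(2,1^{n-2})$ (equivalently the divisor $D$, via $D=-\lv 2,1^{n-2}\rang$) into the clean combination $\mathsf{Tr}_n+\sum_{k}\sigma_{-1}(n-k)\mathsf{Tr}_k$. Concretely, one must show that inserting $(2,1^{n-2})$ at the marked point of the genus-$1$ local curve differs from inserting the identity-weighted full boundary by precisely the terms that reorganize into traces at lower degree $k$ with arithmetic weights $\sigma_{-1}(n-k)$; this requires a careful identity relating the action of $D$ in the Fock space to the generating function $\sum_k \sigma_{-1}(n-k)q^{?}$, presumably via the standard fact that the disconnected local-curve partition function of $\CC^2$ over an elliptic curve in the ``empty'' boundary sector is governed by $\prod(1-q^m)^{-?}$, whose logarithmic derivative produces $\sigma_{-1}$. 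A secondary obstacle is making the normalization of equivariant parameters and the $(-i)$-powers in Theorem A consistent so that all signs and the $t_1,t_2$-dependence match exactly; this is routine but error-prone. Everything else—the TQFT reduction, the diagonalization over parts, and the $\Mbar_{1,1}$ integral—follows from results already available in \cite{op, bp, pt}.
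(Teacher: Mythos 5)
Your reduction via Theorem A to the families local-curve series $\blang (2,1^{n-2})\brang^{\pi_{\CC^2},\bullet}$, and your identification of the fixed-target disconnected invariants $\la (2,1^{n-2})\ra^{E\times\CC^2,\bullet}_{g,n}$ with $\mathsf{trace}(\mathsf{M}_D^{\hilbnc})$ via the local GW/Hilb correspondence of \cite{bp,op}, both match the paper. But there is a genuine gap at the center of the argument: the step ``reduce the integral over $\Mbar_{1,1}$ to a computation on a fixed elliptic curve $E$, using the TQFT structure of local-curve invariants.'' The TQFT of \cite{bp} is a statement about a \emph{fixed} target curve; it does not compute the pushforward of the $\pi_{\CC^2}$-relative virtual class along $\epsilon$ to $\Mbar_{1,1}$. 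The paper is explicit that this is the main difficulty, and it is not bookkeeping. What actually happens is: $(\CC^*)^2$-localization expresses the disconnected families invariant as a sum over fixed loci $\Mbar^{\mathsf{fiber}}_{m,\mathbf{k},\mathbf{g}}$, the inverse Euler class of the virtual normal bundle contributes the Hodge combinations $N_{g_i}$ (whence $\lambda_g\lambda_{g-2}$ and $\lambda_{g_j}\lambda_{g_j-1}$ insertions), and the K\"unneth decomposition of the diagonal of $\Mbar_{1,1}$ forces all but \emph{one} connected factor to sit over a fixed elliptic curve. The one remaining factor is a genuinely new \emph{families} Hodge integral, and evaluating it requires the paper's Theorem \ref{Equiv2} (proved by transporting the problem to an elliptically fibered $K3$ and factoring $\lambda_g = 2\,\epsilon_S^*(\mathsf{pt})\cdot\lambda_{g-1}$ through the injection $\epsilon_S^*(\mathsf{Tan}_{\mathbb{P}^1})\hookrightarrow\mathbb{E}_g$, valid only for $n>0$), together with the descendent/relative correspondence of Proposition \ref{prop:degeneration_form}. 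None of this is supplied or replaced by your TQFT reduction, and the naive ``$\Mbar_{1,1}$ contributes $-\tfrac{1}{24}$ via $\lambda_1$'' is exactly the assertion that fails without it.

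A secondary inaccuracy: the weight $\sigma_{-1}(n-k)$ does not arise as a single ``divisor-sum weight per split-off factor.'' In the paper the two types of localization contributions are weighted by $\mathsf{Part}(n-m)$ and by $\widetilde{\mathsf{Part}}(n-m)$ (covers counted with an extra factor of the number of connected components, because the distinguished ``moving'' factor can be any one of the trivial $(1^{k_j})$ covers); the identity $\widetilde{\mathcal{P}}=\mathcal{P}\log\mathcal{P}$ and $\log\mathcal{P}(Q)=\sum_k\sigma_{-1}(k)Q^k$ then produce the combination $\mathcal{P}(Q)(1+\log\mathcal{P}(Q))$, whose quotient by the $\mathcal{P}(Q)$ already present in the disconnected trace yields $1+\sum_k\sigma_{-1}(k)Q^k$ and hence the stated correction term. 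Your proposal conflates the quantum parameter $q$ with the degree variable $Q$ here and does not account for the two distinct contribution types, so even granting the families evaluations, the combinatorial assembly in your Step 4 would not close.
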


The evaluations for $n=1,2$ agree
with the known results:
$$
\big\langle \, D\,  \big\rangle_{1}^{\mathsf{Hilb}^1(\mathbb{C}^2)} = 
-\frac{1}{24} \frac{(t_1+t_2)^2}{t_1t_2}
\cdot \mathsf{Tr}_1 = 0\, ,$$ 
$$\big\langle \, D\,  \big\rangle_{1}^{\mathsf{Hilb}^2(\mathbb{C}^2)} =
-\frac{1}{24}\frac{(t_1+t_2)^2}{t_1t_2} \cdot \mathsf{Tr}_2 =
-\frac{1}{24}\frac{(t_1+t_2)^2}{t_1t_2} \cdot \frac{q+1}{q-1}\,.$$
For $n=3,4,5$, we obtain
\begin{eqnarray*}
\big\langle \, D\,  \big\rangle_{1}^{\mathsf{Hilb}^3(\mathbb{C}^2)}
&=&-\frac{1}{24}\cdot \frac{(t_1+t_2)^2}{t_1t_2}\cdot(\mathsf{Tr}_3+\mathsf{Tr}_2)\\  & = &   -\frac{1}{24}\cdot \frac{(t_1+t_2)^2}{t_1t_2} \cdot \left(\frac{5q^3-3q^2-3q+5}{(q-1)(q^2-q+1)} \right) \\
& = & 
-\frac{1}{24}\cdot\frac{(t_1+t_2)^2}{t_1t_2} \cdot \left(-5-7 q-q^{2}+2 q^{3}-q^{4}-7 q^{5}-10 q^{6}-7 q^{7}+...\right),
\end{eqnarray*}

\begin{eqnarray*}
\big\langle \, D\,  \big\rangle_{1}^{\mathsf{Hilb}^4(\mathbb{C}^2)}
&=&-\frac{1}{24}\cdot \frac{(t_1+t_2)^2}{t_1t_2}\cdot
\left(\mathsf{Tr}_4+\mathsf{Tr}_3+\frac{3}{2}\mathsf{Tr}_2\right) \\  & = &   -\frac{1}{24}\cdot \frac{(t_1+t_2)^2}{t_1t_2} \cdot 
{{\left(\frac{35q^5-28q^4+23q^3+23q^2-28q+35}{2(q-1)(q^2+1)(q^2-q+1)}\right)}}
\\
& = & -\frac{1}{24}\cdot\frac{(t_1+t_2)^2}{t_1t_2} \cdot\left(-\frac{35}{2}-21 q-q^{2}-3 q^{3}-17 q^{4}-21 q^{5}-19 q^{6}-21 q^{7}+...\right)\, ,
\end{eqnarray*}

{\tiny
\begin{eqnarray*}
\big\langle \, D\,  \big\rangle_{1}^{\mathsf{Hilb}^5(\mathbb{C}^2)}
&=&-\frac{1}{24}\cdot \frac{(t_1+t_2)^2}{t_1t_2}\cdot
\left(\mathsf{Tr}_5+\mathsf{Tr}_4+\frac{3}{2}\mathsf{Tr}_3+\frac{4}{3}\mathsf{Tr}_2\right) \\  & = &   -\frac{1}{24}\cdot \frac{(t_1+t_2)^2}{t_1t_2} \cdot 
{{\left(-\frac{272q^9-539q^8+760q^7-629q^6+302q^5+302q^4-629q^3+760q^2-539q+272}{6(q-1)(q^2+1)(q^2-q+1)(q^4-q^3+q^2-q+1)}\right)}}
\\
& = &-\frac{1}{24}\cdot \frac{(t_1+t_2)^2}{t_1t_2}\cdot
\left(\frac{136}{3}+\frac{277}{6} q-\frac{41}{6} q^{2}+\frac{17}{3} q^{3}+\frac{151}{6} q^{4}+\frac{127}{6} q^{5}+\frac{101}{3} q^{6}+\frac{277}{6} q^{7}+... \right).
\end{eqnarray*}
}

\subsubsection{Families of local elliptic curves}
It is natural to look for an analogue of Theorem \ref{Equiv1}
for the Gromov-Witten theory of families
of local elliptic curves using the  correspondence of Theorem A. 
The best statement
is expressed in terms of the {\em descendent} Gromov-Witten theory of
the family
$$\pi: \mathcal{E} \rightarrow 
\overline{\mathcal{M}}_{1,1}\, $$
for stable maps with {\em connected} domains.

Let
 $\Mbar^\circ_{g,1}(\pi,n)$ be the moduli space
of stable{\footnote{The superscript $\circ$ indicates {\em connected} domain curves.}}
relative maps to the fibers of $\pi$,
$$\epsilon: \Mbar^\circ_{g,1}(\pi,n) \rightarrow \Mbar_{1,1}\, .$$
The fiber of $\epsilon$ over 
the moduli point
$$(E,p_1) \in \Mbar_{1,1}\, $$
is the moduli space of stable maps of
1-pointed connected genus $g$ curves
to $E$ of degree $n$ 
relative to the divisors
determined by the nodes of $C$.
The moduli space $\Mbar^\circ_{g,1}(\pi,n)$ has 
$\pi$-relative
virtual dimension $2g-1$ and total virtual dimension $2g$.

For $g\geq 2$, let
$$
\blang \tau_1(\mathsf{p}_1) \lambda_g \lambda_{g-2}  \brang_{g,n}^{\pi,\circ}
= \int_{[\Mbar^\circ_{g,1}(\pi,n)]^{vir_\pi}}
\tau_1(\mathsf{p_1}) \lambda_g \lambda_{g-2} \, \in \, \mathbb{Q}\, , $$
where $\mathsf{p}_1$ is viewed as a divisor class on $\mathcal{E}$ and
the Hodge classes $\lambda_g\lambda_{g-2}$ are pulled-back from the
moduli of the domain curves.{\footnote{The $\lambda_g$
class plays an important role in the theory Hodge integrals, see
\cite{FabPan,GetP,MPS}.}}

\begin{thm}\label{Equiv2} \, For $g\geq 2$, we have
\begin{equation*}
\sum_{n=0}^\infty 
\blang \tau_1(\mathsf{p}_1) \lambda_g \lambda_{g-2}  \brang_{g,n}^{\pi,\circ} Q^n
 = 
 \frac{(-1)^g}{24}\frac{|B_{2g}|}{4g}\frac{|B_{2g-2}|}{(2g-2)! } E_{2g}(Q)\, .
\end{equation*}
\end{thm}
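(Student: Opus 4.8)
The plan is to deduce the theorem from Theorem B by establishing a Noether--Lefschetz/Gromov--Witten correspondence for the family $\pi\colon\mathcal E\to\overline{\mathcal M}_{1,1}$, analogous to the correspondence for families of K3 surfaces. The geometric starting point is that a connected degree $n$ relative stable map $f\colon C\to E$ parametrized by $\Mbar^\circ_{g,1}(\pi,n)$ determines a point of $\mathcal A_g$ by passing to the Jacobian of the stabilized domain; when $C$ is smooth, $f$ realizes $E$, up to isogeny, as an elliptic subgroup of $\mathrm{Jac}(C)$ with $\theta\cdot[E]=n$, so the classifying morphism $\Mbar^\circ_{g,1}(\pi,n)\to\mathcal A_g$ lands in the Noether--Lefschetz locus, and the isogeny ambiguity is exactly what passes from $[\NL_{g,n}]$ to the twisted class $[\widetilde{\NL}_{g,n}]$ of \eqref{aitorproj} (cf.\ \cite{GreerLian}). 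Under this morphism the Hodge classes $\lambda_g,\lambda_{g-2}$ pull back from $\mathcal A_g$, and $\lambda_g=c_g(\mathbb E_g)$ acts as a cutoff: it vanishes on the non-separating boundary divisor of $\overline{\mathcal M}_g$, while on separating-node strata it splits, so that all domain degenerations can be organized in terms of the smooth-domain contribution using Mumford's relation $c(\mathbb E)c(\mathbb E^{\vee})=1$.

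The target identity, for $n\ge 1$, has the shape
\begin{equation*}
\blang\tau_1(\mathsf p_1)\lambda_g\lambda_{g-2}\brang^{\pi,\circ}_{g,n}=C_g\cdot\int_{\mathcal A_g}[\widetilde{\NL}_{g,n}]\cdot\gamma_g,
\end{equation*}
where $\gamma_g\in\mathsf R^{*}(\mathcal A_g)$ is an explicit tautological class of complementary dimension assembled from $\lambda_{g-2}$, the descendent insertion $\tau_1(\mathsf p_1)$, and the pushforward along $\epsilon$ to $\overline{\mathcal M}_{1,1}$, and $C_g$ is a universal constant recording cover automorphisms, ramification, and the genus $1$ factor $\int_{\overline{\mathcal M}_{1,1}}\psi_1=\tfrac1{24}$ arising from the marked-point section $\mathsf p_1$. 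Granting this: since $\gamma_g$ is tautological and the projection $\mathsf{taut}$ of \cite{cmop} is compatible with the intersection pairing on $\mathsf R^{*}(\mathcal A_g)$, one may replace $[\widetilde{\NL}_{g,n}]$ by $\mathsf{taut}([\widetilde{\NL}_{g,n}])$, and summing over $n$ with \eqref{aitorproj} converts the series into $C_g\cdot\tfrac{(-1)^g}{24}\bigl(E_{2g}(Q)-1\bigr)\int_{\mathcal A_g}\lambda_{g-1}\gamma_g$. The number $\int_{\mathcal A_g}\lambda_{g-1}\gamma_g$, pulled back via Torelli, is a top-degree Hodge integral on $\overline{\mathcal M}_g$ of $\lambda_g\lambda_{g-1}\lambda_{g-2}$-type, which I would evaluate with the Faber-type $\lambda$-class formulas (as in \cite{FabPan}); its value is built from $|B_{2g}|/2g$ and $|B_{2g-2}|/(2g-2)!$, so that, together with $C_g$ and the Bernoulli normalization of $E_{2g}$ in Theorem B, one recovers the prefactor $\tfrac{(-1)^g}{24}\tfrac{|B_{2g}|}{4g}\tfrac{|B_{2g-2}|}{(2g-2)!}$. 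The constant term is verified separately: $\blang\tau_1(\mathsf p_1)\lambda_g\lambda_{g-2}\brang^{\pi,\circ}_{g,0}$ is an honest Hodge integral over constant maps which, by the same $\lambda$-class formulas, equals the same prefactor, consistent with $E_{2g}(0)=1$.

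The main obstacle is the first step, making the correspondence rigorous at the level of virtual classes: one must (i) control every boundary stratum of $\Mbar^\circ_{g,1}(\pi,n)$, showing the $\lambda_g$ cutoff kills non-compact-type degenerations and bookkeeping the compact-type and contracted-component contributions via the splitting formulas for $\lambda_g$; (ii) identify the surviving locus with the Hurwitz locus of degree $n$ elliptic covers inside $\mathcal M_g$ and compute the excess/obstruction and automorphism factors producing $C_g$ and the $\sigma_1$-convolution in $[\widetilde{\NL}_{g,n}]$; and (iii) match the descendent $\tau_1(\mathsf p_1)$ and the relative integration over $\overline{\mathcal M}_{1,1}$ with the tautological class $\gamma_g$ on $\mathcal A_g$. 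With the correspondence in hand, the modularity of the answer --- in particular the vanishing of its cusp-form component when $2g\ge 12$ --- is not proved on the Gromov--Witten side but inherited from Theorem B.
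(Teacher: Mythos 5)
Your proposal has a genuine gap, and it is a circularity rather than a technical omission. The decisive step is the replacement of $[\widetilde{\NL}_{g,n}]$ by $\mathsf{taut}([\widetilde{\NL}_{g,n}])$ inside the intersection number $\int_{\mathcal{A}_g}[\widetilde{\NL}_{g,n}]\cdot\gamma_g$. For this to be licensed by the defining property of the projection of \cite{cmop}, the complementary class $\gamma_g$ would have to be tautological on $\mathcal{A}_g$. But the class you actually produce by pushing the decorated virtual class of $\Mbar^\circ_{g,1}(\pi,n)$ to $\mathcal{A}_g$ via Jacobians is supported on the Torelli image: it is a Hodge-decorated version of $\mathsf{Tor}_*[\mathcal{M}_g^{\mathsf{ct}}]$, which is precisely the kind of class shown to be non-tautological in \cite{COP}. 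Replacing the Noether--Lefschetz class by its projection when pairing against such a class is exactly the multiplicativity statement of Theorem \ref{Equiv3} --- which the paper identifies as an instance of the \emph{open} homomorphism question for $\mathsf{taut}$ and which it \emph{deduces from} Theorem \ref{Equiv2} (via the equivalence established in \cite{il}), not the other way around. So your argument assumes what is to be proven; Theorem B by itself only controls $\mathsf{taut}([\NL_{g,n}])$ and cannot produce the intersection number against a non-tautological class. The remaining items you flag as "obstacles" (controlling boundary strata, identifying the Hurwitz locus, matching $\tau_1(\mathsf{p}_1)$ with a class on $\mathcal{A}_g$) are indeed the content of the Theorem \ref{Equiv2} $\Leftrightarrow$ Theorem \ref{Equiv3} equivalence, but resolving them still leaves you needing Theorem \ref{Equiv3}, not Theorem B.

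For comparison, the paper's proof stays entirely on the Gromov--Witten side and never touches $\mathcal{A}_g$. After the $n=0$ and $g=1$ cases (your constant-term check agrees in spirit with the paper's, via the obstruction bundle $\mathbb{E}_g^\vee\otimes\mathbb{T}$ and \cite{fphodge}), the key move for $g\geq 2$, $n>0$ is to replace $\pi:\mathcal{E}\to\overline{\mathcal{M}}_{1,1}$ by an elliptically fibered $K3$ surface $\pi_S:S\to\mathbb{P}^1$ with section (picking up a factor $1/48$), and then to prove the vanishing of Proposition \ref{prop:virtual_class_comparison} via the injection $\epsilon_S^*(\mathsf{Tan}_{\mathbb{P}^1})\hookrightarrow\mathbb{E}_g$, which factors $\lambda_g$ as $2\,\epsilon_S^*(\mathsf{pt})\cdot\lambda_{g-1}$ on the moduli space. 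This trades the families integral for the fixed-target invariant $\langle\tau_1(p_1)\lambda_{g-1}\lambda_{g-2}\rangle^{E,\circ}_{g,n}=|B_{2g-2}|\,\sigma_{2g-1}(n)/(2g-2)!$, evaluated by \cite{OP23,Pix}; the Eisenstein series then emerges from $\sigma_{2g-1}(n)$ on the Gromov--Witten side rather than being inherited from Theorem B.
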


The $g=1$ case is degenerate. The corresponding evaluation is
\begin{equation}\label{ggg111}
\sum_{n=0}^\infty 
\blang \tau_1(\mathsf{p}_1) \brang_{1,n}^{\pi,\circ} Q^n
 = 
 \frac{(-1)}{24}\frac{|B_{2}|}{4}\frac{|B_{0}|}{0! } E_{2}(Q) =
 -\frac{1}{576} E_2(Q)
  \, .
\end{equation}
Theorem A together with a families relative/descendent correspondence
will be used to show that Theorem \ref{Equiv1} and \ref{Equiv2} are equivalent.

\subsubsection{The homomorphism question}
The third equivalence involves the homomorphism question  for 
the
projection operator
$$\mathsf{taut}: \mathsf{CH}^*(\mathcal{A}_g) \rightarrow  \mathsf{R}^*(\mathcal{A}_g)\, .
$$
Let $\mathcal{M}_g^{\mathsf{ct}}$ be the $3g-3$ dimensional moduli space of genus $g$ curves of compact type, and
let
 $$\mathsf{Tor}: \mathcal{M}_g^{\mathsf{ct}} \rightarrow \mathcal{A}_g$$
 be the Torelli map. Since $\mathsf{Tor}$ is proper, we can push forward
 the fundamental class{\footnote{The $g=1$ case is
 degenerate: $\mathsf{Tor}_*[\mathcal{M}_1^{\mathsf{ct}}]=[\mathcal{A}_1]$ by definition.}}:
 $$\mathsf{Tor}_*[\mathcal{M}_g^{\mathsf{ct}}] \in \mathsf{CH}^{{\binom{g+1}{2}}-3g+3}(\mathcal{A}_g)\, .$$
The intersection theory of the Torelli cycle has been studied in \cite{COP,D,il}.

\begin{thm} For $g\geq 1$ and $n\geq 1$, we have
\label{Equiv3}
\begin{equation*}
\mathsf{taut}(\mathsf{Tor}_*[\mathcal{M}_g^{\mathsf{ct}}] ) \cdot \mathsf{taut}([\NL_{g,n}]) \, =\,  
\mathsf{taut}(\mathsf{Tor}_*[\mathcal{M}_g^{\mathsf{ct}}] \cdot 
[\NL_{g,n}]) \ \in \ \mathsf{R}^*(\mathcal{A}_g)\, .
\end{equation*}
\end{thm}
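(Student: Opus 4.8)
The plan is to deduce Theorem \ref{Equiv3} from the equivalence of Theorems \ref{Equiv1} and \ref{Equiv2} together with Theorem B, by translating all three sides into statements about modular forms in the variable $Q$ and using that $\mathsf{R}^*(\mathcal{A}_g)$ is one-dimensional in the relevant degrees (it is generated over $\mathbb{Q}$ by $\lambda_{g-1}$ in codimension $g-1$, and the products appearing here all land in degrees where $\mathsf{R}^*$ is at most one-dimensional). The first step is to compute $\mathsf{taut}(\mathsf{Tor}_*[\mathcal{M}_g^{\mathsf{ct}}])$ explicitly: this is a tautological class of codimension $\binom{g+1}{2}-3g+3$, and by the known intersection theory of the Torelli cycle (\cite{COP,D,il}) it is a known multiple of an explicit monomial in the $\lambda_i$. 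Then $\mathsf{taut}(\mathsf{Tor}_*[\mathcal{M}_g^{\mathsf{ct}}])\cdot \mathsf{taut}([\NL_{g,n}])$ is, by Theorem B, a fully explicit rational multiple of that monomial times $\lambda_{g-1}$, with the $Q$-dependence assembled (after summing against $\sigma_1$-weights to pass to $\widetilde{\NL}$) into $E_{2g}(Q)$.

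The heart of the argument is the other side: identifying $\mathsf{taut}(\mathsf{Tor}_*[\mathcal{M}_g^{\mathsf{ct}}]\cdot [\NL_{g,n}])$ with a genus $g$ descendent integral over $\Mbar^\circ_{g,1}(\pi,n)$. Here I would use that the class $[\NL_{g,n}]$ restricted to the image of the Torelli map is represented, up to the $\sigma_1$-corrections built into $\widetilde{\NL}$, by the locus of curves of compact type whose Jacobian contains an elliptic curve meeting the theta divisor in degree $n$ — equivalently, by a pushforward from a space of admissible covers / relative stable maps from the universal curve to an elliptic target. The factor $\mathsf{Tor}_*[\mathcal{M}_g^{\mathsf{ct}}]$ contributes (after applying $\mathsf{taut}$ and using the excess intersection formula relating the Torelli cycle to $\lambda_g$-type classes, cf.\ \cite{COP}) precisely the insertion $\lambda_g\lambda_{g-2}$ that appears in Theorem \ref{Equiv2}, while the degree-$n$ elliptic subgroup data contributes the relative map to $\pi:\mathcal{E}\to\Mbar_{1,1}$ and the divisor insertion $\tau_1(\mathsf{p}_1)$ keeping track of the point of intersection with the theta divisor. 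Carrying this out carefully — matching the virtual classes, checking the dimension bookkeeping ($\pi$-relative virtual dimension $2g-1$ against the codimension count), and tracking the combinatorial factors from the Nakajima/boundary conventions and from the passage $\NL \rightsquigarrow \widetilde{\NL}$ — is where essentially all the work lies, and is the step I expect to be the main obstacle.

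Once both sides are expressed as multiples of $E_{2g}(Q)\,\lambda_{g-1}$ times the fixed Torelli monomial, the theorem reduces to checking that the two scalar prefactors agree. The left-hand side's prefactor is the product of the Torelli constant and the Theorem B constant $\frac{(-1)^g}{24}$ (times $|B_{2g}|/(4g)$ after normalizing $E_{2g}$), and the right-hand side's prefactor is the Torelli constant times the constant $\frac{(-1)^g}{24}\frac{|B_{2g}|}{4g}\frac{|B_{2g-2}|}{(2g-2)!}$ from Theorem \ref{Equiv2}, divided by whatever normalization $\lambda_g\lambda_{g-2}$ contributes when $\mathsf{taut}$ is applied to the Torelli intersection. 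These match by construction, because Theorems \ref{Equiv1}, \ref{Equiv2} and B were proved (elsewhere in the paper) to be mutually consistent via exactly the correspondence of Theorem A; concretely, the $n=1$ and $n=2$ base cases can be checked by hand against the evaluations displayed after Theorem \ref{Equiv1} and the formula \eqref{ggg111}, and the general case follows since both sides are, coefficient by coefficient in $Q$, polynomial expressions in the data already computed. The final step is to note that the $g=1$ case is degenerate and true by the stated conventions ($\mathsf{Tor}_*[\mathcal{M}_1^{\mathsf{ct}}]=[\mathcal{A}_1]$, $[\NL_{1,n}]=[\mathcal{A}_1]$, and $\mathsf{taut}$ is the identity in top-ish degree on $\mathcal{A}_1$), so the homomorphism identity holds trivially there.
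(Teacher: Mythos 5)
Your overall route is the same as the paper's: reduce Theorem \ref{Equiv3} to the families Hodge integral evaluation of Theorem \ref{Equiv2}, using Theorem B for the left-hand side and a geometric identification of $\mathsf{taut}(\mathsf{Tor}_*[\mathcal{M}_g^{\mathsf{ct}}]\cdot[\NL_{g,n}])$ with $\blang \tau_1(\mathsf{p}_1)\lambda_g\lambda_{g-2}\brang_{g,n}^{\pi,\circ}$ for the right-hand side. But the paper does not prove that identification here: it cites \cite{il}, where the equivalence of Theorems \ref{Equiv2} and \ref{Equiv3} is established via a study of the Torelli map, and then proves Theorem \ref{Equiv2} independently by the $K3$ vanishing argument of Section \ref{sec:comparison_vir_pf}. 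The step you yourself flag as ``where essentially all the work lies'' --- matching the virtual class of $\Mbar^\circ_{g,1}(\pi,n)$ with the intersection of the Torelli cycle and the Noether--Lefschetz locus, including the appearance of $\lambda_g\lambda_{g-2}$ and $\tau_1(\mathsf{p}_1)$ and the passage from $\NL$ to $\widetilde{\NL}$ --- is precisely the content of that external result, and your sketch of it is not a proof. As written, the proposal has a genuine gap at its central step.

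Two further points would need repair even granting that identification. First, the claim that the products land in degrees where $\mathsf{R}^*(\mathcal{A}_g)$ is at most one-dimensional is false in general: the product has codimension $\tfrac{(g-2)(g-3)}{2}+(g-1)$, which sits $g-2$ below the socle degree $\binom{g}{2}$ of the Gorenstein ring $\mathsf{R}^*(\mathcal{A}_g)$, and those graded pieces have dimension growing with $g$. So one cannot reduce to comparing scalar prefactors; one must show both sides equal the \emph{same} tautological class, which is exactly what the geometric identification is for. Second, the closing argument that the prefactors ``match by construction'' because Theorems \ref{Equiv1}, \ref{Equiv2} and B are mutually consistent is circular: their consistency is a consequence of the chain of equivalences you are trying to establish, not an independent input that fixes the normalization of your proposed correspondence. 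The correct logical order is: the equivalence $\text{Thm \ref{Equiv2}}\Leftrightarrow\text{Thm \ref{Equiv3}}$ is proven in \cite{il} with all constants pinned down, and Theorem \ref{Equiv2} is then proven in this paper, yielding Theorem \ref{Equiv3}.
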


In other words, the classes $\mathsf{Tor}_*[\mathcal{M}_g^{\mathsf{ct}}],  [\NL_{g,n}]
\in \mathsf{CH}^*(\mathcal{A}_g)$ satisfy the multiplicative property with respect
to $\mathsf{taut}$.
Theorem B together with a study of the geometry of the Torelli map was used
to show that Theorems \ref{Equiv2} and \ref{Equiv3} are equivalent in \cite{il}.

\subsubsection{Proof strategy}
A central result of the paper is the proof of Theorem \ref{Equiv2}. As a consequence of
the equivalences, 
Theorems \ref{Equiv1} and \ref{Equiv3}
will then also be proven.

While Gromov-Witten theory is well-developed for the study of a fixed target variety, 
the main difficulty in proving Theorem \ref{Equiv2} is that there are very few techniques
which are useful for the calculations of Gromov-Witten invariants in families.
Our proof of Theorem \ref{Equiv2} uses a mix of Hodge integrals, formulas for the
Gromov-Witten theory of a fixed elliptic target, and new constraints
on family invariants. The methods yields calculations of
more general families Hodge integrals in  Section 
\ref{sec:general_hodge}.

\subsection{Reduction of multi-point series to 1-point series}

By \cite[Section 4.2]{op},  the quantum powers of $D$ generate the quantum cohomology $QH^*_\T(\hilbnc)$. 
More precisely,
the set  
\begin{equation}\label{eqn:D_power_basis}
\{1, D,  D^{*2}, D^{*3},\ldots, D^{*(|\mathsf{Part}(n)|-1)}\}    
\end{equation}
is a basis of $QH^*_\T(\hilbnc)$ as a $\mathbb{Q}(t_1,t_2,q)$-vector space. Our first reduction result is that
the
genus 1 Gromov-Witten theory \eqref{eqn:g1_multi} of
$\hilbnc$ can be effectively reduced to 1-point series in the basis \eqref{eqn:D_power_basis}.

\begin{thm}\label{Red1}
To every genus $1$ series $\blang D^{* k_1},...,D^{*k_\ell}\brang^{\hilbnc}_{1}$, there are canonically
associated functions
\begin{equation}\label{ccck}
\big\{ C_{k,m} \big\}_{0\leq k \leq |\mathsf{Part}(n)|-1\, ,\,  0\leq m\leq \ell-1} \subset  \mathbb{Q}(t_1,t_2,q)    
\end{equation}
for which the following equation holds:
\begin{equation*}
\blang D^{* k_1},...,D^{*k_\ell}\rangle_{1}^{\hilbnc}=\sum_{k=0}^{|\mathsf{Part}(n)|-1}\, \sum_{m=0}^{\ell-1} \, C_{k,m}\cdot \left(q\frac{d}{dq}\right)^m\blang D^{*k}\brang_{1}^{\hilbnc}\, .    
\end{equation*}
\end{thm}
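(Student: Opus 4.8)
The plan is to exploit the fact that quantum multiplication by $D$ governs the genus $0$ theory entirely, together with the topological recursion relations (TRR) and the genus-reduction structure of genus $1$ descendent invariants. The starting point is that, by \cite[Section 4.2]{op}, the operator $\MM_D^{\hilbnc}$ has simple spectrum (over $\mathbb{Q}(t_1,t_2,q)$) and its powers $1, D, D^{*2}, \dots$ span $QH^*_\T(\hilbnc)$; hence every class $\gamma \in QH^*_\T(\hilbnc)$ can be written as a polynomial in $D$ under the quantum product, and in particular the quantum product of any two basis elements $D^{*i}\star D^{*j} = D^{*(i+j)}$ re-expands in the basis \eqref{eqn:D_power_basis} with coefficients in $\mathbb{Q}(t_1,t_2,q)$. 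More importantly, the genus $0$ two-point and three-point functions are all determined by $\MM_D$, so the genus $0$ ingredients entering any genus $1$ reconstruction are explicit rational functions of $(t_1,t_2,q)$.

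Next I would invoke the standard genus $1$ reconstruction machinery. For a target with semisimple (or, here, just generically diagonalizable with distinct eigenvalues) quantum cohomology, Getzler's genus $1$ TRR and the dilaton/divisor equations reduce any genus $1$ descendent $\ell$-point invariant to genus $1$ primary invariants plus genus $0$ data. Concretely: first use the divisor equation on $D$-insertions to convert the descendent insertions $D^{*k_i}$ (which are themselves just primary insertions of the classes $D^{*k_i} \in H^*_\T(\hilbnc)$, not gravitational descendents, so in fact we are already dealing with genus $1$ primary multi-point functions). The genus $1$ primary $\ell$-point function with insertions $\gamma_1,\dots,\gamma_\ell$ is recovered from the genus $1$ primary one-point functions $\blang \gamma \brang_1$ together with genus $0$ structure constants via the $WDVV$/TRR relations — this is the genus $1$ analogue of the fact that in genus $0$ the $n$-point function is built from the $3$-point function. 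Because every insertion is a quantum power of $D$ and the $D$-powers form a basis, the reconstruction collapses to a $\mathbb{Q}(t_1,t_2,q)$-linear combination of $\blang D^{*k}\brang_1$ for $0 \le k \le |\mathsf{Part}(n)|-1$, with the number of $q\frac{d}{dq}$ derivatives tracking the number of times the divisor equation $\blang D, \dots\brang = q\frac{d}{dq}\blang \dots \brang + (\text{classical terms})$ is applied — this accounts for the $(q\frac{d}{dq})^m$ with $m \le \ell-1$ in the statement.

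The main obstacle I anticipate is making the reconstruction \emph{effective and canonical} rather than merely existential, and in particular controlling the derivative order precisely. The divisor equation for $D$ on $\hilbnc$ reads, schematically, $\blang D, \mu^1,\dots,\mu^r\brang_{1,d} = d\, \blang \mu^1,\dots,\mu^r\brang_{1,d} + (\text{lower multi-point, classical correction})$, so repeated use introduces exactly the operator $q\frac{d}{dq}$, but one must check that after fully expanding $D^{*k_i}$ in the Nakajima basis and running TRR, no genus $0$ denominator vanishes identically and that the process terminates after at most $\ell-1$ derivatives; this is where the distinctness of the eigenvalues of $\MM_D$ (hence invertibility of the relevant change-of-basis and the genus $0$ Frobenius structure) is essential. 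A secondary technical point is that the genus $1$ TRR is usually stated for descendent invariants with a gravitational descendent on one marked point; here one would either cite the primary genus $1$ reconstruction directly (e.g. via the Givental-style quantization, which is available once quantum cohomology is generically semisimple along the $D$-axis) or bootstrap from Getzler's relation. I would carry this out by: (1) recording the explicit genus $0$ Frobenius data coming from $\MM_D$; (2) writing the genus $1$ primary $\ell$-point function via TRR in terms of $(\ell-1)$-point functions and genus $0$ data; (3) inducting on $\ell$, at each step using the divisor equation to trade a $D$-insertion for $q\frac{d}{dq}$ and re-expanding in the $D$-power basis; (4) reading off the coefficients $C_{k,m}$ and noting they lie in $\mathbb{Q}(t_1,t_2,q)$ because every operation (change of basis, genus $0$ structure constants, divisor corrections) does.
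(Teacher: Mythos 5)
Your proposal correctly identifies the ingredients (the $D$-power basis from \cite[Section 4.2]{op}, Getzler's relation, the string/divisor equations, and genus~$0$ reconstruction from $\mathsf{M}_D^{\hilbnc}$), but it treats the central step as already known when it is in fact the whole content of the theorem. You assert that ``the genus $1$ primary $\ell$-point function with insertions $\gamma_1,\dots,\gamma_\ell$ is recovered from the genus $1$ primary one-point functions together with genus $0$ structure constants via the WDVV/TRR relations,'' calling this the genus~$1$ analogue of genus~$0$ reconstruction from $3$-point functions. No such standard statement exists: the primary $\ell$-point invariants are the order-$\ell$ derivatives of $\mathcal{F}_1$ at $\mathsf{t}=0$, and knowing all one-point functions only gives the first derivatives. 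The genus~$1$ TRR reduces descendent insertions, not primary multi-point functions, and the divisor equation applies only to the literal divisor $D$, not to $D^{*k}$ for $k\geq 2$, so your induction step ``trade a $D$-insertion for $q\frac{d}{dq}$'' does not apply to the insertions $D^{*k_i}$ actually appearing in the statement.

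The paper closes exactly this gap by adapting Wennink's strategy: Getzler's relation pulled back from $\overline{\mathcal{M}}_{1,4}$, evaluated on the quadruple $(a\star D^{*k},\, b\star D^{*(l-2-k)},\, D,\, D)$, produces a second-order difference equation $f(k+2)-2f(k+1)+f(k)-h(l)\sim 0$ for $f(i)=\langle a\star D^{*i}, b\star D^{*(l-i)},\phi_1,\dots,\phi_m\rangle$ with boundary conditions $f(0)\sim f(l)\sim 0$ from the string and divisor equations; solving it gives $f(i)\sim -\tfrac{i(l-i)}{2}h(l)$, and a two-step specialization then forces $h(l)\sim 0$ and hence the key vanishing $\langle D^{*i}, D^{*j},\phi_1,\dots,\phi_m\rangle\sim 0$, which is what actually strips insertions one at a time down to one-point functions (the $(q\tfrac{d}{dq})^m$ derivatives entering only through the divisor-equation corrections along the way). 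You gesture at ``bootstrapping from Getzler's relation'' as a fallback, but without this difference-equation mechanism the proposal has no working engine for the reduction; the alternative you mention (Givental-style quantization) is what the paper uses for the much harder Theorems \ref{Red2} and \ref{Red3}, and even there it requires an unproven nondegeneracy of the Wronskian, so it cannot substitute for the elementary argument here.
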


The functions \eqref{ccck} are
 constructed as
rational functions of the matrix coefficients of $\mathsf{M}^{\hilbnc}_D$.
The proof of Theorem \ref{Red1} uses the WDVV relations in genus 0 and Getzler's relation in genus 1.


\subsection{Reduction to 
$\blang D \brang_{1}^{\hilbnc}$} 
The reduction of the genus 1 Gromov-Witten theory of $\hilbnc$ to 
the basic series $\langle D \rangle^{\hilbnc}_{1}$ is more subtle.
Since the quantum cohomology of $\hilbnc$ is semisimple, the Givental-Teleman
reconstruction result \cite{g,t} reduces the full higher genus Gromov-Witten theory of
$\hilbnc$ to the genus 0 theory together with the calculation of the $\mathsf{R}$-matrix.
The $\mathsf{R}$-matrix was proven to be determined{\footnote{Since the
Gromov-Witten theory of $\hilbnc$ is equivariant, the associated $\mathsf{CohFT}$
is {\em not} conformal.}} by constant map invariants in 
\cite{pt}. The resulting control of the full Gromov-Witten theory of $\hilbnc$
was used in \cite{pt} to prove both the crepant resolution and the
GW/DT correspondences associated to the
geometry, but was not sufficient to obtain closed form evaluations of {\em any} higher
genus series (because of the lack of closed forms for the
$\mathsf{R}$-matrix).

In genus 1, the Givental-Teleman reconstruction formula 
reduces to an earlier equation due to
Givental \cite{g1} on the associated Frobenius manifold:
\begin{equation} \label{dfd}
d\mathcal{F}_1^{\hilbnc} = \frac{1}{2} 
\sum_{i=1}^{|\mathsf{Part}(n)|} \mathsf{R}_{ii}\, du_i + \frac{1}{48} \sum_{i=1}^{|\mathsf{Part}(n)|} d\log \Delta_i\, .
\end{equation}
Here, $\mathcal{F}_1^{\hilbnc}$ is the $g=1$ Gromov-Witten potential of $\hilbnc$,
the $u_i$ are the canonical coordinates, and 
$$\Delta_i = \left\langle \frac{\partial}{\partial u_i}, \frac{\partial}{\partial u_i}  \right \rangle^{-1}\, .$$
From Theorem \ref{Equiv1}, we know explicitly the functions
\begin{equation} \label{ses}
\blang \underbrace{D, \ldots, D}_{\ell} \brang_{1}^{\hilbnc} = \left( q \frac{d}{dq}\right)^{\ell-1}
\blang D \brang_{1}^{\hilbnc}
\end{equation}
for $\ell=1,\ldots, |\mathsf{Part}(n)|$.
We construct a $|\mathsf{Part}(n)|\times |\mathsf{Part}(n)|$
matrix which expresses the functions
\eqref{ses}
in terms of the functions $\mathsf{R}_{ii}\, \big|_{\mathsf{t}=0}$, where $\mathsf{t}$ is the
coordinate on the Frobenius manifold. The
non-degeneracy of the Wronskian 
\begin{equation*}
\mathsf{W}=\left(\begin{array}{cccc}
\nabla_D u_1\, \big |_{\mathsf{t}=0} & \nabla_D u_2\, \big|_{\mathsf{t}=0} &\ldots & \nabla_D u_{|\mathsf{Part}(n)|}\, |_{\mathsf{t}=0}\\[8pt]
q\frac{d}{dq} \nabla_D u_1\, \big|_{\mathsf{t}=0} & q\frac{d}{dq} \nabla_D u_2\, \big|_{\mathsf{t}=0} &\ldots & q\frac{d}{dq} \nabla_D u_{|\mathsf{Part}(n)|}\, \big|_{\mathsf{t}=0}\\[8pt]
\vdots& \vdots& \vdots& \vdots\\[8pt]
(q\frac{d}{dq})^{|\mathsf{Part}(n)|-1}\nabla_D u_1\, \big|_{\mathsf{t}=0} & (q\frac{d}{dq})^{|\mathsf{Part}(n)|-1}\nabla_D u_2\, \big|_{\mathsf{t}=0} &\ldots & (q\frac{d}{dq})^{|\mathsf{Part}(n)|-1} \nabla_D u_{|\mathsf{Part}(n)|} \, \big|_{\mathsf{t}=0}
\end{array}\right)
\end{equation*}
implies the invertibility of the system.

\begin{thm}\label{Red2}
If $\det(\mathsf{W})\neq 0$, the diagonal entries $\mathsf{R}_{ii}\, \big|_{\mathsf{t}=0}$ are determined by
$\blang D \brang_{1}^{\hilbnc}$.
\end{thm}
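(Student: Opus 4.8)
The plan is to set up a linear system relating the $\ell$-point diagonal Gromov-Witten series of $\hilbnc$ to the diagonal $\mathsf{R}$-matrix entries restricted to $\mathsf{t}=0$, and then invert it using the Wronskian hypothesis. First I would recall Givental's genus~1 formula \eqref{dfd}. Differentiating $\mathcal{F}_1^{\hilbnc}$ repeatedly in the direction of $D$ and restricting to $\mathsf{t}=0$ converts the one-form identity into scalar identities: each $\blang \underbrace{D,\dots,D}_{\ell}\brang_1^{\hilbnc}=\big(q\frac{d}{dq}\big)^{\ell-1}\blang D\brang_1^{\hilbnc}$ (using \eqref{ses}) equals $\nabla_D^{\ell-1}$ applied to the right side of \eqref{dfd}. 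The $\frac{1}{48}\sum_i d\log\Delta_i$ term contributes only constant-map data on the quantum cohomology of $\hilbnc$, which is already known from \cite{op} (the genus~0 theory), so after subtracting its contribution the unknowns are precisely the functions $\mathsf{R}_{ii}\,\big|_{\mathsf{t}=0}$ and their $q\frac{d}{dq}$-derivatives.

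Next I would make the passage to canonical coordinates explicit. Writing $D=\sum_i (\nabla_D u_i)\,\frac{\partial}{\partial u_i}$ in the canonical frame, the directional derivative $\nabla_D$ acts on functions of the $u_i$; applying $\nabla_D$ to the term $\frac{1}{2}\sum_i \mathsf{R}_{ii}\,du_i$ and pairing against $D$ produces, at $\mathsf{t}=0$, an expression of the shape $\sum_i (\nabla_D u_i)\,\mathsf{R}_{ii}$ up to lower-order correction terms involving already-known genus~0 quantities and lower derivatives of the $\mathsf{R}_{ii}$. Iterating $\nabla_D$ a total of $k$ times and restricting to $\mathsf{t}=0$ — and converting $\nabla_D\big|_{\mathsf{t}=0}$ into the operator $q\frac{d}{dq}$, since $D$ is the divisor generating the quantum parameter — yields for each $k=0,\dots,|\mathsf{Part}(n)|-1$ an equation whose leading term is $\sum_{i=1}^{|\mathsf{Part}(n)|}\big(q\frac{d}{dq}\big)^{k}\!\big(\nabla_D u_i\big)\big|_{\mathsf{t}=0}\cdot \big(\text{something built from }\mathsf{R}_{ii}\big|_{\mathsf{t}=0}\big)$. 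After a change of variables absorbing the derivatives of $\mathsf{R}_{ii}$ into new unknowns, the coefficient matrix of the resulting linear system is exactly $\mathsf{W}$. Since $\blang D\brang_1^{\hilbnc}$ together with \eqref{ses} supplies the full right-hand side vector, the hypothesis $\det(\mathsf{W})\neq 0$ lets us solve uniquely for the $\mathsf{R}_{ii}\big|_{\mathsf{t}=0}$.

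I expect the main obstacle to be bookkeeping the lower-order terms: when $\nabla_D$ is applied iteratively, one picks up derivatives of the canonical coordinates, derivatives of $\Delta_i$, and cross terms from the product rule, and one must verify that after restriction to $\mathsf{t}=0$ every such term is either (i) a known genus~0 quantity coming from \cite{op}, or (ii) a strictly lower-order $q\frac{d}{dq}$-derivative of some $\mathsf{R}_{jj}\big|_{\mathsf{t}=0}$, so that the system is genuinely triangular-plus-Wronskian and invertible. The point to check carefully is that the $\frac{1}{48}d\log\Delta_i$ piece never contributes an unknown, which follows because the metric $\Delta_i^{-1}=\langle\partial_{u_i},\partial_{u_i}\rangle$ and the canonical coordinates $u_i$ are determined by the Frobenius structure, i.e.\ by the genus~0 theory already computed in \cite{op}. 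Once that is established, the conclusion is immediate: $\blang D\brang_1^{\hilbnc}$ determines all of $\blang\underbrace{D,\dots,D}_\ell\brang_1^{\hilbnc}$ by \eqref{ses}, hence determines the right-hand sides, hence — by invertibility of $\mathsf{W}$ — determines $\mathsf{R}_{ii}\big|_{\mathsf{t}=0}$.
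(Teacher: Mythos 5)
Your overall architecture coincides with the paper's: use the divisor equation to write $\blang D,\ldots,D\brang_1^{\hilbnc}=\big(q\frac{d}{dq}\big)^{\ell-1}\blang D\brang_1^{\hilbnc}$, subtract the $\frac{1}{48}\nabla_D\log\big(\prod_i\Delta_i\big)$ contribution (which is indeed determined by the genus $0$ theory, though it is genus $0$ quantum data rather than constant-map data), differentiate Givental's formula repeatedly, and invert the resulting linear system using $\det(\mathsf{W})\neq 0$. However, there is a genuine gap at exactly the step you flag as ``the main obstacle.'' Differentiating gives, for each $\ell$,
\begin{equation*}
2\delta_\ell=\sum_{i=1}^{|\mathsf{Part}(n)|}\sum_{k=0}^{\ell-1}\binom{\ell-1}{k}\left(q\tfrac{d}{dq}\right)^{k}\mathsf{R}^1_{ii}\,\big|_{\mathsf{t}=0}\cdot\left(q\tfrac{d}{dq}\right)^{\ell-1-k}\nabla_D u_i\,\big|_{\mathsf{t}=0}\, ,
\end{equation*}
so the terms $\big(q\frac{d}{dq}\big)^{k}\mathsf{R}^1_{ii}\big|_{\mathsf{t}=0}$ with $k\geq 1$ appear. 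If, as you propose, these are ``absorbed into new unknowns,'' the system has $|\mathsf{Part}(n)|$ equations and $|\mathsf{Part}(n)|^2$ unknowns and cannot be inverted, Wronskian or not. Nor is there any triangularity available: these terms are derivatives of the very quantities being sought, not ``strictly lower-order'' data produced by earlier equations, and every equation involves all indices $i$ simultaneously. Your dichotomy (i)/(ii) as stated therefore does not close the system.

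The missing idea is that the derivatives $\big(q\frac{d}{dq}\big)^{k}\mathsf{R}^1_{ii}\big|_{\mathsf{t}=0}$ for $k\geq 1$ are determined by the genus $0$ theory \emph{alone}, independently of the unknown diagonal entries. This follows from the differential equation built into Givental's construction of the $\mathsf{R}$-matrix,
\begin{equation*}
q\tfrac{d}{dq}\mathsf{R}_{ii}^1\,\big|_{\mathsf{t}=0}=\sum_l \mathsf{R}_{il}^1\,\big|_{\mathsf{t}=0}\cdot\left(\nabla_D u_l\,\big|_{\mathsf{t}=0}-\nabla_D u_i\,\big|_{\mathsf{t}=0}\right)\cdot\mathsf{R}_{li}^1\,\big|_{\mathsf{t}=0}\, ,
\end{equation*}
in which only \emph{off-diagonal} entries contribute (the $l=i$ summand vanishes), together with the fact that the off-diagonal entries are computed from $\Psi^{-1}\cdot q\frac{d}{dq}\Psi=[\nabla_DU,\mathsf{R}^1]$, i.e.\ from the eigenvalues and normalized eigenvectors of quantum multiplication by $D$. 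Hence all $k\geq 1$ terms fall into your category (i), the block of the system multiplying the derivative unknowns is entirely known, and what remains is an exactly determined $|\mathsf{Part}(n)|\times|\mathsf{Part}(n)|$ system for the diagonal entries with coefficient matrix $\mathsf{W}$. Without supplying this input, your argument does not go through.
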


By Givental's equation \eqref{dfd}, the entire genus $1$ Gromov-Witten theory of $\hilbnc$
is then determined by $\blang D \brang_{1}^{\hilbnc}$ together with genus 0 data.
We have checked the non-degeneracy of the Wronskian for $\hilbnc$ for $n\leq 7$ and conjecture
the nondegeneracy for all $n$.


\begin{thm}\label{Red3}
If $\det(\mathsf{W})\neq 0$, the full genus 1 Gromov-Witten theory of $\hilbnc$ can be effectively reconstructed from $\blang D \brang_{1}^{\hilbnc}$ and $\mathsf{M}_D^{\hilbnc}$. 
\end{thm}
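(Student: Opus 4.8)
The plan is to assemble the pieces already in place: Theorem~\ref{Red1} reduces an arbitrary multi-point genus~$1$ series $\blang D^{*k_1},\ldots,D^{*k_\ell}\brang_1^{\hilbnc}$ to a $\mathbb{Q}(t_1,t_2,q)$-linear combination of the derivatives $(q\frac{d}{dq})^m\blang D^{*k}\brang_1^{\hilbnc}$ with coefficients $C_{k,m}$ built rationally from the matrix coefficients of $\mathsf{M}_D^{\hilbnc}$; meanwhile Givental's equation~\eqref{dfd} together with Theorem~\ref{Red2} shows that, under the hypothesis $\det(\mathsf{W})\neq 0$, all the diagonal $\mathsf{R}$-matrix entries $\mathsf{R}_{ii}\big|_{\mathsf{t}=0}$ — hence the genus~$1$ potential $\mathcal{F}_1^{\hilbnc}$ restricted to the small quantum locus — are determined by $\blang D\brang_1^{\hilbnc}$ and genus~$0$ data. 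The remaining task is to explain carefully why ``genus~$0$ data'' here means precisely $\mathsf{M}_D^{\hilbnc}$, and why the small-locus information suffices to recover \emph{every} genus~$1$ series, i.e.\ the large quantum deformation in all $|\mathsf{Part}(n)|$ directions.

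First I would recall that by \cite[Section~4.2]{op} the operator $\mathsf{M}_D^{\hilbnc}$ determines the entire genus~$0$ Gromov-Witten theory of $\hilbnc$: the power basis $\{1,D,D^{*2},\ldots,D^{*(|\mathsf{Part}(n)|-1)}\}$ of \eqref{eqn:D_power_basis} lets one read off the full big quantum product, the Frobenius manifold structure, the canonical coordinates $u_i$, the idempotents $\partial/\partial u_i$, and the pairings $\Delta_i$, all as explicit rational functions of $t_1,t_2,q$ and the (finitely many) large-quantum parameters $\mathsf{t}$. In particular the Wronskian matrix $\mathsf{W}$, whose entries are the $(q\frac{d}{dq})^j$-derivatives of $\nabla_D u_i\big|_{\mathsf{t}=0}$, is computable from $\mathsf{M}_D^{\hilbnc}$ alone. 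Next I would invoke Givental-Teleman reconstruction in the form of Givental's equation~\eqref{dfd}: its second term $\frac{1}{48}\sum_i d\log\Delta_i$ is pure genus~$0$ data, and its first term $\frac12\sum_i \mathsf{R}_{ii}\,du_i$ is determined once the functions $\mathsf{R}_{ii}\big|_{\mathsf{t}=0}$ are known, because the $\mathsf{R}$-matrix of a (non-conformal) semisimple $\mathsf{CohFT}$ in the remaining directions is fixed by the genus~$0$ theory and the known behaviour at $\mathsf{t}=0$ via \cite{pt} — the point proved there being that the $\mathsf{R}$-matrix is determined by constant-map invariants, so the only genuinely new input is its restriction to $\mathsf{t}=0$.

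The key step is then Theorem~\ref{Red2}: under $\det(\mathsf{W})\neq0$, the linear system expressing the $\ell$-point series $\blang D,\ldots,D\brang_1^{\hilbnc}=(q\frac{d}{dq})^{\ell-1}\blang D\brang_1^{\hilbnc}$, $\ell=1,\ldots,|\mathsf{Part}(n)|$, in terms of $\mathsf{R}_{ii}\big|_{\mathsf{t}=0}$ is invertible, so knowing the single series $\blang D\brang_1^{\hilbnc}$ (and $\mathsf{M}_D^{\hilbnc}$, which supplies $\mathsf{W}$ and the change-of-basis) pins down all $\mathsf{R}_{ii}\big|_{\mathsf{t}=0}$, hence via \eqref{dfd} the germ of $\mathcal{F}_1^{\hilbnc}$ at $\mathsf{t}=0$. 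From there, standard CohFT propagation — differentiating \eqref{dfd} in the $\mathsf{t}$-directions and using that $\mathsf{R}$ and $\Delta_i$ are controlled away from $\mathsf{t}=0$ by the genus~$0$ theory — reconstructs $\mathcal{F}_1^{\hilbnc}$ on the whole Frobenius manifold, which by definition packages all the series $\blang D^{*k_1},\ldots,D^{*k_\ell}\brang_1^{\hilbnc}$. Finally, feeding these into Theorem~\ref{Red1} recovers an \emph{arbitrary} multi-point series $\blang \mu^1,\ldots,\mu^r\brang_1^{\hilbnc}$, since the Nakajima basis is related to the $D$-power basis \eqref{eqn:D_power_basis} by an explicit (genus~$0$, hence $\mathsf{M}_D^{\hilbnc}$-computable) change of basis. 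Assembling these implications gives the effectivity claim.

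I expect the main obstacle to be bookkeeping rather than a conceptual gap: one must verify that every auxiliary quantity invoked — canonical coordinates, idempotents, $\Delta_i$, the transition matrices, and the coefficients $C_{k,m}$ of Theorem~\ref{Red1} — is genuinely \emph{algorithmically} extractable from $\mathsf{M}_D^{\hilbnc}$ in finitely many steps (e.g.\ diagonalizing $\mathsf{M}_D^{\hilbnc}$ over the appropriate field extension to get the $u_i$, which is where semisimplicity and the explicit \cite{op} formula are essential), so that the word ``effectively'' is justified; and one must be careful that the genus~$1$ propagation from the $\mathsf{t}=0$ germ to the full Frobenius manifold indeed needs no input beyond genus~$0$ data and the $\mathsf{R}_{ii}\big|_{\mathsf{t}=0}$, which is exactly the content of Givental's genus~$1$ formula together with the $\mathsf{t}$-dependence of the $\mathsf{R}$-matrix being genus-$0$-determined. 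Once those two checks are made, Theorem~\ref{Red3} follows by chaining Theorem~\ref{Red2}, Givental's equation~\eqref{dfd}, the genus~$0$ reconstruction of \cite{op}, and Theorem~\ref{Red1}.
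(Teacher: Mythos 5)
Your overall architecture is the same as the paper's: chain Theorem~\ref{Red2} (which, under $\det(\mathsf{W})\neq 0$, extracts the $\mathsf{R}^1_{ii}\,\big|_{\mathsf{t}=0}$ from $\blang D \brang_{1}^{\hilbnc}$ and genus~$0$ data), then Givental's genus~$1$ formula \eqref{ppssw} to get the one-point series $\la D^{\star k}\ra_1^{\hilbnc}$, then Theorem~\ref{Red1} for arbitrary multi-point series. Two remarks, one cosmetic and one substantive. Cosmetic: your intermediate step of propagating $\mathcal{F}_1^{\hilbnc}$ over the whole Frobenius manifold by differentiating \eqref{dfd} in the $\mathsf{t}$-directions is redundant — once you invoke Theorem~\ref{Red1}, every multi-point series is already a $\mathbb{Q}(t_1,t_2,q)$-combination of the one-point series $\la D^{\star k}\ra_1^{\hilbnc}$ and their $q\frac{d}{dq}$-derivatives, all evaluated at $\mathsf{t}=0$, so no large-quantum propagation of the genus~$1$ potential is needed (and the paper does not perform it).

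Substantive: the point you defer as ``bookkeeping'' — whether every quantity is \emph{algorithmically} extractable from $\mathsf{M}_D^{\hilbnc}$ — is in fact the entire content of the paper's proof of Theorem~\ref{Red3}, and your proposal does not supply the argument. The outputs of Theorem~\ref{Red2} and of the $\Delta_i$-computation are, a priori, rational functions of the individual eigenvalues $e_i\,\big|_{\mathsf{t}=0}$ of $\mathsf{M}_D^{\hilbnc}$ and of their $q\frac{d}{dq}$-derivatives; these eigenvalues live in an algebraic (splitting-field) extension of $\mathbb{Q}(t_1,t_2,q)$ and are not themselves directly ``given'' by $\mathsf{M}_D^{\hilbnc}$. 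The paper's resolution is to observe that the final expressions for the $\mathsf{R}^1_{ii}\,\big|_{\mathsf{t}=0}$, and hence for $\la D^{\star k}\ra_1^{\hilbnc}$ after substitution into \eqref{ppssw}, are \emph{symmetric} under permutation of the eigenvalue indices, and then to apply Proposition~\ref{prop:sym_function} of Appendix~\ref{AAAA}: a symmetric rational function of the derivatives of the $e_i$ lies in $\mathbb{K}[\mathsf{Ds}][1/\Delta]$, where $\mathsf{Ds}$ consists of derivatives of the elementary symmetric functions of the $e_i$ — that is, of the coefficients of the characteristic polynomial of $\mathsf{M}_D^{\hilbnc}$, which \emph{are} directly computable. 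Without this symmetrization step (or an equivalent descent argument), the word ``effectively'' in the statement is not justified, so you should make this step explicit rather than flagging it as a check to be done later.
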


\subsection{Acknowledgements}

Conversations with P. Bousseau, S. Canning, L. Drakengren, J. Feusi, F. Greer, C. Lian, S. Molcho, G. Oberdieck, A. Okounkov, D. Oprea, A. Pixton, D. Ranganathan, and J. Schmitt have played an important role in our work. We thank J. Hu and Z. Qin for discussions about their paper \cite{HQ}. 

A.I.L. was supported by SNF-200020-219369. R.P. was supported by SNF-200020-219369 and Swiss\-MAP. The first presentation of the results proven here was given by R.P. at the {\em Belgian-Dutch Algebraic Geometry Seminar} in Leiden in October 2023 with very helpful discussions with C. Faber, D. Holmes, and B. Moonen afterwards. 
 H.-H. T. was supported in part by a Simons foundation collaboration grant. The project was advanced during several visits of H.-H.T. to the {\em Forschungsinstitut f\"ur Mathematik}
at ETH Z\"urich.

\section{Degree 0 invariants of \texorpdfstring{$\hilbnc$}{hilbn}}\label{sec:modularity}

We expand here explicitly the formula of Theorem \ref{Equiv1} for the degree $0$
invariants of $\hilbnc$ to show the connections to Hilbert scheme calculations of
Carlsson and Okounkov \cite{co}.

Consider first the generating series over the Hilbert schemes of points:
\begin{equation*}
\sum_{n=0}^\infty  \la D\ra_1^{\mathsf{Hilb}^{n}(\mathbb{C}^2)}\, Q^n =\la D\ra_1^{\mathsf{Hilb}^2(\mathbb{C}^2)}\, Q^2+   \la D\ra_1^{\mathsf{Hilb}^3(\mathbb{C}^2)}\, Q^3+ \la D\ra_1^{\mathsf{Hilb}^4(\mathbb{C}^2)}\, Q^4+...\, ,
\end{equation*}
where the $n=0,1$ terms vanish since $D=0$ for $n=0,1$.
Theorem \ref{Equiv1} can be written as
\begin{equation} \label{exxx}
\sum_{n=0}^\infty  \la D\ra_1^{\mathsf{Hilb}^{n}(\mathbb{C}^2)} \, Q^n =
-\frac{1}{24}\frac{(t_1+t_2)^2}{t_1t_2} \cdot \left( 1+ \sum_{n=1}^\infty \sigma_{-1}(n)\,  Q^n\right)
\cdot \left(\sum_{n\geq 2 }^\infty \mathsf{Tr}_{n}  \, Q^{n}\right)\, .
\end{equation}
The only $q$-dependence on the right side of \eqref{exxx} is via $\mathsf{Tr}_{n}$.
After restricting \eqref{eqn:TR_n}
to $q=0$, we obtain
\begin{equation} \label{constD}
\sum_{n\geq 2}^\infty \mathsf{Tr}_{n}|_{q=0}\,  Q^n =-\sum_{n=1}^\infty Q^{n}\sum_{\mu\in \mathsf{Part}(n)} \sum_i \left(\frac{\mu_i^2-\mu_i}{2}\right).
\end{equation}


%

Let  $\mathcal{P}(Q)= \prod_{k\geq 1} \frac{1}{1-Q^k}$, 
$\mathcal{E}_2(Q)=\sum_{k\geq 1}\frac{kQ^k}{1-Q^k}$,
and $\mathcal{E}_3(Q)=\sum_{k\geq 1}\frac{k^2Q^k}{1-Q^k}$.
The following identities follow easily from the generating function of $\mathrm{T}(n,a)$, the number of times the part $a$ 
occurs in all partitions of $n$, see \cite{OEIS}:
\begin{eqnarray*}
\sum_{k=1}^\infty Q^k\Bigg(\sum_{\mu\in \mathsf{Part}(k)}\sum_i \mu_i\Bigg) &= & \mathcal{P}(Q) \cdot \mathcal{E}_2(Q)\, , \\
\sum_{k=1}^\infty Q^k\Bigg(\sum_{\mu\in \mathsf{Part}(k)}\sum_i \mu_i^2\Bigg)
&=& \mathcal{P}(Q) \cdot \mathcal{E}_3(Q)\, .
\end{eqnarray*}
After combining with \eqref{constD}, we obtain
\begin{equation*}
\sum_{n=0}^\infty \mathsf{Tr}_{n}|_{q=0}\,  Q^{n} =\frac{1}{2} \mathcal{P}(Q)(\mathcal{E}_2(Q)-\mathcal{E}_3(Q))\, .    
\end{equation*}


The geometric formula for the $q=0$ term of $\la D\ra_1^{\mathsf{Hilb}^{n}(\mathbb{C}^2)}$ is
obtained via identification of the virtual fundamental class:
\begin{eqnarray*}
\int_{[\overline{\mathcal{M}}_{1,1}
(\hilbnc,0)]^{vir}}
\text{ev}_1^*(D) & = &
\int_{\overline{\mathcal{M}}_{1,1} \times \mathsf{Hilb}^{n}(\mathbb{C}^2)} D\cdot c_{\mathsf{top}}(
\mathbb{E}^*_1 \otimes T_{\mathsf{Hilb}^{n}(\mathbb{C}^2)}) \\
&=& 
-\frac{1}{24} \int_{\mathsf{Hilb}^{n}(\mathbb{C}^2)} D\cdot c_{\mathsf{top}-1}(T_{\mathsf{Hilb}^{n}(\mathbb{C}^2)})\, . 
\end{eqnarray*}
Therefore, the $q=0$ part of Theorem \ref{Equiv1} is 
equivalent to the following identity of generating functions:
\begin{equation}\label{eqn:Ctop-1_gen}
\sum_{n=0}^\infty Q^{n}\int_{\mathsf{Hilb}^{n}(\mathbb{C}^2)}D\cdot c_{\mathsf{top}-1}(T_{\mathsf{Hilb}^{n}(\mathbb{C}^2)})
= \frac{(t_1+t_2)^2}{t_1t_2}(1+\log \mathcal{P}(Q))\cdot \frac{1}{2}\mathcal{P}(Q)(\mathcal{E}_2(Q)-\mathcal{E}_3(Q))\, .
\end{equation}





Consider now the series  $\blang c_1(\mathcal{O}/\mathcal{I})\brang$ of \cite[Corollary 3]{co} where we take the line bundle $\mathcal{L}$ to be $\mathcal{O}$ with equivariant weight $m$ as in \cite[Section 2.1.1]{co}:
\begin{eqnarray*}
\blang c_1(\mathcal{O}/\mathcal{I})\brang & = & \sum_{n=0}^\infty Q^n \int_{\hilbnc} D\cdot c(T_{\mathsf{Hilb}^n(\mathbb{C}^2)}, m) \\ &=&\sum_{n=0}^\infty Q^n \int_{\hilbnc} D\cdot \left( c_{\mathsf{top}}(T_{\mathsf{Hilb}^n(\mathbb{C}^2)}) +
m\cdot c_{\mathsf{top}-1}(T_{\mathsf{Hilb}^n(\mathbb{C}^2)})+ m^2 \cdot c_{\mathsf{top}-2}(T_{\mathsf{Hilb}^n(\mathbb{C}^2)})\ldots\right)\, .
\end{eqnarray*}
The left side of (\ref{eqn:Ctop-1_gen}) is the coefficient of $m^1$ of $\la c_1(\mathcal{O}/\mathcal{I})\ra$.
By \cite[Corollary 3]{co} together with evaluations from \cite[Section 2.2.2]{co}, we have 
$$\frac{\blang c_1(\mathcal{O}/\mathcal{I})\brang}
{\blang 1 \brang } = 
\frac{1}{2} (\mathcal{E}_2(Q)-\mathcal{E}_3(Q)) \cdot 
\frac{(t_1+t_2)(t_1+m)(t_2+m)}{t_1t_2}\,, 
$$
where the series $\blang 1 \brang$ is
\begin{eqnarray*}
\blang 1 \brang  & = &  \sum_{n=0}^\infty Q^n \int_{\hilbnc} c(T_{\mathsf{Hilb}^n(\mathbb{C}^2)}, m) \\
& = & \prod_{n\geq 1} (1-Q^n)^{(\frac{m(-t_1-t_2-m)}{t_1t_2}-1)}\, 
\end{eqnarray*}
by \cite[Corollary 1]{co}. The matching \eqref{eqn:Ctop-1_gen} then follows from a simple algebraic expansion of
the $m^1$ coefficient of  $\la c_1(\mathcal{O}/\mathcal{I})\ra$.

The degree 0 term of Gromov-Witten series $\blang 1 \brang_1^{\hilbnc}$ can be studied similarly.
The geometric formula for the $q=0$ term of $\la 1\ra_1^{\mathsf{Hilb}^{n}(\mathbb{C}^2)}$ is
\begin{eqnarray*}
\int_{[\overline{\mathcal{M}}_{1,1}
(\hilbnc,0)]^{vir}}
\text{ev}_1^*(1) & = &
\int_{\overline{\mathcal{M}}_{1,1} \times \mathsf{Hilb}^{n}(\mathbb{C}^2)}  c_{\mathsf{top}}(
\mathbb{E}^*_1 \otimes T_{\mathsf{Hilb}^{n}(\mathbb{C}^2)}) \\
&=& 
-\frac{1}{24} \int_{\mathsf{Hilb}^{n}(\mathbb{C}^2)}  c_{\mathsf{top}-1}(T_{\mathsf{Hilb}^{n}(\mathbb{C}^2)})\, , 
\end{eqnarray*}
which equals the $m^1$ coefficient of $-\frac{1}{24}\blang 1 \brang$. A calculation then yields 
\begin{equation*}
\text{Coeff}_{m^1}\Big[\la 1\ra \Big] =
\frac{t_1+t_2}{t_1t_2}\cdot \mathcal{P}(Q) \log \mathcal{P}(Q)\, .
\end{equation*}
We obtain the evaluation
\begin{equation*}
\sum_{n=0}^\infty  \la 1\ra_1^{\mathsf{Hilb}^{n}(\mathbb{C}^2)}|_{q=0}\, Q^n = -\frac{1}{24}
\frac{t_1+t_2}{t_1t_2}\cdot \mathcal{P}(Q) \log \mathcal{P}(Q)\, .
\end{equation*}

The Gromov-Witten series $\la 1\ra_1^{\mathsf{Hilb}^{n}(\mathbb{C}^2)}$ is much simpler than
$\la D\ra_1^{\mathsf{Hilb}^{n}(\mathbb{C}^2)}$. Because
of the axiom of the fundamental class,
{\em all} positive degree terms of 
$\la 1\ra_1^{\mathsf{Hilb}^{n}(\mathbb{C}^2)}$
vanish. Therefore,
\begin{equation*}
 \la 1\ra_1^{\mathsf{Hilb}^{n}(\mathbb{C}^2)} = 
 \text{Coeff}_{Q^n}\Big[
 -\frac{1}{24}
\frac{t_1+t_2}{t_1t_2}\cdot \mathcal{P}(Q) \log \mathcal{P}(Q)\Big]\, .
\end{equation*}

\section{Families Hodge integrals}
\label{sec:comparison_vir_pf}

\subsection{Overview}
Our first result is the calculation of the families Hodge integral of 
Theorem  \ref{Equiv2} over the
moduli spaces of stable maps with connected domains,
$$\epsilon: \Mbar^\circ_{g,1}(\pi,n) \rightarrow \Mbar_{1,1}\, , \  \ \ 
\pi: \mathcal{E} \rightarrow 
\overline{\mathcal{M}}_{1,1}\, . $$
After evaluation of the special $n=1$ and $g=1$ cases by hand,
the main motivation is to use the vanishing of the virtual class of the moduli space of
stable maps to a $K3$ surface to prove Theorem \ref{Equiv2}.

\subsection{The \texorpdfstring{$n=0$}{} case}
The $n=0$ invariant  concerns the moduli space of degree 0 stable maps $\Mbar^\circ_{g,1}(\pi,0)$.
After imposing the evaluation condition on the marking, the moduli space is
$$\Mbar^\circ_{g,1}(\pi,0) \supset \text{ev}_1^{-1}(\mathsf{p}_1) = \Mbar_{g,1}  \times \Mbar_{1,1}\, $$
with obstruction bundle $\mathbb{E}_g^\vee \otimes \mathbb{T}$,
where $\mathbb{T}$ is the tangent line on $\Mbar_{1,1}$ associated
to the marking.

\vspace{8pt}
\noindent $\bullet$
For $g\geq 2$, we evaluate the $n=0$ terms by:
\begin{eqnarray*}
    \blang \tau_1(\mathsf{p}_1) \lambda_g \lambda_{g-2}  \brang_{g,0}^{\pi,\circ} & = &
    \int_{[\Mbar_{g,1} \times \Mbar_{1,1}]^{vir}}\psi_1 \lambda_g \lambda_{g-2} \\
    & = & \int_{\Mbar_{g,1} \times \Mbar_{1,1}}\psi_1 \lambda_g \lambda_{g-2}
    \cdot \mathsf{e}(\mathbb E_g^\vee \otimes \mathbb{T}) \\
    &  = & (-1)^g \int_{\Mbar_{g,1} \times \Mbar_{1,1}} \psi_1\lambda_g\lambda_{g-2}(\lambda_g +\lambda_{g-1}\boxtimes\psi)\\
    &= &\frac{(-1)^g(2g-2)}{24}\frac{|B_{2g}||B_{2g-2}|}{4g(2g-2)(2g-2)!} \\
&= &\frac{(-1)^g}{24}\frac{|B_{2g}|}{4g}\frac{|B_{2g-2}|}{(2g-2)!} \, .
\end{eqnarray*}
Here, $\psi_1$ denotes the cotangent line at the marking of $\Mbar_{g,1}$.
In the third equality, we have denoted 
the dual of $c_1(\mathbb{T})$ by $\psi$, the cotangent
line at the marking of $\Mbar_{1,1}$.
In the fourth equality, we have used the dilaton equation and the 
Hodge integral evaluated in 
\cite[Theorem 4]{fphodge}. The final evaluation agrees with Theorem \ref{Equiv2} since the
Eisenstein series start with 1,
\begin{equation} \label{eisen}
E_{2g}(Q) = 1 - \frac{4g}{B_{2g}} \sum_{n=1}^\infty \sigma_{2g-1}(n) Q^n\, .
\end{equation}

\noindent $\bullet$
For $g= 1$, we evaluate the $n=0$ term by:
\begin{eqnarray*}
    \blang \tau_1(\mathsf{p}_1)  \brang_{1,0}^{\pi,\circ} & = &
    \int_{[\Mbar_{1,1} \times \Mbar_{1,1}]^{vir}}\psi_1  \\
    & = & \int_{\Mbar_{1,1} \times \Mbar_{1,1}}\psi_1 
    \cdot \mathsf{e}(\mathbb E_1^\vee \otimes \mathbb{T}) \\
    &  = & (-1) \int_{\Mbar_{1,1} \times \Mbar_{1,1}} \psi_1 (\lambda_1 +\lambda_0\boxtimes\psi)\\
    &= &-\frac{1}{576}\,  ,
\end{eqnarray*}
which agrees with the constant term on the right side of \eqref{ggg111}.

\subsection{The \texorpdfstring{$g=1$}{} case} 
For $g=1$ and $n\geq 1$, we can evaluate the integral
 $\blang \tau_1(\mathsf{p}_1)  \brang_{1,n}^{\pi,\circ}$ by hand. There are no branch points,
 so the cotangent line on the domain is pulled-back from the cotangent line
 of $\Mbar_{1,1}$. Using the latter cotangent line class,
 we can express $\blang \tau_1(\mathsf{p}_1)  \brang_{1,n}^{\pi,\circ}$
 as $\frac{1}{24}$ times a Gromov-Witten invariant
of maps to a fixed elliptic target $(E,p_1)$:
$$\blang \tau_1(\mathsf{p}_1)  \brang_{1,n}^{\pi,\circ} = \frac{1}{24} 
  \blang \tau_0({p}_1) 
  \brang_{1,n}^{E,\circ} \, .$$
Using the well-known evaluation
  $$\sum_{n=0}^\infty \blang \tau_0({p}_1)  \brang_{1,n}^{E,\circ} =
  -\frac{1}{24} E_2(Q)\, ,$$
  we deduce \eqref{ggg111} for all{\footnote{The $n=0$ term $\blang \tau_1(\mathsf{p}_1)  \brang_{1,0}^{\pi,\circ}$ also matches
 $\frac{1}{24} 
  \blang \tau_0({p}_1) 
  \brang_{1,0}^{E,\circ}$.}}
  $n\geq 1$.

\subsection{Proof of Theorem \ref{Equiv2} for \texorpdfstring{$g\geq 2$}{} and \texorpdfstring{$n>0$}{}}

\label{eeeeee}
Let $S$ be an  elliptically fibered $K3$ surface $S$ with a section{\footnote{
The section is denoted by $\mathsf{p}_1$ following the conventions related to families of points curves. The class of $\mathsf{p_1}$ is a {\em divisor} class
in the total space $S$ of the family.}}
$\mathsf{p}_1$, 
\begin{equation*}
\xymatrix{
S\ar[r]^{\pi_S} & \mathbb{P}^1\ar@/{}^{1pc}/[l]^{\mathsf{p}_1}\, ,
}    
\end{equation*}
and $24$ nodal fibers $R_1, \ldots, R_{24}\subset S$.
The fibers of $\pi_S$ are $1$-pointed genus $1$ stable curves. The map
$$\mathbb{P}^1\longrightarrow \overline{\mathcal M}_{1,1}$$
induced by $\pi_S$ is of degree $48$. Therefore,
\begin{equation*}
\int_{[\overline{\mathcal M}^\circ_{g,1}(\pi,n)]^{vir}}\tau_1(\mathsf{p}_1) \lambda_{g}\lambda_{g-2}=\frac{1}{48}\int_{[\overline{\mathcal M}^\circ_{g,1}(\pi_S,n)]^{vir}}\tau_1(\mathsf{p}_1) \lambda_{g}\lambda_{g-2}.
\end{equation*}
The moduli space of stable maps to the fibers
of 
$\pi_S: S\rightarrow \mathbb{P}^1$
lies over $\mathbb {P}^1$,
$$\epsilon_S: \Mbar^\circ_{g,1}(\pi_S,n) \rightarrow \mathbb{P}^1\, .$$


\begin{prop}\label{prop:virtual_class_comparison}
The following vanishing holds for $g\geq 2$ and $n>0$:
    $$
\int_{[\overline{\mathcal M}^\circ_{g,1}(\pi_S,n)]^{vir}}\tau_1(\mathsf{p_1})\lambda_{g-2} \cdot  
    \mathsf{e}\left(\mathbb E_g^\vee \otimes 
    \epsilon_S^*(\mathsf{Tan}_{\mathbb P^1})\right) = 0\,.$$
\end{prop}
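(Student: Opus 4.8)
The strategy is to recognize the integrand as (a piece of) the virtual class of the moduli space of stable maps to the \emph{total space} of the $K3$ surface $S$, and then to invoke the well-known vanishing of reduced/ordinary Gromov--Witten invariants of $K3$ surfaces in positive curve classes. Concretely, the factor $\mathsf{e}\left(\mathbb{E}_g^\vee \otimes \epsilon_S^*(\mathsf{Tan}_{\mathbb{P}^1})\right)$ should be identified, via the standard comparison of obstruction theories, with the correction term relating the $\pi_S$-relative virtual class on $\overline{\mathcal{M}}^\circ_{g,1}(\pi_S,n)$ to the \emph{absolute} virtual class of $\overline{\mathcal{M}}^\circ_{g,1}(S,n\mathsf{f})$, where $\mathsf{f}$ is the fiber class. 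The point is that a family of maps to the fibers of $\pi_S$ together with a (virtual) choice of normal direction in $\mathbb{P}^1$ is exactly a map to $S$ in a fiber class; the Euler class of $\mathbb{E}_g^\vee \otimes \epsilon_S^*(\mathsf{Tan}_{\mathbb{P}^1})$ accounts for the $H^1$ of the map composed with the deformations of $\mathbb{P}^1$, i.e.\ it is precisely the difference between the two obstruction theories (this is the same mechanism as in the $n=0$ computation above, where the obstruction bundle was $\mathbb{E}_g^\vee \otimes \mathbb{T}$).

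The key steps, in order, are: (i) set up the cartesian/forgetful diagram relating $\overline{\mathcal{M}}^\circ_{g,1}(\pi_S,n)$, $\overline{\mathcal{M}}^\circ_{g,1}(S,n\mathsf{f})$, and $\mathbb{P}^1$, and write down the two perfect obstruction theories; (ii) prove the comparison of virtual classes
$$
\left[\overline{\mathcal{M}}^\circ_{g,1}(S,n\mathsf{f})\right]^{vir} \;=\; \left[\overline{\mathcal{M}}^\circ_{g,1}(\pi_S,n)\right]^{vir_{\pi_S}} \cap \, \mathsf{e}\!\left(\mathbb{E}_g^\vee \otimes \epsilon_S^*(\mathsf{Tan}_{\mathbb{P}^1})\right),
$$
which follows from the excess intersection formula applied to the exact triangle of cotangent complexes for $S \to \mathbb{P}^1$ (the relative dualizing sheaf of $\pi_S$ is the issue; note $\mathsf{Tan}_{\mathbb{P}^1}$ pulls back, and its $R\pi_{*}$ along the map contributes $\mathbb{E}_g^\vee \otimes \epsilon_S^*\mathsf{Tan}_{\mathbb{P}^1}$ in cohomological degree one since the domain has genus $g$ and the map to $\mathbb{P}^1$ is constant); (iii) rewrite the left-hand integral of the proposition as
$$
\int_{\left[\overline{\mathcal{M}}^\circ_{g,1}(S,n\mathsf{f})\right]^{vir}} \tau_1(\mathsf{p}_1)\,\lambda_{g-2};
$$
(iv) observe that $n\mathsf{f}$ is a nonzero effective curve class on the $K3$ surface $S$ and invoke the vanishing of the \emph{ordinary} (non-reduced) virtual class of stable maps to a $K3$ in any positive class — equivalently, that $\left[\overline{\mathcal{M}}^\circ_{g,1}(S,n\mathsf{f})\right]^{vir} = 0$ because the holomorphic symplectic form on $S$ produces a trivial quotient of the obstruction sheaf (the standard argument of Maulik--Pandharipande, using that $H^{2,0}(S)\neq 0$ forces the cosection / surjection onto a trivial bundle, killing the class).

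The main obstacle is step (ii): making the comparison of the relative and absolute virtual classes precise and rigorous. One must be careful that the ``$\pi_S$-relative'' theory here is the theory of maps to the \emph{fibers} of a \emph{family} over $\mathbb{P}^1$ (so the base $\mathbb{P}^1$ is genuinely there and contributes), not the theory relative to a divisor; the bookkeeping of which piece of $R^\bullet\pi_{*}(\cdot)$ lands in the relative obstruction theory versus the excess bundle needs the constancy of the fiber map to $\mathbb{P}^1$ and a clean statement of base change. I would either cite the families virtual-class formalism of \cite{bp} directly (which is set up precisely for fibrations of this type) or reduce to it, and then the vanishing in step (iv) is immediate from the $K3$ geometry. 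Once (ii) is in place the rest is formal, and the factor of $\tfrac{1}{48}$ together with the integrand $\tau_1(\mathsf{p}_1)\lambda_g\lambda_{g-2}$ from the discussion preceding the proposition reappears to yield the desired consequence for $\overline{\mathcal{M}}^\circ_{g,1}(\pi,n)$ in the subsequent argument (since $\lambda_g \cdot \mathsf{e}(\mathbb{E}_g^\vee \otimes L) = \lambda_g\cdot(\lambda_g + \lambda_{g-1}c_1(L)) = \lambda_g\cdot\lambda_{g-1}c_1(L)$ when $L$ is a line bundle, matching the shape of the Euler-class expansion used in the $n=0$ case).
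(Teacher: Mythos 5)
Your proposal follows exactly the route that the paper names as the \emph{motivation} for the proposition and then explicitly declines to pursue: the authors state that relating $\mathsf{e}\left(\mathbb{E}_g^\vee\otimes\epsilon_S^*(\mathsf{Tan}_{\mathbb{P}^1})\right)\cap[\overline{\mathcal M}^\circ_{g,1}(\pi_S,n)]^{vir}$ to the virtual class of maps to $S$ ``requires a subtle study of the logarithmic degeneration formula,'' and they prove the vanishing ``via another path.'' The concrete gap is your step (ii). The space $\overline{\mathcal M}^\circ_{g,1}(\pi_S,n)$ parametrizes relative stable maps to the \emph{fibers} of $\pi_S$, and over each of the $24$ nodal fibers $R_i$ the target is allowed to expand (into circuits of rational curves); the space $\overline{\mathcal M}^\circ_{g,1}(S,n\mathsf{f})$ has no such expansions. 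The two moduli spaces therefore do not coincide over the nodal fibers, so the identity you write in step (ii) is not even an equality of classes on a common space --- making it precise requires deformation to the normal cones of the $R_i$ and the degeneration formula, which is exactly the work you defer to ``citing \cite{bp} directly'' (that reference does not supply this comparison for an elliptic fibration with nodal fibers). Acknowledging this as ``the main obstacle'' does not close it; as written, the proof is incomplete at its central step.

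The paper's actual argument is much more elementary and entirely avoids the $K3$ vanishing: it shows the Euler class itself vanishes as a cohomology class on $\overline{\mathcal M}^\circ_{g,1}(\pi_S,n)$. Since $\omega_{\pi_S}\cong\pi_S^*(\mathsf{Tan}_{\mathbb{P}^1})$, pulling back relative differentials along the universal map gives a morphism $\epsilon_S^*(\mathsf{Tan}_{\mathbb{P}^1})\to\mathbb{E}_g$ of sheaves on the moduli space, which is \emph{injective} precisely because $n>0$. The resulting short exact sequence with quotient a rank $g-1$ bundle $\mathbb{F}$ yields the factorization $\lambda_g=2\,\epsilon_S^*([\mathsf{pt}])\cdot\lambda_{g-1}$, and substituting this into the two-term expansion $\mathsf{e}\left(\mathbb{E}_g^\vee\otimes\epsilon_S^*(\mathsf{Tan}_{\mathbb{P}^1})\right)=(-1)^g\lambda_g+(-1)^{g-1}\lambda_{g-1}\cdot 2\,\epsilon_S^*([\mathsf{pt}])$ makes the two terms cancel identically. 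If you want to salvage your approach, you would need to carry out the logarithmic degeneration analysis over the nodal fibers; otherwise the injection $\epsilon_S^*(\mathsf{Tan}_{\mathbb{P}^1})\hookrightarrow\mathbb{E}_g$ is the idea your argument is missing.
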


The motivation for Proposition \ref{prop:virtual_class_comparison} comes from
the vanishing of Gromov-Witten invariants
for $K3$ surfaces in non-zero curve classes.
The Euler class 
$\mathsf{e}\left(\mathbb E_g^\vee \otimes 
    \epsilon_S^*(\mathsf{Tan}_{\mathbb P^1})
    \right)$
in the
integrand
relates the families virtual class for $\pi_S$
to the virtual class for maps to $S$.
A proof using deformation to the normal cones
of the nodal fibers $R_i\subset S$ requires a subtle
study of the logarithmic degeneration formula.
We will prove the vanishing of Proposition
\ref{prop:virtual_class_comparison} via
another path in Section \ref{sec:virtual_class_comparison}.

Proposition \ref{prop:virtual_class_comparison} allow us to exchange the families
Hodge integral
$$\int_{[\overline{\mathcal M}^\circ_{g,1}(\pi_S,n)]^{vir}}\tau_1(\mathsf{p}_1) \lambda_{g}\lambda_{g-2}$$
for a fixed target Gromov-Witten invariant. We first expand the Euler class as
\begin{equation} \label{lddr}
\mathsf{e}\left(\mathbb E_g^\vee \otimes 
    \epsilon_S^*(\mathsf{Tan}_{\mathbb P^1})\right)
=(-1)^g\lambda_g+(-1)^{g-1}\lambda_{g-1}\cdot \epsilon_S^*(2[\mathsf{pt}])\, ,
\end{equation}
where $[\mathsf{pt}]\in \mathsf{CH}_0(\mathbb{P}^1)$ is the point class.
By Proposition \ref{prop:virtual_class_comparison}, we obtain
\begin{equation*}
\int_{[\overline{\mathcal M}^\circ_{g,1}(\pi_S,n)]^{vir}}\tau_1(\mathsf{p_1})\lambda_{g-2} 
\cdot \Big( (-1)^g\lambda_g+(-1)^{g-1}\lambda_{g-1}\cdot 2\epsilon_S^*([\mathsf{pt}]))\Big)
= 0\, .
\end{equation*}
Hence,
\begin{eqnarray*}
\int_{[\overline{\mathcal M}^\circ_{g,1}(\pi_S,n)]^{vir}}\tau_1(\mathsf{p_1})\lambda_g\lambda_{g-2}& = & 
2 \int_{[\overline{\mathcal M}^\circ_{g,1}(\pi_S,n)]^{vir}} \tau_1(\mathsf{p_1})\lambda_{g-1}\lambda_{g-2} \cdot [\mathsf{pt}] \\
& = & 
2\int_{[\overline{\mathcal M}^\circ_{g,1}(E,n)]^{vir}}\tau_1(p_1)\lambda_{g-1} \lambda_{g-2}\, ,
\end{eqnarray*}
where the last integral is the Gromov-Witten invariant with a fixed elliptic curve
target $(E,p_1)$.

The evaluation of the required integral for $(E,p_1)$ follows from the methods
of \cite{OP23},
\begin{equation}\label{eqn:fixed_ell_eval}
\int_{[\overline{\mathcal M}^\circ_{g,1}(E,n)]^{vir}}\tau_1(p_1)\lambda_{g-1} \lambda_{g-2} = \frac{|B_{2g-2}|\sigma_{2g-1}(n)}{(2g-2)!}\, ,
\end{equation}
as we will explain in Section \ref{sec:general_hodge}.
The integral can also be obtained using the study of the Gromov-Witten theory of target curves \cite{op_curve1} and was first calculated by Pixton in \cite{Pix}.

Theorem \ref{Equiv2} then follows from 
$$\int_{[\overline{\mathcal M}^\circ_{g,1}(\pi,n)]^{vir}}\tau_1(\mathsf{p}_1) \lambda_{g}\lambda_{g-2}
=\frac{1}{24} \frac{|B_{2g-2}|\sigma_{2g-1}(n)}{(2g-2)!}\, $$
and the definition of the Eisenstein series \eqref{eisen}. \qed

\subsection{Proof of Proposition \ref{prop:virtual_class_comparison}}\label{sec:virtual_class_comparison}
The moduli space of stable maps to the fibers
of 
$\pi_S: S\rightarrow \mathbb{P}^1$
lies over $\mathbb {P}^1$,
$$\epsilon_S: \Mbar^\circ_{g,1}(\pi_S,n) \rightarrow \mathbb{P}^1\, .$$
The universal curve $\mu: \mathcal{C}
\rightarrow \overline{\mathcal M}^\circ_{g,1}(\pi_S,n)$
carries a universal evaluation map 
\[
\begin{tikzpicture}[scale=2, every node/.style={font=\small}]
  \node (C) at (0,1.3) {$\mathcal{C}$};
  \node (T) at (1.5,1.3) {$\mathcal{T}$};
  \node (M) at (0.75,0) {$\overline{\mathcal{M}}^\circ_{g,1}(\pi_S,n)$};

  \draw[->] (C) -- (T) node[midway, above] {$f$};
  \draw[->] (C) -- (M) node[midway, left] {$\mu$};
  \draw[->] (T) -- (M) node[midway, right] {$\nu$};
\end{tikzpicture}
\]
to the universally expanded target
\[
\begin{tikzpicture}[scale=2, every node/.style={font=\small}]
  \node (C) at (0,1.3) {$\mathcal{T}$};
  \node (T) at (1.5,1.3) {$\overline{\mathcal M}^\circ_{g,1}(\pi_S,n)\times _{\mathbb{P}^1} S$};
  \node (M) at (0.75,0) {$\overline{\mathcal{M}}^\circ_{g,1}(\pi_S,n)\, .$};

  \draw[->] (C) -- (T) node[midway, above] {$h$};
  \draw[->] (C) -- (M) node[midway, left] {$\nu$};
  \draw[->] (T) -- (M) node[midway, right] {$\pi_M$};
\end{tikzpicture}
\]
The target $\mathcal{T}$ is a family of elliptic
curves over $\overline{\mathcal M}^\circ_{g,1}(\pi_S,n)$
with possible expansion over the
24 nodal fibers $R_i$. The only permitted
expansions over $R_i$ are simple circuits of rational curves.

The relative dualizing sheaf $\omega_{\pi_S}$ of the of elliptic fibration
$\pi_S: S\rightarrow \mathbb{P}^1$
is pulled-back from the base, $$\omega_{\pi_S} \cong  \pi_S^*(\mathsf{Tan}_{\mathbb{P}^1})\, .$$
Since $g$ is an isomorphism (except for
collapsing chains of unstable rational curves
to nodes), 
$$\omega_\nu \, \cong\,  
h^* \omega_{\pi_M} \, \cong\, \nu^* \epsilon_S^*(\mathsf{Tan}_{\mathbb{P}^1}) 
\, .$$
We therefore obtain $\nu_*\omega_\nu \cong
\epsilon_S^*(\mathsf{Tan}_{\mathbb{P}^1})$.

Consider next the stabilization map
\[
\begin{tikzpicture}[scale=2, every node/.style={font=\small}]
  \node (C) at (0,1.3) {$\mathcal{C}$};
  \node (T) at (1.5,1.3) {$\mathcal{C}_{\mathsf{st}}$};
  \node (M) at (0.75,0) {$\overline{\mathcal{M}}^\circ_{g,1}(\pi_S,n)\, .$};

  \draw[->] (C) -- (T) node[midway, above] {$\mathsf{st}$};
  \draw[->] (C) -- (M) node[midway, left] {$\mu$};
  \draw[->] (T) -- (M) node[midway, right] {$\mu_{\mathsf{st}}$};
\end{tikzpicture}
\]
By the geometry of relative stable maps, the contraction $\mathsf{st}$ is an isomorphism (except for
collapsing chains of unstable rational curves
to nodes). Therefore, 
$$\omega_\mu \cong \mathsf{st}^*\omega_{\mu_{\mathsf{st}}}, $$
and we obtain $\mu_*\omega_\mu \cong \mathbb{E}_g$.

Since the moduli space $\overline{\mathcal M}^\circ_{g,1}(\pi_S,n)$ parametrizes relative maps to the
fibers of $\pi_S$, we have a pull-back map
\begin{equation} \label{injinj}
\epsilon_S^*(\mathsf{Tan}_{\mathbb{P}^1}) \cong
\nu_*\omega_{\nu} \ \stackrel{f^*}{\longrightarrow}\  \mu_*\omega_{\mu} \cong \mathbb{E}_g
\end{equation}
over $\overline{\mathcal M}_{g,1}(\pi_S,n)$.
Since $n>0$, the map \eqref{injinj} is injective,
so we obtain an exact sequence 
\begin{equation} \label{vrrf}
0 \rightarrow
\epsilon_S^*(\mathsf{Tan}_{\mathbb{P}^1}) \rightarrow
\mathbb{E}_g
\rightarrow
\mathbb{F} \rightarrow 0\, ,
\end{equation}
where $\mathbb{F}$ is a rank $g-1$ vector bundle on
$\overline{\mathcal M}^\circ_{g,1}(\pi_S,n)$.
We therefore have a factorization
\begin{equation}
\lambda_g  \ = \ c_1(\epsilon_S^*\big(\mathsf{Tan}_{\mathbb{P}^1})\big)\cdot c_{g-1}(\mathbb{F})  
\ = \ 2 \epsilon_S^*(\mathsf{pt}) \cdot \lambda_{g-1}\, 
\label{faccc}
\end{equation}
on 
$\overline{\mathcal M}_{g,1}(\pi_S,n)$.
For the second equality, the restriction of
\eqref{vrrf} to a fiber of $\epsilon_S$ is
used  to obtain 
$$c_{g-1}(\mathbb{F})|_{\epsilon_S^{-1}(\mathsf{pt})}=\lambda_{g-1}\, .$$
The factorization \eqref{faccc} of $\lambda_g$
implies the
vanishing
$$
\int_{[\overline{\mathcal M}^\circ_{g,1}(\pi_S,n)]^{vir}}\tau_1(\mathsf{p_1})\lambda_{g-2} \cdot  
    \mathsf{e}(\mathbb E_g^\vee \otimes \mathsf{Tan}_{\mathbb P^1}) = 0\ $$
by \eqref{lddr}. \qed

\vspace{8pt}

\subsection{Hodge integrals and Gromov-Witten theory for a fixed elliptic target}\label{sec:general_hodge}

The proof of Proposition \ref{prop:virtual_class_comparison}
yields
a stronger statement for $n>0$:
$$
    \mathsf{e}(\mathbb E_g^\vee \otimes \mathsf{Tan}_{\mathbb P^1})\cap [\overline{\mathcal M}^\circ_{g,r}(\pi_S,n)]^{vir} =0
$$
in $\operatorname{\mathsf{CH}}^*(\Mbar_{g,r})$. If $\mathsf{F}(\lambda)$ is
    a homogeneous{\footnote{The class $\lambda_i$ has degree $i$.}} 
    polynomial in Hodge classes
    $\lambda_i$  satisfying
    $$ \sum_{i=1}^r k_i 
+    \text{deg}(\mathsf{F}) 
    = g-1\, .
    $$
we obtain (as in Section \ref{eeeeee}):
\begin{equation}\label{eqn:moving_fixed_general}
    \left\langle
\prod_{i=1}^m \tau_{k_i}(\mathsf{p_1}) \prod_{j=m+1}^r \tau_{k_j+1}(1)\cdot
 \lambda_{g}\mathsf{F}(\lambda)
\right\rangle_{g,n}^{\pi,\circ}=\frac{1}{24} 
\left\langle
\prod_{i=1}^m \tau_{k_i}(p_1)
\cdot \prod_{j=m+1}^r \tau_{k_j+1}(1)\cdot \lambda_{g-1} 
\mathsf{F}(\lambda)\right\rangle_{g,n}^{E,\circ}\, ,
\end{equation}
where the $n=0$ case follows from an application
of Mumford's formula.

The  Gromov-Witten
Hodge integral of a fixed elliptic curve target $(E,p_1)$ on the right side can be
effectively computed. By \cite{OP23}, we have an equality of cycles
$$
\sum_{n=0}^{\infty }Q^n [\Mbar^\circ_{g,m}(E,n)]^{vir}\cdot \prod_{i=1}^m\tau_0(\mathsf{p}_1) \cdot \lambda_{g-1} = \frac{(-1)^g(2g-1)!}{(2g-2+m)!}\lambda_g\lambda_{g-1}\sum_{i=1}^m\prod_{j\neq i} \psi_j\left( Q\frac{d}{dQ}\right)^{m-1} E_{2g}(Q)
$$
in $H^{*}(\Mbar_{g,m})$. Therefore, the right hand side of \eqref{eqn:moving_fixed_general} is equal to the coefficient of $Q^n$ in
$$
\frac{(-1)^g(2g-1)!}{24(2g+m-2)!}\left( Q\frac{d}{dQ}\right)^{m-1} E_{2g}(Q)\sum_{i=1}^m\int_{\Mbar_{g,r}} \prod_{j=1}^r\psi_j^{k_j + 1-\delta_{ij}}\cdot \mathsf{F}(\lambda)\lambda_g \lambda_{g-1}\, .
$$
The integrals over $\Mbar_{g,r}$ can be evaluated effectively via
Hodge integral techniques.

\vspace{8pt}
\noindent $\bullet$ For $\mathsf{F} =1$, we have the exact evaluation
$$
\int_{\Mbar_{g,r}} \prod_{j=1}^r\psi_j^{k_j + 1-\delta_{ij}}\cdot\lambda_g \lambda_{g-1} =\frac{|B_{2g}|}{2^{2g-1}(2g)!}\frac{(2g+r-3)!(2k_i+1)}{(2k_1+1)!!\ldots (2k_r+1)!!}\, ,
$$
given by the Virasoro constraints of $\mathbb P^2$ \cite{GetP}. We arrive at the following:
\begin{equation*}
    \sum_{n=0}^\infty Q^n\left\langle
\prod_{i=1}^m \tau_{k_i}(\mathsf{p_1}) \prod_{j=m+1}^r \tau_{k_j+1}(1)
\cdot \lambda_{g}
\right\rangle_{g,n}^{\pi,\circ}=C\frac{(-1)^g}{24}\frac{|B_{2g}|}{4g}\left( Q\frac{d}{dQ}\right)^{m-1} E_{2g}(Q)\, ,
\end{equation*}
where
$$
C=\frac{(2g+r-3)!\sum_{i=1}^{m}(2k_i+1)}{2^{2g-2}(2g+m-2)!(2k_1+1)!!\ldots (2k_r+1)!!}\, .
$$

\vspace{8pt}
\noindent $\bullet$
For $\mathsf{F}= \lambda_{g-2}$ and $r=m=1$,  we obtain \eqref{eqn:fixed_ell_eval}:
$$
\langle \tau_1(p_1)\lambda_{g-1}\lambda_{g-2}\rangle^{E,\circ}_{g,n} = (-1)^{g-1}\frac{4g }{B_{2g}}\sigma_{2g-1}(n)\int_{\Mbar_{g,1}}\psi_1 \lambda_g \lambda_{g-1}\lambda_{g-2} = \frac{|B_{2g-2}|}{(2g-2)!}\sigma_{2g-1}(n)\, .
$$

\vspace{8pt}
\noindent Further development of these ideas will appear in \cite{IPT}.

\section{The invariant \texorpdfstring{$\blang D \brang_{1}^{\hilbnc}$}{}}

\subsection{Overview}
We present here the proof of Theorem \ref{Equiv1}:\begin{equation*}
\big\langle \, D\,  \big\rangle_{1}^{\mathsf{Hilb}^n(\mathbb{C}^2)} = 
-\frac{1}{24} \frac{(t_1+t_2)^2}{t_1t_2}
\left( \mathsf{Tr}_n + \sum_{k=2}^{n-1} {\sigma_{-1}(n-k)} \mathsf{Tr}_k \right) \, .
\end{equation*}
Our strategy is to convert
the invariant $\big\langle \, D\,  \big\rangle_{1}^{\mathsf{Hilb}^n(\mathbb{C}^2)}$
to a families Hodge integral. After several
steps related to the connected/disconnected calculus and descendent/relative correspondence, we will show that Theorem \ref{Equiv1} follows from Theorem \ref{Equiv2}.

\subsection{Connected/disconnected calculus}\label{sec:conn_disconn_calc}
The genus 1 Gromov-Witten invariants of ${\Hilb}$ are expressed in terms of families invariants
with 
possibly {\em disconnected}{\footnote{The superscript $\bullet$ indicates possibly {\em disconnected} domain curves (but no connected
component of the domain is contracted to a point).}}
domain curves
by 
by (\ref{eqn:hilb_curve_g1}):
$$
\la \,D\,\ra_1^{\Hilb}=-\la (2,1^{n-2})\ra_1^{\Hilb}=\frac{1}{i}\la (2,1^{n-2})\ra^{\pi_{\CC^2},\bullet}\, .
$$
We will transform $\la (2,1^{n-2})\ra^{\pi_{\CC^2},\bullet}$ to
invariants with {\em connected} domain curves. The connected/dis\-connected
correspondence is well-known for the Gromov-Witten theory of a fixed target. For families invariants, new aspects appear.

There are two connected Gromov-Witten invariants ((i) and (iii)) and two partition counts ((ii) and (iv)) 
which occur in the connected/disconnect calculus for $\la (2,1^{n-2})\ra^{\pi_{\CC^2},\bullet}$:

\vspace{8pt}
\noindent ({\bf{i}}) Let $\overline{\mathcal M}^\circ_g(\pi,(2,1^{n-2}))$ be the moduli space of degree $n$ stable relative
maps to the fibers of the universal elliptic curve
$$\pi: \mathcal{E} \rightarrow 
\overline{\mathcal{M}}_{1,1}$$
with connected{\footnote{The superscript $\circ$ denotes connected domains.}} 
domains of genus $g$ and relative condition at the marking given by $(2,1^{n-2})$. Let
\begin{equation*}
\la \lambda_{g-2}\lambda_g|(2,1^{n-2})\ra_{g,n}^{\pi,\circ}=\int_{[\overline{\mathcal M}^\circ_g(\pi,(2,1^{n-2}))]^{vir_\pi}}\lambda_{g-2}\lambda_g\, .    
\end{equation*}
The descendent/relative correspondence via the degeneration to the normal cone of the
section $\mathsf{p}_1$ yields the following result proven in Section \ref{sec:degeneration_pf}.  
\begin{prop}\label{prop:degeneration_form} For $g\geq 2$ and $n\geq 2$, we have
    \begin{equation*} 
\la \tau_1(\mathsf{p}_1)\lambda_{g-2}\lambda_g\ra_{g,n}^{\pi,\circ}=\frac{\sigma_1(n)}{24}\frac{|B_{2g-2}|}{(2g-2)!} +\la \lambda_{g-2}\lambda_g|(2,1^{n-2})\ra_{g,n}^{\pi,\circ}\, . 
\end{equation*}
\end{prop}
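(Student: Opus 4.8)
\textbf{Proof proposal for Proposition \ref{prop:degeneration_form}.}

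The plan is to degenerate the universal elliptic curve $\pi:\mathcal{E}\rightarrow\overline{\mathcal{M}}_{1,1}$ to the normal cone of the section $\mathsf{p}_1$, producing a family whose general fiber is $E$ but over the degenerate locus splits as $E\cup_{p_1} \mathbb{P}^1$, with the section $\mathsf{p}_1$ moving onto the $\mathbb{P}^1$-component (a fixed target $(\mathbb{P}^1,0,\infty)$, rubber-rigidified). On the moduli of relative stable maps, the descendent $\tau_1(\mathsf{p}_1)$ is realized geometrically by forcing the image of the marked point to meet a fixed point; under the degeneration this marked point is pushed onto the $\mathbb{P}^1$-bubble. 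First I would set up the relative geometry: the two sides of the degeneration formula are (a) the original family $\pi$ over $\overline{\mathcal{M}}_{1,1}$ with insertion $\tau_1(\mathsf{p}_1)\lambda_{g-2}\lambda_g$, and (b) a sum over splittings of genus and degree of the product of relative invariants of $(\mathcal{E},\mathsf{p}_1)$ (now with the marked point removed and replaced by a relative boundary condition) and relative invariants of the trivial $\mathbb{P}^1$-family (rubber) carrying the moved marked point and the descendent.

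Second, I would analyze which terms in the degeneration sum survive. Because $\lambda_g$ is pulled back from $\overline{\mathcal{M}}_{g,1}$ (moduli of the domain) and $\lambda_g$ vanishes on any stratum where the domain has a separating node splitting off positive genus on both sides — more precisely, the $\lambda_g\lambda_{g-1}$ and $\lambda_g$-vanishing relations of \cite{FabPan,GetP,MPS} kill contributions where genus distributes nontrivially across the node — only the two extreme splittings contribute: either all the genus $g$ (and all degree $n$) stays on the $\mathcal{E}$-side while the $\mathbb{P}^1$-rubber side carries a genus $0$, degree $0$ configuration, or (the relevant new term) the $\mathbb{P}^1$-side carries a small genus-$0$ piece. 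The relative condition over $p_1$ that is produced is exactly $(2,1^{n-2})$: the marked point, together with the rest of the degree, forces the contact profile with multiplicity pattern $(2,1^{n-2})$ when one unit of degree is ``absorbed'' by the bubble through the $\tau_1(\mathsf{p}_1)$ condition, and the simple (all $1$'s) profile otherwise. The first type of term, after integrating out the $\mathbb{P}^1$-rubber, produces the Hodge integral $\la\lambda_{g-2}\lambda_g|(2,1^{n-2})\ra_{g,n}^{\pi,\circ}$; the second type produces a purely combinatorial/Hodge-integral constant times $\sigma_1(n)$, which I would evaluate using the known rubber integral over $\overline{\mathcal{M}}_{1,1}$-type spaces together with the Hodge integral $\int_{\overline{\mathcal{M}}_{g,1}}\psi_1\lambda_g\lambda_{g-1}\lambda_{g-2}$ of \cite{fphodge,GetP}, yielding the coefficient $\frac{\sigma_1(n)}{24}\frac{|B_{2g-2}|}{(2g-2)!}$. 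The factor $\frac{1}{24}$ enters through the degree of the $\overline{\mathcal{M}}_{1,1}$ base exactly as in the $g=1$ and $n=0$ computations of Section \ref{sec:comparison_vir_pf}.

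Third, the bookkeeping: I would carefully track the rubber/rigidification (the $\mathbb{P}^1$-target has an automorphism that must be quotiented, contributing a $\psi$-class that is precisely the descendent $\tau_1$ being traded) and the fact that ``$\mathsf{p}_1$ as a divisor class'' versus ``$\mathsf{p}_1$ as an evaluation point'' differ by a $\psi$-correction — this is the standard divisor-versus-point subtlety, and it is exactly what makes the relation an \emph{equality} rather than a proportionality, with the $\sigma_1(n)$-term being the correction. The main obstacle I expect is controlling the excess-intersection/obstruction contributions in the degeneration formula when the bubble circuit is itself unstable or when $\lambda_g$ does not outright vanish but only reduces the stratum — i.e.\ verifying that no intermediate splitting contributes beyond the two I have identified, which requires the precise vanishing $\lambda_g\lambda_g = 0$ on nodal strata and a dimension count showing the families virtual dimension leaves no room for extra insertions. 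Once those vanishings are in place, the identity falls out by assembling the two surviving terms.
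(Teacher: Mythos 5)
Your overall strategy --- degeneration to the normal cone of the section $\mathsf{p}_1$, the relative degeneration formula, and the use of Mumford's relation $\lambda_{g-1}^2=2\lambda_g\lambda_{g-2}$ together with $\lambda_g$-vanishing to prune the splitting types --- is exactly the paper's approach. However, there is a genuine error in your identification of the surviving configurations, and it lands you on the wrong evaluation of the $\sigma_1(n)$-term. You assert that in both surviving splittings the $\mathbb{P}(T_{\mathsf{p}_1}\mathcal{E}\oplus\mathbb{C})$-bubble carries only genus $0$. That is true for the configuration producing $\la \lambda_{g-2}\lambda_g|(2,1^{n-2})\ra_{g,n}^{\pi,\circ}$, but the configuration responsible for the term $\frac{\sigma_1(n)}{24}\frac{|B_{2g-2}|}{(2g-2)!}$ distributes the genus the other way: since $\lambda_g\lambda_{g-2}$ restricted to a two-vertex compact-type stratum splits as $\lambda_{g_1}\lambda_{g_1-1}\boxtimes\lambda_{g_2}\lambda_{g_2-1}$, and the families invariant $\la\lambda_{g_1}\lambda_{g_1-1}|(1^{k})\ra_{g_1,k}^{\pi,\circ}$ vanishes unless $g_1=1$ (Lemma \ref{lem:1^d_evaluation}), the only other surviving graph has a \emph{genus $1$} vertex on the $\mathcal{E}$-side with profile $(1^n)$, contributing $\la\lambda_1\lambda_0|(1^n)\ra_{1,n}^{\pi,\circ}=\frac{\sigma_1(n)}{24n}$ (counted $n$ times), and a \emph{genus $g-1$} vertex on the bubble carrying $\tau_1(\infty)\lambda_{g-1}\lambda_{g-2}$. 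This is the left-hand graph of Figure \ref{fig:deg_contributions}. That bubble vertex is evaluated by reducing to a fixed $\mathbb{P}^1$ and localizing, giving
\begin{equation*}
\la \tau_1(\infty)\lambda_{g-1}\lambda_{g-2}\ra_{g-1,1}^{\mathbb{P}^1}=(2g-2)\int_{\Mbar_{g-1,1}}\frac{\lambda_{g-1}\lambda_{g-2}\, c(\mathbb{E}^\vee)}{1-\psi}=\frac{|B_{2g-2}|}{(2g-2)!}\, ,
\end{equation*}
a genus $g-1$ Hodge integral. The integral $\int_{\Mbar_{g,1}}\psi_1\lambda_g\lambda_{g-1}\lambda_{g-2}$ you propose to use instead is the one relevant for the $n=0$ constant term; it carries a factor of $|B_{2g}|$ which does not appear in the claimed coefficient, so your evaluation cannot close.

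Two further points. First, the source of $\sigma_1(n)$ is not a "divisor-versus-point $\psi$-correction" as you suggest: it is the honest degeneration contribution of the genus $1$ families vertex on the $\mathcal{E}$-side just described, and locating it requires the vanishing $\la\lambda_{g_1}\lambda_{g_1-1}|(1^{k})\ra_{g_1,k}^{\pi,\circ}=0$ for $g_1>1$, which is a statement about \emph{families} invariants (proved via the injection $\epsilon^*\mathbb{E}_1\hookrightarrow\mathbb{E}_{g_1}$ and the resulting factorization of $\lambda_{g_1}$) and is not covered by the fixed-target $\lambda_g$-vanishing you invoke. Second, the bubble here is the $\mathbb{P}^1$-bundle $\mathbb{P}(T_{\mathsf{p}_1}\mathcal{E}\oplus\mathbb{C})$ over $\overline{\mathcal{M}}_{1,1}$ treated as a relative target along its $0$-section, with $\tau_1$ an honest descendent at the $\infty$-section; no rubber rigidification or trading of $\tau_1$ for a rubber $\psi$-class is needed, and introducing it obscures the bookkeeping of the $2[\mathsf{pt}]$ multiplicity that cancels the $1/2$ from the genus $0$ double-cover vertex.
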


Since $\la \tau_1(\mathsf{p}_1)\lambda_{g-2}\lambda_g\ra_{g,n}^{\pi,\circ}$
was calculated in Theorem \ref{Equiv2}, Proposition \ref{prop:degeneration_form}
completely determines the invariant $\la \lambda_{g-2}\lambda_g|(2,1^{n-2})\ra_{g,n}^{\pi,\circ}$.

The $g=1$ case takes a special form. For $n\geq 2$, we easily obtain
\begin{equation*}
\la \tau_1(\mathsf{p}_1)\ra_{1,n}^{\pi,\circ}=\frac{\sigma_1(n)}{24}\frac{|B_{0}|}{0!} +\la (2,1^{n-2})\ra_{1,n}^{\pi,\circ}\, .    
\end{equation*}
Using the evaluation \eqref{ggg111} of $\la \tau_1(\mathsf{p}_1)\ra_{1,n}^{\pi,\circ}$, we see
$$
\la (2,1^{n-2})\ra_{1,n}^{\pi,\circ} =0
$$
for $n\geq 2$.
 

\vspace{8pt}
\noindent ({\bf{ii}})
For an integer $l\geq 1$, 
let $\mathsf{Part}(l)$ be the number of partitions of $l$. 
Partitions arise naturally in the Gromov-Witten theory of $E$: 
 $\mathsf{Part}(l)$ is the count of possibly disconnected unramified covers of $E$ of degree $l$ where each cover is weighted by the order of the automorphism group. The corresponding generating series 
 is 
$$ \mathcal{P}(x) = 1 + \sum_{l=1}^{\infty}  \mathsf{Part}(l)\,  x^l = \prod_{l=1}^\infty \frac{1}{1-x^l}\, .$$

\vspace{8pt}
\noindent ({\bf{iii}}) Let $E$ be a fixed elliptic curve. 
We denote by 
$
\la (2,1^{n-2}) \ra_{g,n}^{E\times \mathbb{C}^2, \circ}    
$
the
connected genus $g$, degree $n$, $\T$-equivariant Gromov-Witten invariant of $E\times \mathbb{C}^2$ with relative condition at the divisor $\{p_1\}\times \mathbb{C}^2$ given by $(2, 1^{n-2})$. 
The connected/disconnected correspondence here is
\begin{equation}\label{eqn:conn_disc_fixed}
\la (2,1^{n-2}) \ra_{g,n}^{E\times \mathbb{C}^2, \bullet}=\sum_{2\leq m\leq n}\la (2,1^{m-2}) \ra_{g,m}^{E\times \mathbb{C}^2, \circ}\cdot \mathsf{Part}(n-m)\, .
\end{equation}
The disconnected invariants $\la (2,1^{n-2}) \ra_{g,n}^{E\times \mathbb{C}^2, \bullet}$ can be calculated by degenerating $E$ to a curve of arithmetic genus $1$ with a unique node and applying the  correspondence between local Gromov-Witten theory of $\mathbb{P}^1$ and quantum cohomology ring of  $\Hilb$ \cite{bp,op}:
\begin{equation}\label{eqn:GWHilb_fixed}
-\sum_{g\in \mathbb{Z}} u^{2g-3} \la (2,1^{n-2})\ra_{g,n}^{E\times \mathbb{C}^2,\bullet}=(-i)\cdot \mathsf{trace}\left(\mathsf{M}_D^{\mathsf{Hilb}^n(\mathbb{C}^2)}(q)\right)=(-i)\cdot \mathsf{Tr}_n\cdot (t_1+t_2),   \end{equation}
after $-q=e^{i u}$.

\vspace{8pt} \noindent ({\bf{iv}})
 For an integer $l\geq 1$, let $\widetilde{\mathsf{Part}}(l)$ be the count of possibly disconnected unramified covers of $E$ of degree $l$ where each cover is weighted by the order of automorphism group {\em and} the number of connected components. 
 For example,
$\widetilde{\mathsf{Part}}(1)=1$ and 
$$\widetilde{\mathsf{Part}}(2)=1+3/2=5/2\, ,$$
where $1=(1/2)\cdot 1\cdot 2$ is the contribution of the disconnected cover 
and $3/2=(1/2)\cdot 3\cdot 1$ is the contribution of the connected covers.
The generating series is
$$
\widetilde{\mathcal{P}}(x) = \sum_{l=1}^{\infty}  \widetilde{\mathsf{Part}}(l)\,  x^l\, . $$

\begin{lem}\label{lem:Ptilde_P}
    The series $\widetilde{\mathcal{P}}$ is determined by the equation 
    $\widetilde{\mathcal{P}} = \mathcal{P} \operatorname{log} \mathcal{P}$.
\end{lem}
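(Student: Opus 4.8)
The plan is to prove the identity $\widetilde{\mathcal{P}} = \mathcal{P}\log\mathcal{P}$ via the standard exponential formula relating connected and disconnected counts, keeping track of the extra weighting by the number of connected components. Recall that $\mathcal{P}(x) = \exp\!\big(\sum_{l\geq 1} p_l x^l\big)$ where $p_l x^l$ is the generating series for \emph{connected} unramified degree-$l$ covers of $E$ weighted by the order of the automorphism group; concretely $p_l = 1/l$, since a connected degree-$l$ unramified cover of an elliptic curve is a cyclic isogeny-type cover with automorphism group of order $l$, and indeed $\sum_{l\geq 1} x^l/l = -\log(1-x)$ while $\prod_{l\geq 1}(1-x^l)^{-1}$ has logarithm $\sum_{k\geq 1}\sum_{l\geq 1} x^{kl}/k = \sum_{l\geq 1} x^l/l$ after regrouping — so the bookkeeping is consistent with $\mathcal{P} = \exp(\sum p_l x^l)$. (One does not actually need the explicit value of $p_l$; only the relation $\log\mathcal{P} = \sum_{l\geq 1} p_l x^l$ is used below.)

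Next I would introduce a formal variable $t$ marking the number of connected components: set $\mathcal{P}(x,t) = \exp\!\big(t\sum_{l\geq 1} p_l x^l\big)$, which is the generating series for disconnected covers weighted by automorphisms and by $t^{(\#\text{components})}$. Then $\mathcal{P}(x,1) = \mathcal{P}(x)$ counts disconnected covers (this is the content of $(\mathbf{ii})$), while differentiating in $t$ brings down the number of components:
\begin{equation*}
\frac{\partial}{\partial t}\, \mathcal{P}(x,t) = \Big(\sum_{l\geq 1} p_l x^l\Big)\exp\!\Big(t\sum_{l\geq 1} p_l x^l\Big) = \big(\log\mathcal{P}(x)\big)\,\mathcal{P}(x,t)\, .
\end{equation*}
Evaluating at $t=1$ gives $\frac{\partial}{\partial t}\mathcal{P}(x,t)\big|_{t=1} = \mathcal{P}(x)\log\mathcal{P}(x)$.

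Finally I would identify the left-hand side with $\widetilde{\mathcal{P}}(x)$. By definition, $\widetilde{\mathsf{Part}}(l)$ is the sum over disconnected degree-$l$ covers of $\tfrac{1}{|\mathrm{Aut}|}\cdot(\#\text{components})$, which is exactly the coefficient of $x^l$ in $\frac{\partial}{\partial t}\mathcal{P}(x,t)\big|_{t=1}$. Hence $\widetilde{\mathcal{P}} = \mathcal{P}\log\mathcal{P}$, as claimed. As a sanity check one can verify the stated value $\widetilde{\mathsf{Part}}(2) = 5/2$: from $\mathcal{P} = 1 + x + 2x^2 + \cdots$ and $\log\mathcal{P} = x + \tfrac{3}{2}x^2 + \cdots$, the product $\mathcal{P}\log\mathcal{P}$ has $x^2$-coefficient $1\cdot\tfrac{3}{2} + 1\cdot 1 = \tfrac{5}{2}$.

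There is no serious obstacle here; the only point requiring a little care is making precise that the coefficient extraction commutes with the combinatorial interpretation — i.e. that "disconnected cover = unordered multiset of connected covers" is faithfully encoded by the exponential, including the automorphism weighting for repeated connected components. This is the usual exponential formula for species/symmetric functions and is routine, but I would state it explicitly rather than invoke it tacitly. The argument is otherwise purely formal manipulation of power series.
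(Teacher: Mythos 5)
Your proof is correct and follows essentially the same route as the paper: both introduce a formal variable marking the number of connected components, write the disconnected generating series as the exponential of the connected one, and recover $\mathcal{P}\log\mathcal{P}$ by differentiating at the marking variable equal to $1$. One small correction to your parenthetical aside (which, as you note, is not needed for the argument): the connected contribution is $p_l=\sigma_{-1}(l)=\sigma_1(l)/l$, not $1/l$, since there are $\sigma_1(l)$ connected degree-$l$ covers (index-$l$ sublattices of $\mathbb{Z}^2$), each with deck group of order $l$; your regrouping $\sum_{k,l\geq 1}x^{kl}/k=\sum_{l\geq 1}x^l/l$ is false (the left side is $\sum_{m}\sigma_{-1}(m)x^m$), and indeed your own sanity check correctly uses $\tfrac{3}{2}$ for the $x^2$-coefficient of $\log\mathcal{P}$.
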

\begin{proof}
    Let $\mathsf{Hur}(l,k)$ be the automorphism-weighted count
    of possibly disconnected unramified
    covers of an elliptic curve of degree $l$  with exactly $k$ connected component, and let
    $$
    \mathcal{F}(x,y) = 1 + \sum_{l=1}^\infty \sum_{k=1}^\infty  \operatorname{\mathsf{Hur}}(l,k)\,  x^l y^k\, .
    $$
    Since $\operatorname{log}(\mathcal{P}(x))$ is the generating series of connected unramified covers of an elliptic curve,
    $$
    \mathcal{F}(x,y) = \operatorname{exp}(y \operatorname{log}(\mathcal{P}(x)))\, .
    $$
    We then obtain
    $\widetilde{\mathcal{P}}(x) = \partial_y \mathcal{F} (x,y)|_{y=1} = \mathcal{P}(x) \operatorname{log}\mathcal{P}(x)$
    by the definition of $\mathsf{Hur}(l,k)$.
\end{proof}

We can now state the main connected/disconnected equation which will play an essential role
in the proof of Theorem \ref{Equiv1}. The proof will be presented in Section \ref{sec:con_discon_pf}.

\begin{prop}[Connected/disconnected calculus]\label{prop:con_discon}
For $g\geq 2$ and $n \geq 2$, we have
\begin{eqnarray*}
\la (2,1^{n-2})\ra_{g ,n}^{\pi_{\mathbb{C}^2},\bullet} &= & 
\frac{(t_1+t_2)^2}{t_1t_2}\sum_{2\leq m\leq n}\la \lambda_{g-2}\lambda_g |(2,1^{m-2})\ra_{g,m}^{\pi, \circ}\cdot\mathsf{Part}(n-m) \\
& &
-\frac{1}{24}\frac{t_1+t_2}{t_1t_2}\sum_{2\leq m\leq n}\la (2,1^{m-2}) \ra_{g,m}^{E\times \mathbb{C}^2, \circ}\cdot \widetilde{\mathsf{Part}}(n-m)\, .
\end{eqnarray*}
\end{prop}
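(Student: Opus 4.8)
The plan is to run the connected/disconnected correspondence for the family $\pi_{\CC^2}$ over $\Mbar_{1,1}$, taking care of the new feature that each connected component of the domain carries its own copy of the base moduli. First I would write the disconnected series $\la (2,1^{n-2})\ra^{\pi_{\CC^2},\bullet}$ as a sum over the ways to split a genus $n$ degree $n$ stable relative map into connected pieces: exactly one \emph{distinguished} component carries the relative condition $(2,1^{m-2})$ and meets the section, while the remaining degree $n-m$ is covered by connected components mapping to the fibers with no insertions. Over a fixed elliptic fiber $E$, an insertion-free connected component is either (a) an unramified cover of $E$, contributing combinatorially, or (b) genus $1$ collapsed to a point, which is excluded by the $\bullet$-convention. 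The subtlety is that in the \emph{family} the Hodge integrand $\lambda_{g-2}\lambda_g$ does not factor as a product over components: $\lambda_g$ of a disconnected nodal genus $g$ curve is the product of the $\lambda$'s of the pieces in top degree, but the class is pulled back from $\Mbar_{1,1}$ only once. This is exactly where the two different partition counts $\mathsf{Part}$ and $\widetilde{\mathsf{Part}}$ of items (ii) and (iv) enter.

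The key computation is to track how the Hodge class distributes. Writing the domain as a disjoint union $C_0 \sqcup C_1 \sqcup \cdots \sqcup C_k$ with $C_0$ the distinguished component, one has $\mathbb{E}_g = \bigoplus_{j} \mathbb{E}_{g_j}$ and hence $\lambda_g = \prod_j \lambda_{g_j}$ and $\lambda_{g-2} = \sum \lambda_{g_0}\cdots$ with two derivatives removed; since the covers $C_1,\ldots,C_k$ of $E$ are unramified, each such $C_j$ is itself genus $1$, so $\lambda_{g_j}=\lambda_1$ and $\lambda_{g_j - 1}=\lambda_0=1$ on that factor. Thus the only ways to get a nonzero contribution from $\lambda_{g-2}\lambda_g$ are: put all of $\lambda_{g-2}\lambda_g$ on $C_0$ (giving the families invariant $\la \lambda_{g-2}\lambda_g|(2,1^{m-2})\ra^{\pi,\circ}_{g,m}$ times $\mathsf{Part}(n-m)$, the plain automorphism-weighted cover count, producing the first term with its $\frac{(t_1+t_2)^2}{t_1t_2}$ prefactor from the two $\CC^2$-directions on the cover components); or remove exactly one power of $\lambda$ from one of the cover components $C_j$, replacing $\lambda_1(C_j)$ by the Euler class contribution that turns that factor into a fixed-target elliptic Gromov–Witten insertion. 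The latter forces $C_0$ to be the fixed-target invariant $\la (2,1^{m-2})\ra^{E\times\CC^2,\circ}_{g,m}$, introduces the factor $\frac{1}{24}$ from integration of $\lambda_1$ against $[\Mbar_{1,1}]$, the sign from $\mathbb{E}^\vee$, the single $\frac{t_1+t_2}{t_1t_2}$ prefactor, and — crucially — the count becomes $\widetilde{\mathsf{Part}}(n-m)$ because one distinguished cover component has been singled out, which is precisely the \emph{component-weighted} cover count of item (iv).

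Concretely, I would: (1) set up the splitting formula for $[\Mbar^\bullet_g(\pi_{\CC^2},(2,1^{n-2}))]^{vir_{\pi_{\CC^2}}}$ as a sum over ordered-then-symmetrized decompositions into the distinguished piece and unramified covers, using the product structure of the relative virtual class over $\Mbar_{1,1}$; (2) expand $\mathsf{e}$ of the obstruction / the Hodge classes across components and use $\lambda_{\geq 1}=0$ for the genus considerations together with the identity $\la C_j\text{-cover}\ra = $ combinatorial count to collapse the cover factors; (3) recognize the two surviving terms as generated by $\mathcal{P}$ and $\widetilde{\mathcal P}$ respectively, invoking Lemma \ref{lem:Ptilde_P} only implicitly (it is the statement $\widetilde{\mathcal P}=\mathcal P\log\mathcal P$, which reappears in the $q=0$ analysis of Section \ref{sec:modularity}); (4) match the equivariant prefactors by noting that a degree-$d$ unramified cover of $E$ contributes $\mathsf{e}(H^*(\CC^2)\text{-obstruction}) = \frac{(t_1+t_2)^2}{t_1t_2}$-type factors in the first case and one fewer such factor in the second, where the $\lambda$-removal has already absorbed one cohomological degree. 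The main obstacle I anticipate is step (2): justifying rigorously that \emph{no other} distribution of the $\lambda_{g-2}\lambda_g$ integrand across the disconnected pieces survives — i.e., that removing two powers from cover components, or one each from two components, always yields zero — and controlling the automorphism factors so that the bookkeeping produces exactly $\mathsf{Part}$ versus $\widetilde{\mathsf{Part}}$ with the stated coefficients rather than off by a combinatorial factor. This is a careful but ultimately mechanical argument once the vanishing $\lambda_{g_j}^{\,2}=0$-type input and the structure of unramified covers of $E$ are in hand.
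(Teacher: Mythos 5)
Your overall skeleton is the right one and matches the paper's: split the disconnected invariant into a distinguished component carrying $(2,1^{m-2})$ plus insertion-free components of total degree $n-m$, use per-component Euler-class factors from the $\CC^2$ directions, kill higher-genus contributions with $\lambda$-classes, and recognize the two surviving terms as generated by $\mathcal{P}$ and $\widetilde{\mathcal{P}}$. But there are two genuine problems. First, a misattribution: the left-hand side $\la (2,1^{n-2})\ra^{\pi_{\CC^2},\bullet}_{g,n}$ carries \emph{no} Hodge insertion, so there is no class $\lambda_g\lambda_{g-2}$ on the disconnected moduli space to be ``distributed'' across components. The Hodge classes arise only after $\T$-localization, as the inverse Euler class of the virtual normal bundle, which is the \emph{product} over connected components of $c(\mathbb{E}_{g_i}^\vee\otimes t_1)c(\mathbb{E}_{g_i}^\vee\otimes t_2)/t_1t_2$; the factorization over components is automatic and is not the source of any subtlety. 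Consequently your central worry in step (2) --- ruling out other distributions of $\lambda_{g-2}\lambda_g$ --- is aimed at a non-issue, while the actual mechanism is missing.

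That mechanism is the fiber-product structure: the $\T$-fixed locus is $\Mbar^{\circ}_{g_0}(\pi,(2,1^{m-2}))\times_{\Mbar_{1,1}}\Mbar^{\circ}_{g_1}(\pi,(1^{k_1}))\times_{\Mbar_{1,1}}\cdots$, because all components must map to the \emph{same} fiber of $\pi$. The paper converts this to a product via the decomposition of the diagonal $[\Delta]=\sum_j [\mathsf{pt}]\boxtimes\cdots\boxtimes 1\boxtimes\cdots\boxtimes[\mathsf{pt}]$ in $\mathsf{CH}^{s}(\Mbar_{1,1}^{\times (s+1)})$. This is what forces exactly one component to contribute a $\pi$-families invariant while every other component degenerates to a fixed-target invariant over $E$; the choice of which cover component is the ``moving'' one is precisely what weights the cover count by its number of components and produces $\widetilde{\mathsf{Part}}(n-m)$ (and the $\frac{1}{24}$, via $\la\lambda_1\lambda_0|(1^{k_j})\ra^{\pi,\circ}_{1,k_j}=\tfrac{1}{24}\la(1^{k_j})\ra^{E,\circ}_{1,k_j}$). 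Without the diagonal decomposition your dichotomy ``all of $\lambda_g\lambda_{g-2}$ on $C_0$'' versus ``remove one $\lambda$ from a cover component'' has no rigorous content. Second, you assume the insertion-free components are unramified covers, hence genus $1$; in the moduli space they may have arbitrary genus, and the required vanishing in the families direction, $\la\lambda_{g_i}\lambda_{g_i-1}|(1^{k_i})\ra^{\pi,\circ}_{g_i,k_i}=0$ for $g_i>1$, is not a formal ``$\lambda^2=0$'' statement: the paper proves it (Lemma \ref{lem:1^d_evaluation}) via the factorization $\lambda_{g_i}=\tfrac{1}{24}\epsilon^*([E])\cdot\lambda_{g_i-1}$ coming from the injection $\epsilon^*\mathbb{E}_1\hookrightarrow\mathbb{E}_{g_i}$, together with Mumford's relation $\lambda_{g_i-1}^2=2\lambda_{g_i}\lambda_{g_i-2}$ and the $\lambda_{g}$-vanishing for a fixed elliptic target. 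These two ingredients need to be supplied before the bookkeeping you describe can be carried out.
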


The $g=1$ case of the connected/disconnected calculus takes a special form. For $n\geq 2$,
\begin{equation} \label{specialg1}
\la (2,1^{n-2})\ra_{1,n}^{\pi_{\mathbb{C}^2},\bullet} = 
-\frac{1}{24}\frac{t_1+t_2}{t_1t_2}\sum_{2\leq m\leq n}\la (2,1^{m-2}) \ra_{1,m}^{E\times \mathbb{C}^2, \circ}\cdot \widetilde{\mathsf{Part}}(n-m)\, .
\end{equation}
The genus $g=1$ case will be discussed in Section \ref{sec:con_discon_pf}. In fact, both sides
of \eqref{specialg1} vanish for $g=1$.

\subsection{Proof of Theorem \ref{Equiv1}}

Let $\mathcal{B}(u,Q)$ denote the power series
$$
\mathcal{B}(u,Q) = \sum_{g=1}^\infty \sum_{m=1}^\infty
\frac{|B_{2g-2}|}{(2g-2)!}(\sigma_{2g-1}(m)-\sigma_1(m)) Q^mu^{2g-3}.
$$
The $u^{-1}$ term of
$\mathcal{B}(u,y)$ corresponding to $g=1$
vanishes. Recall the definition 
$$\mathsf{Tr}_m = \frac{1}{t_1+t_2} \mathsf{trace} (\mathsf{M}_D^{\mathsf{Hilb}^m(\mathbb{C}^2)})
\, $$
of Section \ref{azazaz}.

 \begin{lem}\label{lem: trace}
    Under the variable change $-q=e^{iu}$, we have
    $$
    (-i)\sum_{m = 1}^\infty\mathsf{Tr}_m(q)\, Q^m = \mathcal{P}(Q)\mathcal{B}(u,Q).
    $$
\end{lem}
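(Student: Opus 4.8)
The plan is to compare the two sides directly as power series in $Q$ after making the substitution $-q = e^{iu}$, working part-by-part in the partition sum. Recall from \eqref{eqn:TR_n} that
$$
\mathsf{Tr}_m = \sum_{\mu \in \mathsf{Part}(m)} \sum_i \left( \frac{\mu_i^2}{2}\frac{(-q)^{\mu_i}+1}{(-q)^{\mu_i}-1} - \frac{\mu_i}{2}\frac{(-q)+1}{(-q)-1}\right),
$$
so that $\sum_{m\geq 1} \mathsf{Tr}_m\, Q^m$ only involves, for each fixed part-value $a\geq 1$, the function $\mathsf{T}(m,a)$ counting occurrences of $a$ among all partitions of $m$. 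Using the standard generating identity $\sum_{m} \mathsf{T}(m,a)\, Q^m = \mathcal{P}(Q)\cdot \frac{Q^a}{1-Q^a}$ (already invoked in Section \ref{sec:modularity}), I would rewrite
$$
\sum_{m\geq 1}\mathsf{Tr}_m\, Q^m = \mathcal{P}(Q)\sum_{a\geq 1} \frac{Q^a}{1-Q^a}\left( \frac{a^2}{2}\frac{(-q)^{a}+1}{(-q)^{a}-1} - \frac{a}{2}\frac{(-q)+1}{(-q)-1}\right).
$$
Since $\mathcal{P}(Q)$ has already been factored out of both sides, the lemma reduces to the Lambert-series identity
$$
(-i)\sum_{a\geq 1}\frac{Q^a}{1-Q^a}\left(\frac{a^2}{2}\frac{(-q)^a+1}{(-q)^a-1} - \frac{a}{2}\frac{q+1}{q-1}\right) = \sum_{g\geq 1}\sum_{m\geq 1}\frac{|B_{2g-2}|}{(2g-2)!}\big(\sigma_{2g-1}(m)-\sigma_1(m)\big)Q^m u^{2g-3}.
$$

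The key computation is then to expand the factor $\frac{(-q)^a+1}{(-q)^a-1}$ in powers of $u$ under $-q = e^{iu}$, i.e. to expand $\coth$-type expressions. Explicitly, $\frac{(-q)^a+1}{(-q)^a-1} = \frac{e^{iau}+1}{e^{iau}-1} = -i\cot(au/2)$, and I would use the classical expansion
$$
\frac{x}{2}\cot\frac{x}{2} = \sum_{g\geq 0} \frac{B_{2g}}{(2g)!}(ix)^{2g} \quad\text{equivalently}\quad -i\cdot\frac{a}{2}\cot\frac{au}{2} = \sum_{g\geq 0}\frac{(-1)^g\,2^{?}\dots}{}\dots,
$$
so that after multiplying by $a$ (or $a^2$) the coefficient of $u^{2g-3}$ produces exactly a Bernoulli number times a power of $a$. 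Collecting the $Q^a$ term with its divisor-sum expansion $\frac{Q^a}{1-Q^a} = \sum_{k\geq 1} Q^{ak}$ converts $\sum_a a^{2g-1}\frac{Q^a}{1-Q^a}$ into $\sum_m \sigma_{2g-1}(m)Q^m$ and $\sum_a a \frac{Q^a}{1-Q^a}$ into $\sum_m \sigma_1(m)Q^m$; the difference of the two terms inside $\mathsf{Tr}_m$ is precisely what yields $\sigma_{2g-1}(m) - \sigma_1(m)$, with the second ($\sigma_1$) term coming from the $g=1$ part of the $\cot$ expansion (Bernoulli number $B_0 = 1$) matched against the constant-in-$a$ piece $\frac{a}{2}\frac{q+1}{q-1}$. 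One should check carefully that the $u^{-1}$ (i.e. $g=1$) coefficient cancels on both sides, which is consistent with the remark preceding the lemma that the $u^{-1}$ term of $\mathcal{B}$ vanishes.

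The main obstacle I anticipate is purely bookkeeping: getting the powers of $i$, the signs $(-1)^g$, the factors of $2$, and the normalization $|B_{2g-2}|$ versus $B_{2g-2}$ exactly right through the $\cot$ expansion and the shift from $B_{2g}$-indexed coefficients of $x\cot x$ to the $\frac{|B_{2g-2}|}{(2g-2)!}u^{2g-3}$ appearing in $\mathcal{B}$. The cleanest way to manage this is to first establish the scalar identity
$$
(-i)\left(\frac{a^2}{2}\,\frac{e^{iau}+1}{e^{iau}-1} - \frac{a}{2}\,\frac{e^{iu}+1}{e^{iu}-1}\right) = \sum_{g\geq 1}\frac{|B_{2g-2}|}{(2g-2)!}\big(a^{2g-1} - a\big)u^{2g-3}
$$
(treating $a$ as a formal parameter), verify it by comparing Laurent expansions in $u$ term by term using the generating function for Bernoulli numbers, and only then sum against $\frac{Q^a}{1-Q^a}$ and reinstate the prefactor $\mathcal{P}(Q)$. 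With that scalar identity in hand the lemma is immediate.
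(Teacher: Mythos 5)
Your proposal is correct and follows essentially the same route as the paper: the paper extracts the factor $\mathcal{P}(Q)\frac{Q^r}{1-Q^r}$ by computing the Fock-space trace of $\alpha_{-r}\alpha_r$ on $\mathcal{F}_m$, which is literally the same generating function for part-occurrences $\mathsf{T}(m,a)$ that you invoke, and then performs the identical cotangent/Bernoulli expansion and Lambert-to-divisor-sum conversion. The scalar identity you isolate at the end is exactly right (the $u^{-1}$ poles cancel between the two cotangent terms, and the $g=1$ summand of $\mathcal{B}$ vanishes since $a^{1}-a=0$), so filling in the flagged bookkeeping completes the proof.
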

\begin{proof}
    The diagonal terms of $\mathsf{M}_D^{\mathsf{Hilb}^m(\mathbb{C}^2)}$ have been described in \cite[Section 2]{op}:
    $$
   \frac{1}{t_1+t_2} \mathsf{M}_D^{\mathsf{Hilb}^m(\mathbb{C}^2)} = \sum_{r=1}^\infty \left(\frac{r}{2}\frac{(-q)^r+1}{(-q)^r-1} - \frac{1}{2}\frac{(-q)+1}{(-q)-1}\right)\alpha_{-r}\alpha_r + \ldots ,
    $$
    where $\alpha_{-r}$ and $\alpha_r$
are the standard creation and annihilation
operators on the subspace $\mathcal{F}_m\subset \mathcal{F}$ of Fock space $$\mathcal{F}=\sum_{m=0}^\infty \mathcal{F}_m\, .$$
    Using the diagonal elements, we compute
\begin{equation}\label{eqn:trace_MD_expression}
    (-i)\sum_{m =1}^\infty \mathsf{Tr}_m(q)\, Q^m = \sum_{r \geq 1} \left(\frac{-ir}{2}\frac{(-q)^r+1}{(-q)^r-1} - \frac{-i}{2}\frac{(-q)+1}{(-q)-1}\right)\sum_{m = 1}^\infty \mathsf{trace}(\alpha_{-r}\alpha_r\,
    _{\mid \mathcal{F}_m}
    )\cdot Q^m\, .
    \end{equation}
    Since $\alpha_{-r}\alpha_r (|\mu\rangle) = r \cdot |\{i \mid \mu_i=r\}|\cdot|\mu\rangle$,
    \begin{equation}\label{eqn:energy}
    \sum_{m = 1}^\infty \mathsf{trace} (\alpha_{-r} \alpha_r\,_{\mid \mathcal{F}_m}) 
    \cdot Q^m = r\frac{Q^r}{1-Q^r}\prod_{m = 1}^\infty \frac{1}{1-Q^m}\, .
    \end{equation}
    The expansion of the cotangent function yields
    \begin{align}
    \begin{split}\label{eqn:cotangent}
    \frac{-ir}{2}\frac{(-q)^r +1}{(-q)^r-1} &= \frac{-ir}{2}\frac{e^{iur/2}+e^{-iur/2}}{e^{iur/2}-e^{iur/2}} \\ &= -\frac{r}{2}\cot\left(\frac{ur}{2}\right)\\
    &= -\frac{1}{u} + \sum_{h = 0}^\infty \frac{|B_{2h}|r^{2h}}{(2h)!}u^{2h-1}\, .
    \end{split}
    \end{align}
    After putting together \eqref{eqn:trace_MD_expression}, \eqref{eqn:energy} and \eqref{eqn:cotangent} and shifting $h =g-1$, we obtain:
    \begin{align*}
    \begin{split}
    (-i)\sum_{m =1}^\infty \mathsf{Tr}_m(q)\,  Q^m &= \mathcal{P}(Q) \left(\sum_{g=1}^\infty 
    \sum_{r=1}^\infty 
    \frac{|B_{2g-2}|}{(2g-2)!}(r^{2g-1}-r) \frac{Q^r}{1-Q^r} u^{2g-3}\right)\\
    &=\mathcal{P}(Q)\left( \sum_{g = 1}^\infty \frac{|B_{2g-2}|}{(2g-2)!}u^{2g-3}\sum_{r =1}^\infty(r^{2g-1}-r)(Q^r + Q^{2r}+\ldots) \right) \\
    &=\mathcal{P}(Q)\left( \sum_{g = 1}^\infty\frac{|B_{2g-2}|}{(2g-2)!}u^{2g-3}\sum_{m = 1}^\infty  Q^m \sum_{k \mid m}(k^{2g-1}-k) \right)\\
    &= \mathcal{P}(Q) \mathcal{B}(u,Q)\, .
    \end{split}
    \end{align*}
\end{proof}
By Theorem \ref{Equiv2} and Proposition \ref{prop:degeneration_form}, we obtain
\begin{equation} \label{xcce}
    \sum_{g=1}^\infty \sum_{n=1}^\infty u^{2g-3}Q^n\la \lambda_{g-2}\lambda_g |(2,1^{n-2})\ra_{g,n}^{\pi, \circ} = \frac{1}{24}\mathcal{B}(u,Q).
\end{equation}
The $g=1$ terms on the left are degenerate and interpreted as  
$$\la (2,1^{n-2})\ra_{1,n}^{\pi, \circ}=0\, ,$$
so all $u^{-1}$ vanish on both sides of \eqref{xcce}. 

By Lemma \ref{lem: trace} and \eqref{eqn:GWHilb_fixed}, the connected/disconnected equation for a fixed elliptic curve target \eqref{eqn:conn_disc_fixed} can be written as
\begin{equation*}
    \mathcal{P}(Q)\left(\sum_{g=1}^\infty \sum_{n = 1}^\infty \la(2, 1^{n-2})\ra_{g,n}^{E \times \CC^2, \circ}u^{2g-3}Q^n\right) = i(t_1 + t_2)\sum_{n = 1}^\infty \mathsf{Tr}_n(q)\,  Q^n = -(t_1 + t_2)\mathcal{P}(Q)\mathcal{B}(u,Q)\ ,
\end{equation*}
after $-q=e^{iu}$. The connected/disconnected calculus of
Proposition \ref{prop:con_discon} then yields
\begin{align*}
    \begin{split}
    \sum_{g= 1}^\infty \sum_{n=1}^\infty 
    \langle (2,1^{n-2})\rangle_{g,n}^{\pi_{\mathbb C^2}, \bullet}Q^n u^{2g-3} &= -\frac{1}{24}\frac{t_1 + t_2}{t_1t_2} (-(t_1 + t_2)\mathcal{B}(u,Q))\cdot \widetilde{\mathcal{P}}(Q) + \frac{(t_1 + t_2)^2}{t_1t_2} \frac{1}{24}\mathcal{B}(u,Q)\mathcal{P}(Q)\\
    &= \frac{1}{24}\frac{(t_1 + t_2)^2}{t_1t_2} \mathcal{B}(u,Q) \mathcal{P}(Q)(1 + \log \mathcal{P}(Q))  \\
    &= \frac{1}{i}\frac{1}{24}\frac{(t_1 + t_2)^2}{t_1t_2} \left(\sum_{n=1}^\infty \mathsf{Tr}_n(q) \, Q^n\right)\left(1 + \sum_{k = 1}^\infty \sigma_{-1}(k)Q^k\right)\, .
    \end{split}\, 
\end{align*}
In the second equality, we have applied Lemma \ref{lem:Ptilde_P}.
In the third equality, we have used
$$
\operatorname{log} \mathcal{P}(Q) = -\sum_{n =1}^\infty \operatorname{log}(1-Q^n) = \sum_{k =1}^\infty \sigma_{-1}(k)Q^k\, .
$$
Since, by Theorem A, 
\begin{equation*}
    \langle D\rangle_1^{\Hilb} = \frac{1}{i}\langle (2,1^{n-2})\rangle^{\pi_{\mathbb C^2}, \bullet}\, ,
\end{equation*}
 $\langle D\rangle_1^{\Hilb}$ 
is $\frac{-1}{24}\frac{(t_1 + t_2)^2}{t_1t_2}$ times the $Q^n$ coefficient of
$$
\left(\sum_{n =1}^\infty \mathsf{Tr}_n(q)\,  Q^n\right)\left(1 + \sum_{k = 1}^\infty \sigma_{-1}(k)Q^k\right)\, .
$$
We have proven Theorem \ref{Equiv1}. \qed

\subsection{Proof of Proposition \ref{prop:con_discon}}\label{sec:con_discon_pf}
We study here the connected/disconnected calculus for the family $\pi_{\mathbb{C}^2}$.
To start, we consider the universal elliptic curve
$$ \pi:\mathcal{E} \rightarrow \overline{\mathcal{M}}_{1,1}\, $$
with section $\mathsf{p}_1$.
Let $(m, \mathbf k, \mathbf g)$ be a triple,
\begin{itemize}
    \item $2 \leq m \leq n$,
    \item $\mathbf k = (k_1, \ldots , k_s)$ is a partition of $n-m$, 
    \item $\mathbf g = (g_0, \ldots , g_s)$ is a partition of $g+s-1$,
\end{itemize}
where the parts of $\mathbf{k}$ and $\mathbf{g}$ are positive integers.
We define
\begin{align*}
    \Mbar_{m,\mathbf k , \mathbf g}^{\mathsf{fiber}} &= \Mbar_{g_0}^\circ(\pi,(2,1^{m-2})) \times_{\Mbar_{1,1}} \Mbar_{g_1}^\circ(\pi,(1^{k_1})) \times_{\Mbar_{1,1}}\ldots \times_{\Mbar_{1,1}} 
    \Mbar_{g_s}^\circ(\pi,(1^{k_s})),\\
    \Mbar_{m,\mathbf k , \mathbf g} &= \Mbar_{g_0}^\circ(\pi,(2,1^{m-2})) \times 
    \Mbar^\circ_{g_1}(\pi,(1^{k_1})) \times\ldots \times \Mbar_{g_s}^\circ(\pi,(1^{k_s}))\, ,
\end{align*}
where 
$\Mbar_{m,\mathbf k , \mathbf g}^{\mathsf{fiber}}$ is the base change of $\Mbar_{m,\mathbf k , \mathbf g}$ along the diagonal $\Delta : \Mbar_{1,1} \to \Mbar_{1,1}\times \ldots \times \Mbar_{1,1}$.

The fixed locus of $\Mbar^{\bullet}_{g}(\pi_{\mathbb C^2},n, (2,1^{n-2}))$ for the action of the torus $\mathsf{T}=(\mathbb C^*)^2$ is the disjoint union of the stacks $\Mbar_{m, \mathbf k , \mathbf g}^{\mathsf{fiber}}$, up to an automorphism factor which we denote by $\operatorname{Aut}(\mathbf k, \mathbf g)$, and the restriction of the virtual class $[\Mbar_{g}^\bullet(\pi_{\CC^2}, (2,1^{n-2}))]^{vir}$ to  $\Mbar_{m, \mathbf k , \mathbf g}^{\mathsf{fiber}}$ agrees with $\Delta^![\Mbar_{m,\mathbf k, \mathbf g}]^{vir}$. By the localization formula of \cite{GP}, we have 
\begin{equation*}
    [\Mbar^\bullet_{g}(\pi_{\CC^2}, (2,1^{n-2}))]^{vir} = \sum_{(m,\mathbf k, \mathbf g )} \frac{1}{\operatorname{Aut}(\mathbf k, \mathbf g)}\frac{\Delta^![\Mbar_{m,\mathbf k, \mathbf g}]^{vir}}{e_{(\mathbb{C}^*)^2}(N^{vir})}\, ,
\end{equation*}
where the contribution of the virtual normal bundle on each factor of $\Mbar_{m,\mathbf k, \mathbf g}$ is given by
\begin{align}\label{ddcc4}
\begin{split}
    \frac{1}{\mathsf{e}_{\mathsf{T}}(N^{vir})} &=\frac{c(\mathbb E^\vee\otimes t_1)c(\mathbb E^\vee\otimes t_2)}{t_1t_2} \\
    &= -\frac{t_1+t_2}{t_1t_2}\lambda_{g_i}\lambda_{g_i-1} + \frac{(t_1+t_2)^2}{t_1t_2}\lambda_{g_i}\lambda_{g_i-2}-(t_1+t_2)\lambda_{g_i-1}\lambda_{g_i-2}+\ldots
\end{split}
\end{align}
if $g_i >1$, and $-\frac{t_1+t_2}{t_1t_2}\lambda_{1} + 1$ if $g_i=1$. We denote the class
\eqref{ddcc4}
by $N_{g_i}$, and note that the lower order terms in the Hodge classes
will not contribute for dimensional reasons. After integration, we obtain 
\begin{align}\label{eqn:first_localization}
    \begin{split}
        \langle (2,1^{n-2})\rangle_{g,n}^{\pi_{\mathbb C^2}, \bullet} &= \sum_{(m,\mathbf k , \mathbf g)}\frac{1}{\operatorname{Aut}(\mathbf k, \mathbf g)}\int_{[\Mbar^{\mathsf{fiber}}_{(m,\mathbf k , \mathbf g)}]^{vir}}\prod_{i=0}^sN_{g_i}\\
        &=\sum_{(m,\mathbf k , \mathbf g)}\frac{1}{\operatorname{Aut}(\mathbf k, \mathbf g)}\sum_{j = 0}^s\int_{[\Mbar_{(m,\mathbf k , \mathbf g)}]^{vir}}\prod_{i=0}^sN_{g_i} \prod_{i \neq j}\operatorname{ev}_{i}^*([\mathsf{pt}])\\
    &=\sum_{(m,\mathbf k , \mathbf g)}\frac{1}{\operatorname{Aut}(\mathbf k, \mathbf g)}\left(\langle N_{g_0}|(2,1^{m-2})\rangle_{g_0,m}^{\pi, \circ} \prod_{i= 1}^s\langle N_{g_i} |(1^{k_i})\rangle_{g_i, k_i}^{E,\circ }  \right.\\
    & + \left.\langle N_{g_0}|(2,1^{m-2})\rangle_{g_0,m}^{E, \circ}\sum_{j = 1}^s-\frac{t_1+t_2}{t_1t_2}\langle \lambda_{g_j}\lambda_{g_j-1}|(1^{k_j})\rangle_{g_j, k_j}^{\pi, \circ}\prod_{i \neq j}\langle N_{g_i} |(1)\rangle_{g_i, k_i}^{E,\circ }\right)
    \end{split}
\end{align}
where $\operatorname{ev}_i : \Mbar_{(m,\mathbf k , \mathbf g)} \to \Mbar_{1,1}$ recovers the isomorphism class of the target curve of the $i$-th map. The second equality follows from the decomposition of the diagonal of $\Mbar_{1,1}$:
$$
[\Delta] = \sum_{j=0}^s[\mathsf{pt}] \boxtimes \ldots \boxtimes [\mathsf{pt}]\boxtimes 1 \boxtimes [\mathsf{pt}]\boxtimes \ldots \boxtimes[\mathsf{pt}] \in  \operatorname{\mathsf{CH}}^{s}(\Mbar_{1,1}\times \cdots \times \Mbar_{1,1})\, .
$$

If $g_i>1$,
the class $N_{g_i}$ contains a $\lambda_{g_i}$ factor. Hence, $\langle N_{g_i}|(1^{k_i})\rangle_{g_i, k_i}^{E, \circ}=0$ unless $g_i=1$, in which case 
$$\langle N_{1}|(1^{k_i})\rangle_{1, k_i}^{E, \circ}
=\langle (1^{k_i})\rangle_{1,k_i}^{E, \circ}
= \frac{\sigma(k_i)}{k_i}\, .$$
\begin{lem}\label{lem:1^d_evaluation}
   If $g_i>1$,  we have $\langle \lambda_{g_i} \lambda_{g_i-1}|(1^{k_i})\rangle_{g_i,k_i}^{\pi,\circ} =0$. If $g_i=1$,
   $$\langle \lambda_{1} \lambda_{0}|(1^{k_i})\rangle_{1,k_i}^{\pi,\circ}
   =\frac{1}{24}\langle (1^{k_i})\rangle_{1,k_i}^{E, \circ} = \frac{\sigma_1(k_i)}{24k_i}\, .$$
\end{lem}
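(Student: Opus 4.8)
The plan is to exploit the rank-one Hodge sub-bundle that the degree-$k$ map to the elliptic fibers induces on $\Mbar^{\circ}_{g}(\pi,(1^{k}))$, and then to feed this into Mumford's relation. Write $g=g_i$, $k=k_i$, $\epsilon:\Mbar^{\circ}_{g}(\pi,(1^{k}))\to\Mbar_{1,1}$, and set $\lambda_j=c_j(\mathbb{E}_g)$ for the Hodge bundle $\mathbb{E}_g$ of the domain curves and $\lambda_1=c_1(\mathbb{E}_1)$ on $\Mbar_{1,1}$. For each relative stable map $[f:C\to E']$ (with $E'$ the possibly expanded elliptic target), pulling back $1$-forms gives $H^0(\omega_{E'})\to H^0(\omega_C)$; since $k>0$, some component of $C$ dominates the genus-$1$ component $E$ of $E'$, on which a generator of $H^0(\omega_{E'})$ is a nowhere-zero form, so this map is injective. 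Because $H^0(\omega_{E'})$ assembles into the line bundle $\epsilon^{*}\mathbb{E}_1$ on the moduli space — the relative dualizing sheaf of the universal elliptic curve being pulled back from $\Mbar_{1,1}$, exactly as $\nu_{*}\omega_\nu\cong\epsilon_S^{*}(\mathsf{Tan}_{\mathbb{P}^1})$ in Section~\ref{sec:virtual_class_comparison} — this produces an exact sequence $0\to\epsilon^{*}\mathbb{E}_1\to\mathbb{E}_g\to\mathbb{F}\to 0$ with $\mathbb{F}$ locally free of rank $g-1$.

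For $g\geq 2$ I would argue as follows. Mumford's relation $c(\mathbb{E}_g)c(\mathbb{E}_g^{\vee})=1$ holds on $\Mbar^{\circ}_{g}(\pi,(1^{k}))$ since $\mathbb{E}_g$ is pulled back from $\Mbar_g$ along stabilization of the domain, and $c(\epsilon^{*}\mathbb{E}_1)c(\epsilon^{*}\mathbb{E}_1^{\vee})=1-(\epsilon^{*}\lambda_1)^2=1$ because $\lambda_1^2=0$ on the curve $\Mbar_{1,1}$; combining these with the exact sequence gives $c(\mathbb{F})c(\mathbb{F}^{\vee})=1$, whose degree-$(2g-2)$ part reads $c_{g-1}(\mathbb{F})^2=0$. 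Reading $\lambda_g=\epsilon^{*}\lambda_1\cdot c_{g-1}(\mathbb{F})$ and $\lambda_{g-1}=c_{g-1}(\mathbb{F})+\epsilon^{*}\lambda_1\cdot c_{g-2}(\mathbb{F})$ off the sequence, I then get
$$\lambda_g\lambda_{g-1}=\epsilon^{*}\lambda_1\cdot c_{g-1}(\mathbb{F})^2+(\epsilon^{*}\lambda_1)^2\,c_{g-1}(\mathbb{F})\,c_{g-2}(\mathbb{F})=0$$
in $\mathsf{CH}^{*}(\Mbar^{\circ}_{g}(\pi,(1^{k})))$, so $\la\lambda_g\lambda_{g-1}|(1^{k})\ra^{\pi,\circ}_{g,k}=\int_{[\Mbar^{\circ}_{g}(\pi,(1^{k}))]^{vir_\pi}}\lambda_g\lambda_{g-1}=0$.

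For $g=1$ the exact sequence degenerates to $\epsilon^{*}\mathbb{E}_1\cong\mathbb{E}_1$, so $\lambda_1\lambda_0=\epsilon^{*}\lambda_1$ on $\Mbar^{\circ}_{1}(\pi,(1^{k}))$. (Geometrically this reflects that by Riemann--Hurwitz a connected genus-$1$ degree-$k$ map to a smooth fiber is an \'etale cover, for which the condition $(1^{k})$ is vacuous and pullback of $1$-forms is an isomorphism.) Pushing the $\pi$-relative virtual class to the base gives $\epsilon_{*}[\Mbar^{\circ}_{1}(\pi,(1^{k}))]^{vir_\pi}=\la(1^{k})\ra^{E,\circ}_{1,k}\,[\Mbar_{1,1}]$, and $\la(1^{k})\ra^{E,\circ}_{1,k}=\sigma_1(k)/k$ is the automorphism-weighted count of connected unramified degree-$k$ covers of $E$; with $\int_{\Mbar_{1,1}}\lambda_1=\tfrac{1}{24}$ and the projection formula,
$$\la\lambda_1\lambda_0|(1^{k})\ra^{\pi,\circ}_{1,k}=\int_{[\Mbar^{\circ}_{1}(\pi,(1^{k}))]^{vir_\pi}}\epsilon^{*}\lambda_1=\frac{\sigma_1(k)}{24k}=\frac{1}{24}\la(1^{k})\ra^{E,\circ}_{1,k}\,.$$

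The hard part will be justifying fibrewise injectivity of $\epsilon^{*}\mathbb{E}_1\to\mathbb{E}_g$ over the boundary, where the domain acquires non-separating nodes and the target expands into chains of rational curves over $\mathsf{p}_1$: one must check that a generator of $H^0$ of the dualizing sheaf of an expanded target still restricts to a nonzero form on the genus-$1$ component $E$, and that — the induced map to the unexpanded fiber having positive degree — some component of the domain genuinely maps onto $E$ (in particular, for $g=1$, the elliptic component of the domain is never contracted). This is the same mechanism as in the proof of Proposition~\ref{prop:virtual_class_comparison}. The remaining inputs — Mumford's relation, $\lambda_1^2=0$ on $\Mbar_{1,1}$, the \'etale-cover count $\sigma_1(k)/k$, and $\int_{\Mbar_{1,1}}\lambda_1=\tfrac{1}{24}$ — are standard.
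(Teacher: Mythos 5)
Your proof is correct, and it rests on the same key construction as the paper's: the injection $\epsilon^*\mathbb{E}_1 \hookrightarrow \mathbb{E}_{g}$ coming from pulling back $1$-forms along a positive-degree map to the elliptic fibers, and the resulting exact sequence with rank-$(g-1)$ quotient $\mathbb{F}$. Where you diverge is the endgame for $g_i>1$: the paper first strips the relative condition $(1^{k_i})$ by degenerating to the normal cone of $\mathsf{p}_1$, then uses $\lambda_1=\tfrac{1}{24}[\mathsf{pt}]$ on $\overline{\mathcal{M}}_{1,1}$ to trade the families integral for $\tfrac{1}{24}\int_{[\overline{\mathcal{M}}^\circ_{g_i}(E,k_i)]^{vir}}\lambda_{g_i-1}^2$ over a fixed elliptic target, and finishes with $\lambda_{g-1}^2=2\lambda_g\lambda_{g-2}$ and $\lambda_g$-vanishing for $E$. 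You instead prove the stronger statement $\lambda_g\lambda_{g-1}=0$ in the Chow ring of the families moduli space itself, by applying Mumford's relation to $\mathbb{F}$ (to get $c_{g-1}(\mathbb{F})^2=0$) together with $(\epsilon^*\lambda_1)^2=0$; this is arguably cleaner and avoids the reduction to the fixed target, at the cost of working with the relative moduli space throughout --- which is exactly why you have to worry about injectivity of $\epsilon^*\mathbb{E}_1\to\mathbb{E}_g$ over loci where the target expands along $\mathsf{p}_1$ as well as over the nodal fibers. The paper's preliminary degeneration step removes the $\mathsf{p}_1$-expansions from the picture, and the remaining expansions over nodal fibers are handled by the dualizing-sheaf identification $\nu_*\omega_\nu\cong\epsilon^*\mathbb{E}_1$ of Section~\ref{sec:virtual_class_comparison}, which is the mechanism you correctly identify; so the "hard part" you flag is genuine but is resolved by the same argument the paper already supplies. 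Your $g_i=1$ computation ($\lambda_1=\epsilon^*\lambda_1$, projection formula, and the \'etale-cover count $\sigma_1(k)/k$) matches the paper's.
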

\begin{proof}
The first step of the argument is the
equality
$$\langle \lambda_{g_i} \lambda_{g_i-1}|(1^{k_i})\rangle_{g_i,k_i}^{\pi,\circ} =
\langle \lambda_{g_i} \lambda_{g_i-1} \rangle_{g_i,k_i}^{\pi,\circ}$$
obtained from the standard degeneration to the normal cone of
the section $\mathsf{p}_1$.

We now follow the geometric notation and analysis of Section \ref{sec:virtual_class_comparison}.
The moduli space of stable maps to the fibers
of 
$\pi$
lies over $\overline{\mathcal{M}}_{1,1}$,
$$\epsilon: \Mbar^\circ_{g_i}(\pi,k_i) \rightarrow 
\overline{\mathcal{M}}_{1,1}
\, .$$
The universal curve $\mu: \mathcal{C}
\rightarrow \overline{\mathcal M}^\circ_{g_i}(\pi,k_i)$
carries a universal evaluation map 
\[
\begin{tikzpicture}[scale=2, every node/.style={font=\small}]
  \node (C) at (0,1.3) {$\mathcal{C}$};
  \node (T) at (1.5,1.3) {$\mathcal{T}$};
  \node (M) at (0.75,0) {$\overline{\mathcal{M}}^\circ_{g_i}(\pi,k_i)$};

  \draw[->] (C) -- (T) node[midway, above] {$f$};
  \draw[->] (C) -- (M) node[midway, left] {$\mu$};
  \draw[->] (T) -- (M) node[midway, right] {$\nu$};
\end{tikzpicture}
\]
to the universally expanded target
\[
\begin{tikzpicture}[scale=2, every node/.style={font=\small}]
  \node (C) at (0,1.3) {$\mathcal{T}$};
  \node (T) at (1.5,1.3) {$\overline{\mathcal M}^\circ_{g_i}(\pi,k_i)\times _
  {\overline{\mathcal{M}}_{1,1}} \mathcal{E}$};
  \node (M) at (0.75,0) {$\overline{\mathcal{M}}^\circ_{g_i}(\pi,k_i)\, .$};

  \draw[->] (C) -- (T) node[midway, above] {$h$};
  \draw[->] (C) -- (M) node[midway, left] {$\nu$};
  \draw[->] (T) -- (M) node[midway, right] {$\pi_M$};
\end{tikzpicture}
\]
The target $\mathcal{T}$ is a family of elliptic
curves over $\overline{\mathcal M}^\circ_{g_i}(\pi,k_i)$
with possible expansion over the nodal fibers. 
We have a pull-back map{\footnote{$\mathbb{E}_1$ is
the Hodge bundle on ${\overline{\mathcal{M}}}_{1,1}$.}}
\begin{equation} \label{injinj2}
\epsilon^*\mathbb{E}_1 \cong \nu_*\omega_{\mu} \ \stackrel{f^*}{\rightarrow}\  \mu_*\omega_{\mu} \cong \mathbb{E}_g
\end{equation}
over $\overline{\mathcal M}_{g_i}(\pi,k_i)$.
Since $k_i>0$, the map \eqref{injinj2} is injective,
so we obtain an exact sequence 
\begin{equation*} \label{vrrf2}
0 \rightarrow
\epsilon^* \mathbb{E}_1 \rightarrow
\mathbb{E}_g
\rightarrow
\mathbb{F} \rightarrow 0\, ,
\end{equation*}
where $\mathbb{F}$ is a rank $g-1$ vector bundle on
$\overline{\mathcal M}^\circ_{g_i}(\pi,k_i)$.
We therefore have a factorization
\begin{equation}
\lambda_g  \ = \ c_1(\epsilon^*\mathbb{E}_1) \cdot c_{g-1}(\mathbb{F})  
\ = \ \frac{1}{24} \epsilon^*([E]) \cdot \lambda_{g-1}\, 
\label{faccc2}
\end{equation}
on 
$\overline{\mathcal M}_{g_i}(\pi,k_i)$, where $[E]\in \overline{\mathcal{M}}_{1,1}$
is the moduli point of a fixed nonsingular elliptic curve $E$. 

The factorization \eqref{faccc2} of $\lambda_g$
implies 
\begin{equation}\label{vrr44}
\int_{[\overline{\mathcal M}^\circ_{g_i}(\pi,k_i)]^{vir}}\lambda_{g_i}\lambda_{g_i-1} =
\frac{1}{24} 
\int_{[\overline{\mathcal M}^\circ_{g_i}(E,k_i)]^{vir}}\lambda^2_{g_i-1} \, .
\end{equation}
If $g_i>1$, then $\lambda_{g_i-1}^2 = 2\lambda_{g_i}\lambda_{g_i-2}$. Hence, the integral
\eqref{vrr44}  is 0 by $\lambda_g$-vanishing for elliptic targets.
If $g_i=1$, the integral is easily evaluated as claimed.
\end{proof}

Therefore, the only possible non-zero contributions to the right hand side of \eqref{eqn:first_localization} come from triplets $(e,\mathbf k , \mathbf g)$ where $\mathbf g = (g,1,\ldots , 1)$
with $g\geq 1$.

\noindent $\bullet$
For $g>1$, we have
\begin{equation*}
\langle N_{g}|(2, 1^{m-2})\rangle_{g,m}^{\pi, \circ} =\frac{(t_1+t_2)^2}{t_1t_2}\langle \lambda_{g}\lambda_{g-2}|(2,1^{m-2})\rangle_{g,m}^{\pi, \circ} \, .
\end{equation*}
By a similar localization analysis, we have
\begin{equation*}
\langle N_{g}|(2,1^{m-2})\rangle_{g,m}^{E, \circ} =-(t_1+t_2)\langle \lambda_{g-1}\lambda_{g-2}|(2,1^{m-2})\rangle_{g,m}^{E, \circ} = \langle (2,1^{m-2})\rangle^{E \times \mathbb C^2}_{g,m}
\end{equation*}
Therefore, the formula in \eqref{eqn:first_localization} simplifies to
\begin{align*}
    \begin{split}
        \langle (2,1^{n-2})\rangle_{g,n}^{\pi_{\mathbb C^2}, \bullet}
    =\sum_{m=2}^n\sum_{\mathbf k \,\vdash n-m}&\left(\frac{(t_1+t_2)^2}{t_1t_2}\langle \lambda_{g}\lambda_{g-2}|(2,1^{m-2})\rangle_{g,m}^{\pi, \circ} \frac{\prod_{i} \langle (1^{k_i})\rangle_{1,k_i}^{E,\circ}}{\operatorname{Aut}(\mathbf k)} \right. \\
    &\phantom{+}\left.
    -\frac{1}{24}\frac{t_1+t_2}{t_1t_2}\langle (2,1^{m-2})\rangle_{g,m}^{E\times\mathbb C^2,\circ} \frac{l(\mathbf k)\prod_{i} \langle (1^{k_i})\rangle_{1,k_i}^{E,\circ}}{\operatorname{Aut}(\mathbf k)}\right).
    \end{split}
\end{align*}
Summing over all possible partitions, we obtain the coefficients $\operatorname{\mathsf{Part}}(n-m)$ and $\widetilde{\operatorname{\mathsf{Part}}}(n-m)$ 
of Proposition \ref{prop:con_discon}
as defined in Section \ref{sec:conn_disconn_calc}.

\noindent $\bullet$
For $g=1$, $\langle N_{1}|(2,1^{m-2})\rangle_{1,m}^{\pi, \circ} = \langle (2,1^{m-2})\rangle_{1,m}^{\pi, \circ}  =0$ by Proposition \ref{prop:degeneration_form}, so both sides of formula \eqref{specialg1} after Proposition \ref{prop:con_discon} vanish.
\qed


\subsection{Proof of Proposition \ref{prop:degeneration_form}}\label{sec:degeneration_pf}


Let  $g \geq 2$.
Let $\operatorname{Bl_{\mathsf{p}_1 \times \{0\}} (\mathcal E \times \mathbb A^1)}$ be the degeneration to the normal cone
of the section $\mathsf{p}_1$. The special fiber over $0 \in \mathbb A^1$ is the family
$$
\pi \cup \pi_P: \mathcal E \cup \mathbb P (T_\mathsf{p_1} \mathcal E \oplus \mathbb C) \longrightarrow \Mbar_{1,1},
$$
where the gluing identifies the section $\mathsf{p}_1$ of $\mathcal{E}$ with the $0$-section of 
$\mathbb P (T_{\mathsf{p}_1} \mathcal E \oplus \mathbb C)$. 
The universal section $\mathsf{p}_1$ is now the $\infty$-section of $\pi_P$. 
We will use the degeneration formula
\begin{equation}\label{eqn:general_deg_formula}
    \langle\tau_1(\mathsf{p}_1)\lambda_g \lambda_{g-2}\rangle_{g,n}^{\pi, \circ}=\sum_{(\Gamma_1, \Gamma_2, \mu)}\langle \Lambda_1 | \mu\rangle_{\Gamma_1}^{\pi,\bullet}\langle \Lambda_2\tau_1(\infty) | \mu^*\rangle_{\Gamma_2}^{\pi_P,\bullet}\, ,
\end{equation}
with notation:
\begin{enumerate}
\item[$\bullet$] $\Gamma_1$ and $\Gamma_2$ are possibly disconnected topological types corresponding to
the domains mapping to the components $\mathcal{E}$ and $\mathbb P (T_\mathsf{p} \mathcal E \oplus \mathbb C)$,
\item[$\bullet$]
$\mu$ is a partition of $d$ decorated with elements of a basis of $H^*(\mathsf{p}) = H^*(\Mbar_{1,1})$, 
\item[$\bullet$] $\Lambda_1$ and $\Lambda_2$ reflect the distribution of the Hodge insertions.
\end{enumerate}

The Hodge insertion $\lambda_g\lambda_{g-2} = 2\lambda_{g-1}^2$ annihilates all contributions of graphs which are not of compact type or have more than two vertices with positive genus. If there are exactly
two vertices of genera $0<g_1, g_2<g$, then the Hodge insertion distributes as
\begin{equation}
    \lambda_g \lambda_{g-2}|_{\Mbar_{g_1} \times \Mbar_{g_2}} = \lambda_{g_1}\lambda_{g_1-1} \boxtimes \lambda_{g_2}\lambda_{g_2-1}.
\end{equation}

An analysis of the virtual dimension of each vertex shows that all vertex contributions vanish for dimension reasons
except for the following list:
\begin{align}
\begin{split}\label{eqn:vertex1}
    \langle \lambda_g \lambda_{g-2}|(2[\Mbar_{1,1}],1[\Mbar_{1,1}],\ldots ,1[\Mbar_{1,1}])\rangle_{g}^{\pi,\circ}
\end{split}\\
\begin{split}\label{eqn:vertex2}
    \langle \lambda_g \lambda_{g-2}|(1[\mathsf{pt}],1[\Mbar_{1,1}],\ldots ,1[\Mbar_{1,1}])\rangle_{g}^{\pi,\circ}
\end{split} \\
\begin{split}\label{eqn:vertex3}
    \langle \lambda_{g_1} \lambda_{g_1-1}|(1[\Mbar_{1,1}],\ldots ,1[\Mbar_{1,1}])\rangle_{g_1}^{\pi,\circ}
\end{split}\\
\begin{split}\label{eqn:vertex4}
    \langle (1[\mathsf{pt}])\rangle_{0}^{\pi_P, \circ}
\end{split}\\
\begin{split}\label{eqn:vertex5}
    \langle \tau_1(\infty)|(1[\Mbar_{1,1}])\rangle_{0}^{\pi_P, \circ}
\end{split}\\
\begin{split}\label{eqn:vertex6}
    \langle \tau_1(\infty)|(2[\mathsf{pt}])\rangle_{0}^{\pi_P, \circ}
\end{split}\\
\begin{split}\label{eqn:vertex7}
    \langle \tau_1(\infty)|(1[\mathsf{pt}],1[\mathsf{pt}])\rangle_{0}^{\pi_P, \circ}
\end{split}\\
\begin{split}\label{eqn:vertex8}
    \langle \tau_1(\infty)\lambda_{g_2}\lambda_{g_2-1}|(1[\mathsf{pt}])\rangle_{g_2}^{\pi_P, \circ}
\end{split}
\end{align}
Three further vanishings hold for other reasons:
\begin{enumerate}
\item[$\bullet$] Vertex \eqref{eqn:vertex2} vanishes because
\begin{equation*}
    [\Mbar_g(\pi/\mathsf{p}_1)]^{vir} \cap \operatorname{ev}_{\Mbar_{1,1}}^*(\mathsf{pt}) = [\Mbar_g(E/p_1)]^{vir}
\end{equation*}
pairs to zero with $\lambda_g$. 
\item[$\bullet$]
Vertex \eqref{eqn:vertex7} vanishes because of the two point
conditions over $\Mbar_{1,1}$. 
\item[$\bullet$]
Vertex \eqref{eqn:vertex3} vanishes unless $g_1=1$ by  Lemma \ref{lem:1^d_evaluation}. 
\end{enumerate}

It follows that the only possible combinatorial types that contribute to the right hand side of \eqref{eqn:general_deg_formula} are given by the two configurations of Figure \ref{fig:deg_contributions}, where the first configuration is counted $n$ times.

\begin{figure}[ht]
    \centering
    \begin{tikzpicture}[scale=0.7]
        \node[style={draw, circle, minimum size=0.8cm, font=\small}] (A) at (0,0) {$1$};
        \node (B1) at (2,6) {};
        \node (B2) at (2,3) {};
        \node (B3) at (2,0) {};
        \node (B4) at (2,-2) {};

        \node (C1) at (3.2,6) {$1[{\Mbar_{1,1}}]$};
        \node (C2) at (3.2,3) {$1[\Mbar_{1,1}]$};
        \node (C3) at (3.2,1) {$\vdots$};
        \node (C4) at (3.2,-2) {$1[{\Mbar_{1,1}}]$};

        \node (D1) at (5.7,6) {$1[\mathsf{pt}]$};
        \node (D2) at (5.7,3) {$1[\mathsf{pt}]$};
        \node (D3) at (5.7,1) {$\vdots$};
        \node (D4) at (5.7,-2) {$1[\mathsf{pt}]$};

        \node[style={draw, circle, minimum size=0.8cm, font=\tiny}] (E1) at (8,6) {$g-1$};
        \node[style={draw, circle, minimum size=0.8cm, font=\small}] (E2) at (8,3) {$0$};
        \node (E3) at (8,1) {$\vdots$};
        \node[style={draw, circle, minimum size=0.8cm, font=\small}] (E4) at (8,-2) {$0$};

        \node (F1) at (9.5,6) {};
        
        \draw (A) -- (B1);
        \draw (A) -- (B2);
        \draw (A) -- (B4);
        
        \draw (C1) -- (D1);
        \draw (C2) -- (D2);
        \draw (C4) -- (D4);

        \draw (D1) -- (E1);
        \draw (D2) -- (E2);
        \draw (D4) -- (E4);
        
        \draw (E1) -- (F1);
    \end{tikzpicture}
     \hspace{5mm}
    \begin{tikzpicture}[scale=0.7]
        \node[style={draw, circle, minimum size=0.8cm, font=\small}] (A) at (0,0) {$g$};
        \node (B1) at (2,6) {};
        \node (B2) at (2,3) {};
        \node (B3) at (2,0) {};
        \node (B4) at (2,-2) {};

        \node (C1) at (3.2,6) {$2[{\Mbar_{1,1}}]$};
        \node (C2) at (3.2,3) {$1[\Mbar_{1,1}]$};
        \node (C3) at (3.2,1) {$\vdots$};
        \node (C4) at (3.2,-2) {$1[{\Mbar_{1,1}}]$};

        \node (D1) at (5.7,6) {$2[\mathsf{pt}]$};
        \node (D2) at (5.7,3) {$1[\mathsf{pt}]$};
        \node (D3) at (5.7,1) {$\vdots$};
        \node (D4) at (5.7,-2) {$1[\mathsf{pt}]$};

        \node[style={draw, circle, minimum size=0.8cm, font=\small}] (E1) at (8,6) {$0$};
        \node[style={draw, circle, minimum size=0.8cm, font=\small}] (E2) at (8,3) {$0$};
        \node (E3) at (8,1) {$\vdots$};
        \node[style={draw, circle, minimum size=0.8cm, font=\small}] (E4) at (8,-2) {$0$};

        \node (F1) at (9.3,6) {};
        
        \draw (A) -- (B1);
        \draw (A) -- (B2);
        \draw (A) -- (B4);
        
        \draw (C1) -- (D1);
        \draw (C2) -- (D2);
        \draw (C4) -- (D4);

        \draw (D1) -- (E1);
        \draw (D2) -- (E2);
        \draw (D4) -- (E4);
        
        \draw (E1) -- (F1);
    \end{tikzpicture}

    \caption{Non-zero contributions to \eqref{eqn:general_deg_formula}}\label{fig:deg_contributions}
\end{figure}
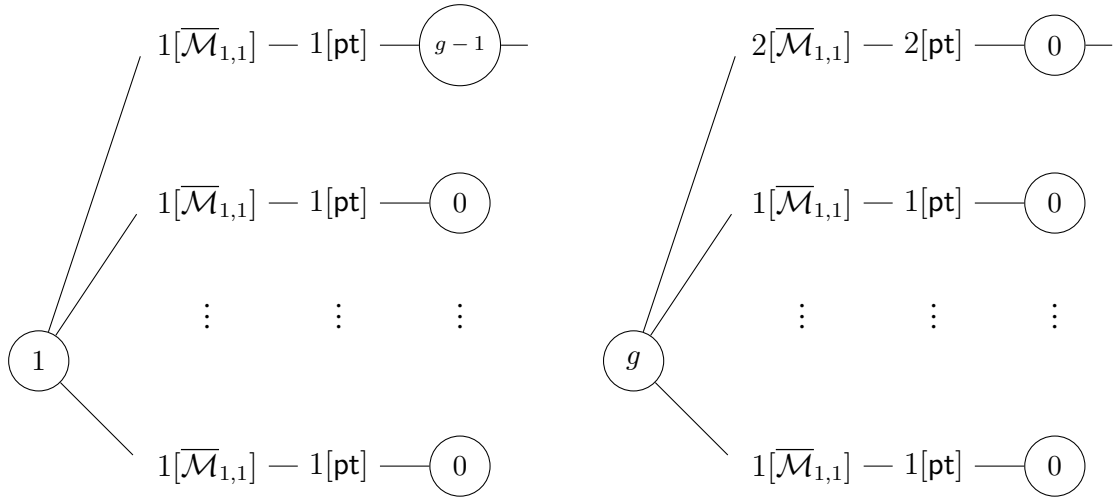
\noindent The vertices \eqref{eqn:vertex4} and \eqref{eqn:vertex6} contribution factors $1$ and $1/2$ respectively (the latter
is cancelled by the multiplicity which occurs in  the degeneration formula).

The above vertex analysis show that \eqref{eqn:general_deg_formula} specializes to:
$$
\langle \tau_1(\mathsf{p}_1)\lambda_g \lambda_{g-2}\rangle_{g,n}^{\pi, \circ} = 
\frac{\sigma_1(n)}{24} \langle \tau_1(\infty)\lambda_{g-1}\lambda_{g-2}|(1[\mathsf{pt}])\rangle_{g-1,1}^{\pi_P} +
\langle \lambda_g \lambda_{g-2}|(2,1^{n-2})\rangle_{g,n}^{\pi,\circ} 
\, .
$$
To complete the proof of Proposition \ref{prop:degeneration_form}, we must
evaluate
$$
\langle \tau_1(\infty)\lambda_{g-1}\lambda_{g-2}|(1[\mathsf{pt}])\rangle_{g-1,1}^{\pi_P} = \langle \tau_1(\infty)\lambda_{g-1}\lambda_{g-2}\rangle_{g-1,1}^{\mathbb P^1}\, ,
$$
where the equality with the absolute invariant is proven by the standard degeneration method.
To calculate, we localize with respect to the $\mathbb{C}^*$- action on $\mathbb P^1$. Since the integrand has a $\lambda_g \lambda_{g-1}$ insertion, there are only two components of the fixed locus that contribute, corresponding to the graphs in Figure \ref{fig:localization_P1},
see \cite{GP}.
\begin{figure}[ht]
    \centering
    \begin{tikzpicture}[scale = 0.7]
        \node[style={draw, circle, minimum size=0.8cm, font=\small}] (A) at (0,0) {0};
        \node[style={draw, circle, minimum size=0.8cm, font=\tiny}] (B) at (3,0) {$g-1$};
        
        \draw (A) -- (B);

        \draw[gray, thick] (B) -- ++(45:1.2);
    \end{tikzpicture}
    \hspace{2cm}
    \begin{tikzpicture}[scale = 0.7, every node/.style={draw, circle, minimum size=0.8cm, font=\small}]
        \node[style={draw, circle, minimum size=0.8cm, font=\tiny}] (A) at (0,0) {$g-1$};
        \node[style={draw, circle, minimum size=0.8cm, font=\small}] (B) at (3,0) {0};
        
        \draw (A) -- (B);

        \draw[gray, thick] (B) -- ++(45:1);
    \end{tikzpicture}
    \caption{Localization contributions, where half edges correspond to marked points, vertices are contracted components, and edges are non-contracted components.}\label{fig:localization_P1}
\end{figure}
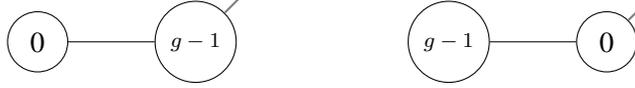

\noindent After expanding the localization formula, we conclude
\begin{equation*}
    \langle \tau_1(\infty)\lambda_{g-1}\lambda_{g-2}\rangle_{g-1,1}^{\mathbb P^1} = (2g-2) \int_{\Mbar_{g-1,1}} \frac{\lambda_{g-1}\lambda_{g-2}c(\mathbb E^\vee)}{1-\psi} = \frac{|B_{2g-2}|}{(2g-2)!}\, ,
\end{equation*}
where the Hodge integral on the right is calculated in \cite{P2}. \qed

\section{Reconstruction of multi-point invariants}\label{sec:reconstruction}

\subsection{Reduction to $1$-point series }

We recall the notation introduced in Section \ref{sechs}:
\begin{equation}\label{cvfre}
\blang D^{* k_1},...,D^{*k_\ell}\rangle_{1}^{\hilbnc} = \sum_{d=0}^\infty
\blang D^{* k_1},...,D^{*k_\ell}\rangle_{1,d}^{\hilbnc} q^d\, .
\end{equation}
In order to emphasize the number of insertions, we also write
$$
\blang D^{* k_1},...,D^{*k_\ell}\rangle_{1}^{\hilbnc}
= \blang D^{* k_1},...,D^{*k_\ell}\rangle^{\hilbnc}
_{1,\, {\fbox{{\tiny $\ell$}}}}\, ,
$$
where the boxed subscript indicates insertion number (not the curve degree as in \eqref{cvfre}).

We prove here the reduction of the genus $1$ Gromov-Witten theory of
$\hilbnc$ to $1$-point series as stated in Theorem \ref{Red1}:

\noindent {\em To every genus $1$ series $\blang D^{\star k_1},...,D^{\star k_\ell}\brang^{\hilbnc}_{1}$, there are canonically
associated functions
\begin{equation*}
\big\{ C_{k,m} \big\}_{0\leq k \leq |\mathsf{Part}(n)|-1,\,  0\leq m\leq \ell-1} \subset  \mathbb{Q}(t_1,t_2,q)    
\end{equation*}
 for which the following equation holds:
\begin{equation*}
\blang D^{* k_1},...,D^{*k_\ell}\rangle_{1}^{\hilbnc}=\sum_{k=0}^{|\mathsf{Part}(n)|-1}\, \sum_{m=0}^{\ell-1} \, C_{k,m}\cdot \left(q\frac{d}{dq}\right)^m\blang D^{*k}\brang_{1}^{\hilbnc}\, .    \end{equation*}}

\subsection{Proof of Theorem \ref{Red1}}\label{sec:proof_reconst}

We adapt the reconstruction strategy of \cite[Section 6]{w} to prove the result.
Let $m\geq 0$, and fix classes $$v_1,v_2,v_3,v_4, \phi_1,...,\phi_m\in H^*_\T(\hilbnc)\, .$$
Pulling back Getzler's relation from $\overline{\mathcal M}_{1,4}$ to $\overline{\mathcal M}_{1,4+m}(\hilbnc,d)$ and summing over the curve degree $d$, we obtain the following equation (see \cite[Equation (27)]{Lin})
\begin{equation*}
3\sum_{h\in S_4}\langle v_{h(1)}\star v_{h(2)}, v_{h(3)}\star v_{h(4)},\phi_1,...,\phi_m\rangle^{\hilbnc}_{1,\,\fbox{\tiny{2+m}}}
\, \sim\, 
4\sum_{h\in S_4}\langle v_{h(1)}\star v_{h(2)}\star v_{h(3)},v_{h(4)}, \phi_1,...,\phi_m\rangle^{\hilbnc}_{1,\,{\fbox{\tiny{2+m}}}}\, ,
\end{equation*}
where the relation $\mathsf{X}\sim\mathsf{Y}$ means that the difference $\mathsf{X}-\mathsf{Y}$ is an explicit sum of arbitrary genus $0$ invariants and  {\em genus $1$ invariants with at most $m+1$ insertions}.
Getzler's relation can be equivalently cast in the following form:
\begin{eqnarray*}
\Psi(v_1,v_2,v_3,v_4) & = &
\ \ \, \langle v_1\star v_2,v_3\star v_4,\phi_1,...,\phi_m\rangle^{\hilbnc}_{1,\,\fbox{\tiny{2+m}}}
+\langle v_1\star v_3,v_2\star v_4,\phi_1,...,\phi_m\rangle^{\hilbnc}_{1,\,\fbox{\tiny{2+m}}}\\
& &+\langle v_1\star v_4,v_2\star v_4,\phi_1,...,\phi_m\rangle^{\hilbnc}_{1,\,\fbox{\tiny{2+m}}}\\
& &-\langle v_1,v_2\star v_3\star v_4,\phi_1,...,\phi_m\rangle^{\hilbnc}_{1,\,\fbox{\tiny{2+m}}}
-\langle v_2,v_1\star v_3\star v_4,\phi_1,...,\phi_m\rangle^{\hilbnc}_{1,\,\fbox{\tiny{2+m}}}\\
& & -\langle v_3,v_1\star v_2\star v_4,\phi_1,...,\phi_m\rangle^{\hilbnc}_{1,\,\fbox{\tiny{2+m}}}
-\langle v_4,v_1\star v_2\star v_3,\phi_1,...,\phi_m\rangle^{\hilbnc}_{1,\,\fbox{\tiny{2+m}}}  
\end{eqnarray*}
satisfies the relation 
\begin{equation}\label{eqn:getzler_symbol}
\Psi(v_1,v_2,v_3,v_4)\sim 0\, .   
\end{equation}


Let
 $a, b\in H^{\leq 2}_\T(\hilbnc)$, and let
$\phi_1,...,\phi_m \in H^*_\T(\hilbnc)$. For integers $l\geq 2$ and  $0\leq i \leq l$ define the following series:
\begin{eqnarray*}
h(l)&=&\langle a\star b\star D^{*(l-2)}, D^{*2}, \phi_1,...,\phi_m\rangle^{\hilbnc}_{1,\, \fbox{\tiny{2+m}}}\, ,\\   
f(i)&=&\langle a\star D^{*i}, b\star D^{*(l-i)}, \phi_1,...,\phi_m\rangle^{\hilbnc}_{1,\, \fbox{\tiny{2+m}}}\, .
\end{eqnarray*}
From the string and divisor equations, we immediately obtain 
\begin{equation*}
f(0)\sim 0\, ,\quad f(l)\sim 0\, .    
\end{equation*}
By expanding the definitions, we see, for $0\leq k\leq l-2$,
\begin{eqnarray}\label{eqn:difference_inital}
 -\Psi(a\star D^{*k}, b\star D^{*(l-2-k)}, D,D) & = &
f(k+2)-2f(k+1)+f(k)-h(l) \\ \nonumber
& &\ \ \  +2\langle D,a\star b\star D^{*(l-1)},\phi_1,...,\phi_m\rangle^{\hilbnc}_{1,\, \fbox{\tiny{2+m}}}\, .
\end{eqnarray}
By applying \eqref{eqn:getzler_symbol} and the divisor equation,
we obtain a {\em difference} equation,
\begin{equation*}
f(k+2)-2f(k+1)+f(k)-h(l) \sim 0\, ,
\end{equation*}
from \eqref{eqn:difference_inital}.
A linear algebraic result \cite[Lemma 6.2]{w}
about solutions to 
the difference equation then yields  
\begin{equation}\label{eqn:difference1}
f(i)\sim -\frac{i(l-i)}{2}h(l) \, .
\end{equation}
for $0\leq i \leq l$.

We can write the relation \eqref{eqn:difference1} in the following form.
Let $l\geq 2$, and let $i+j =l$ for $i,j\geq 0$. Then,
\begin{equation}\label{eqn:differenceq}
f(i)\sim -\frac{ij}{2}h(l) \, .
\end{equation}
The case $j=2$ of \eqref{eqn:differenceq} with $b=1$ is
\begin{equation*}
\langle a\star D^{*i}, D^{*2},\phi_1,...,\phi_m\rangle^{\hilbnc}_{1,\, \fbox{\tiny{2+m}}}\sim -i\langle a\star D^{*i}, D^{*2},\phi_1,...,\phi_m\rangle^{\hilbnc}_{1,\, \fbox{\tiny{2+m}}}.      
\end{equation*}
The case $j=1$ of \eqref{eqn:differenceq} with $b=D$ is
\begin{equation*}
\langle a\star D^{*i}, D\star D^{*1},\phi_1,...,\phi_m\rangle^{\hilbnc}_{1,\, \fbox{\tiny{2+m}}}\sim -\frac{i}{2}\langle a\star D\star D^{*(i-1)}, D^{*2},\phi_1,...,\phi_m\rangle^{\hilbnc}_{1,\, \fbox{\tiny{2+m}}}.        
\end{equation*}
These two relations imply 
\begin{equation} \label{lww5}
\langle D^{*i}, D^{*2},\phi_1,...,\phi_m\rangle^{\hilbnc}_{1,\, \fbox{\tiny{2+m}}}\sim 0
\end{equation} for all $i\geq 1$.
The $i=0$ case, $\langle D^{*0}, D^{*2},\phi_1,...,\phi_m\rangle^{\hilbnc}_{1,\, \fbox{\tiny{2+m}}}\sim 0$ follows from the string equation.

As a consequence of \eqref{lww5}, we have $h(l)\sim 0$ for all $l\geq 2$.
Then,  relation \eqref{eqn:differenceq} yields
\begin{equation*}
\langle a\star D^{*i}, b\star D^{*j},\phi_1,...,\phi_m\rangle^{\hilbnc}_{1,\, \fbox{\tiny{2+m}}}\sim 0 \, 
\end{equation*}
for all $i,j\geq 0$ with $i+j\geq 2$. Again the $i+j=0$ and $i+j=1$
cases follow from the string equation. After setting $a=b=1$,
we conclude
\begin{equation} \label{workh}
\langle D^{*i}, D^{*j},\phi_1,...,\phi_m\rangle^{\hilbnc}_{1,\, \fbox{\tiny{2+m}}}\sim 0\, 
\end{equation}
for all $i,j\geq 0$.

Repeated use of 
relation \eqref{workh} shows that $\langle D^{*i}, D^{*j},\phi_1,...,\phi_m\rangle^{\hilbnc}_{1,\, \fbox{\tiny{2+m}}}$ can be expressed in terms of genus $0$ invariants and {\em $1$-point} genus $1$-functions. Moreover, since we use the divisor equation in the argument,  the resulting 
expressions are linear in $1$-point genus $1$ functions {\em and} their $q\frac{d}{dq}$ derivatives. By the genus $0$ reconstruction result in \cite{op}, the genus $0$ series can be written as rational functions in matrix coefficients of $\mathsf{M}^{\Hilb}_D(q)$.  The proof of Theorem \ref{Red1} is complete. \qed

\subsection{Givental's formula in genus 1}\label{sec:recon_multipt_giv}

Consider the (full) genus $1$ Gromov-Witten potential
\begin{equation*}
\mathcal{F}_1^{\hilbnc}(\mathsf{t},q)=\sum_{d\geq 0}\sum_{\ell\geq 0}\frac{q^d}{\ell!}\langle \mathsf{t},...,\mathsf{t}\rangle^{\hilbnc}_{1,\, \fbox{{\tiny $\ell$}}\, 
,d},\quad \mathsf{t}\in H_{\mathsf{T}}^*(\hilbnc)\, .    
\end{equation*}
We have, for $v_1,...,v_k\in H_{\mathsf{T}}^*(\hilbnc)$, 
\begin{equation*}
\langle v_1,...,v_k\rangle_{1}^{\hilbnc}=\nabla_{v_1}\nabla_{v_2}...\nabla_{v_k}\mathcal{F}_1^{\hilbnc}(\mathsf{t},q)\big|_{\mathsf{t}=0}\, . \end{equation*}
Since the quantum cohomology of $\hilbnc$ is semisimple, we can apply Givental's formula \cite{g1}:
\begin{equation*}\label{eqn:giv_g1_formula}
\nabla_v \mathcal{F}_1^{\hilbnc}(\mathsf{t},q)=\frac{1}{2}\sum_{i=1}^{|\mathsf{Part}(n)|} \RRR^1_{ii}\cdot \nabla_{v} u_i+\frac{1}{48} \nabla_v \log \Big(\, \prod_{i=1}^{|\mathsf{Part(n)}|} \Delta_i\Big)\, .    
\end{equation*}
We follow here the notation of \cite{g1}
and refer the reader to 
\cite{g1,LeeP} for an exposition:
\begin{enumerate}
\item[$\bullet$] $\RRR^1_{ii}$ are matrix coefficients of the first term of the classifying 
$\mathsf{R}$-matrix, $$\RRR=\mathsf{Id}+\RRR^1\cdot z+\RRR^2\cdot z^2+... \, ,$$
\item[$\bullet$] $u_1, \ldots, u_{|\mathsf{Part}(n)|}$ are the canonical coordinates,
\item[$\bullet$] $\Delta_1, \ldots, \Delta_{|\mathsf{Part}(n)|}$ are the inverses of the squares of the lengths of the corresponding idempotents $\epsilon_1, \ldots,
\epsilon_{|\mathsf{Part}(n)|}$.
\end{enumerate}
By Theorem \ref{Red1}, we need only consider the 1-point invariants $\langle D^{\star k}\rangle_1^{\hilbnc}$. Givental's formula then can be written as:
\begin{equation}\label{ppssw}
\la D^{\star k} \ra^{\hilbnc}_{1}=\frac{1}{2}\sum_{i=1}^{|\mathsf{Part}(n)|} \RRR^1_{ii}\, \big|_{\mathsf{t}=0}  \cdot \nabla_{D^{\star k}} u_i\, \big|_{\mathsf{t}=0}+\frac{1}{48}\nabla_{D^{\star k}} \log \Big(\, \prod_{i=1}^{|\mathsf{Part(n)|}} \Delta_i\Big)\, \big|_{\mathsf{t}=0}\, .    
\end{equation}


We explain first how to explicitly compute the functions $\Delta_i$. 
Denote by $\{e_1,..., e_{|\mathsf{Part}(n)|}\}$ the distinct eigenvalues \cite{op} of $D\star_{\mathsf{t}}$ and let $\{v_1,..., v_{\mathsf{Part}(n)}\}$ be the corresponding eigenvectors.{\footnote{Here, the
symbol $\star_{\mathsf{t}}$ denotes the big quantum product.}
We have
\begin{equation*}
v_i \star_{\mathsf{t}} v_j =\delta_{ij}c_i v_i \, ,    
\end{equation*}
so the idempotents are 
$\epsilon_i=v_i/c_i$.
By the Frobenius property, 
\begin{equation*}
c_i\langle v_i,1\rangle=\langle v_i\star_{\mathsf{t}} v_i,1\rangle=\langle v_i,v_i\rangle\, .    
\end{equation*}
We then compute:
\begin{equation*}
\Delta_i=\frac{1}{\langle \epsilon_i,\epsilon_i\rangle}=\frac{c_i^2}{\langle v_i,v_i\rangle}=\frac{1}{\langle v_i,v_i\rangle}\cdot \frac{\langle v_i,v_i\rangle^2}{\langle v_i,1\rangle^2}=\frac{\langle v_i,v_i\rangle}{\langle v_i,1\rangle^2}.
\end{equation*}
Since the eigenvectors $v_i$ are found by solving the equation 
$D\star_{\mathsf{t}} v_i=e_i v_i$, the components of $v_i$ are rational functions of $e_i$ with coefficients in the matrix coefficients of $D\star_{\mathsf{t}}$. Therefore,  the 
$\Delta_i$ are rational functions in the eigenvalues $e_i$ with coefficients in the field 
$\mathbb{Q}(t_1,t_2,q)[[\mathsf{t}]]$,
where
 the
matrix coefficients of $D\star_{\mathsf{t}}$ lie. 

To evaluate the term
$\nabla_{D^{\star k}} \log (\prod_i \Delta_i)$,
we need only evaluate symmetric rational functions in the derivatives of the eigenvalues $e_i$ with coefficients in 
$\mathbb{Q}(t_1,t_2,q)[[\mathsf{t}]]$.
By Proposition \ref{prop:sym_function} of Appendix \ref{AAAA},  these expressions lie in the field of rational functions of derivatives of the symmetric functions of $e_i$ with coefficients in
$\mathbb{Q}(t_1,t_2,q)[[\mathsf{t}]]$. The outcome is an explicit calculation
of $$\nabla_{D^{\star k}} \log (\prod_i \Delta_i)\big|_{\mathsf{t}=0} \in  \mathbb{Q}(t_1,t_2,q)\, .$$ 
The same argument can be used to calculate derivatives
$$\nabla_{v_1} \nabla_{v_2} \cdots \nabla_{v_k}\log (\prod_i \Delta_i)\,\big|_{\mathsf{t}=0} \in  \mathbb{Q}(t_1,t_2,q)\, .$$

Next, we consider the term
$\nabla_{D^{\star k}} u_i\, |_{\mathsf{t}=0}$ 
of  \eqref{ppssw}.
Since the eigenvalues of $D^{\star_{\mathsf{t}} k}\star_{\mathsf{t}}$
are simply the $k^{th}$ powers of
the eigenvalues of $D\star_{\mathsf{t}}$, we have
  $$\nabla_{D^{\star k}} u_i\, \big|_{\mathsf{t}=0}=\Big(\nabla_{D} u_i\, \big|_{\mathsf{t}=0}\Big)^k = e_i^k\, \big|_{\mathsf{t}=0}\, .$$ 
Hence, $\nabla_{D^{\star k}} u_i\, |_{\mathsf{t}=0}$ is also determined by the genus 0 theory of $\hilbnc$.

The difficulty in applying formula \eqref{ppssw} to 
calculate  
$\la D^{\star k} \ra^{\hilbnc}_{1}$
lies in controlling the $\mathsf{R}$-matrix terms. We will use our calculation of $\la D^{} \ra^{\hilbnc}_{1}$ to determine
$\mathsf{R}^1_{ii} \, |_{\mathsf{t}=0}$ 
up to the nondegeneracy of the Wronskian.

\subsection{Proof of Theorem \ref{Red2}}\label{sec:pf_Red2}

By the divisor equation,
$$
\la D,...,D\ra^{\hilbnc}_{1,\, \fbox{{\tiny $\ell$}}}=\left(q\frac{d}{dq}\right)^{\ell-1}\la D\ra^{\hilbnc}_{1}\, .
$$ 
We define 
\begin{equation*}
\delta_l=\la D,..,D\ra_{1,\,\fbox{\tiny{$\ell$}}}-\left(q\frac{d}{dq}\right)^{l-1}\frac{1}{48} \nabla_D \log \Big(\, \prod_{i=1}^{|\mathsf{Part(n)}|} \Delta_i\Big) \big|_{\mathsf{t}=0}.
\end{equation*}
By Theorem \ref{Equiv1} and the discussion in Section \ref{sec:recon_multipt_giv}, $\delta_l \in \mathbb{Q}(t_1,t_2,q)$ can be explicitly calculated.

By (\ref{ppssw}), we have, for $\ell\geq 1$,
\begin{equation}\label{eqn:derivative_giv}
\begin{split}
2\delta_\ell&=\left(q\frac{d}{dq}\right)^{\ell-1}\left(\sum_{i=1}^{|\mathsf{Part}(n)|} \RRR^1_{ii}\,  \big|_{\mathsf{t}=0}\cdot \nabla_{D} u_i\, \big|_{t=0}\right)\\
&=\sum_{i=1}^{|\mathsf{Part}(n)|}\sum_{k=0}^{\ell-1}\binom{\ell-1}{k} \left(q
\frac{d}{dq}\right)^k \RRR^1_{ii}\, \big|_{\mathsf{t}=0} \cdot \left(q\frac{d}{dq}\right)^{\ell-1-k} \nabla_D u_i\, \big|_{\mathsf{t}=0}\, .
\end{split}
\end{equation}

By the construction of the $\mathsf{R}$-matrix \cite{g1}, the derivatives $q\frac{d}{dq}\RRR_{ii}^1\, \big|_{\mathsf{t}=0}$ are given by
\begin{equation*}
q\frac{d}{dq}\RRR_{ii}^1\,\big|_{\mathsf{t}=0}=\sum_l \RRR_{il}^1\, \big|_{\mathsf{t}=0}\cdot \left( \nabla_D u_l\, \big|_{\mathsf{t}=0}-\nabla_D u_i\, \big|_{\mathsf{t}=0}\right)\cdot \RRR_{li}^1\,\big|_{\mathsf{t}=0}\, .
\end{equation*}
The off-diagonal terms $\RRR_{il}^1\, \big|_{\mathsf{t}=0}$ are computed by the equation
\begin{equation*}
\Psi^{-1}\cdot q\frac{d}{dq}\Psi=[\nabla_DU, \RRR^1]\, ,    
\end{equation*}
where $\nabla_DU$ is the diagonal matrix with diagonal entries $\nabla_D u_i\, \big|_{\mathsf{t}=0}$ and $\Psi$ is the matrix whose columns are normalized eigenvectors. The functions $q\frac{d}{dq}\RRR_{ii}^1\, \big|_{\mathsf{t}=0}$ are therefore rational functions of the
$q\frac{d}{dq}$ derivatives of the eigenvalues $e_i|_{\mathsf{t}=0}$ with coefficients in the field  $\mathbb{Q}(t_1,t_2,q)$, where
the matrix coefficients of 
$\mathsf{M}^{\hilbnc}_D(q)$ lie.
The higher order derivatives
 $$\left(q\frac{d}{dq}\right)^{k>1} \RRR_{ii}^1\, \big|_{\mathsf{t}=0}$$
 are rational functions of the $q\frac{d}{dq}$ derivatives of the eigenvalues
 $e_i|_{\mathsf{t}=0}$  with coefficients in $\mathbb{Q}(t_1,t_2,q)$.


We would like to calculate the diagonal terms $\RRR_{ii}^1\, \big|_{\mathsf{t}=0}$. Define the column vector 
\begin{equation*}
2\vec{\delta}=(2\delta_1,2\delta_2,...,2\delta_{|\mathsf{Part}(n)|})^T
\end{equation*}
of length $|\mathsf{Part}(n)|$ and the column vector
\begin{multline*}
\vec{r}=\left(\RRR^1_{11}\, \big|_{\mathsf{t}=0},...,\RRR^1_{|\mathsf{Part}(n)||\mathsf{Part}(n)|}\, \big|_{\mathsf{t}=0},
q\frac{d}{dq}\RRR^1_{11}\,\big|_{\mathsf{t}=0},..., q\frac{d}{dq}\RRR^1_{|\mathsf{Part}(n)||\mathsf{Part}(n)|}\, \big|_{\mathsf{t}=0},\right.
...,
\\[3pt]
\left. \big(q\frac{d}{dq}\big)^{|\mathsf{Part}(n)|-1}\RRR^1_{11}\, \big|_{\mathsf{t}=0},..., \big(q\frac{d}{dq}\big)^{|\mathsf{Part}(n)|-1}\RRR^1_{|\mathsf{Part}(n)||\mathsf{Part}(n)|}\, \big|_{\mathsf{t}=0}\right)^T 
\end{multline*}
of length $|\mathsf{Part}(n)|^2$.
Equation \eqref{eqn:derivative_giv} can be written as a matrix equation
\begin{equation}\label{eqn:der_giv_system1}
2\vec{\delta}=\mathsf{A}\, \vec{r}\, .
\end{equation}
Here, $\mathsf{A}$ is a $|\mathsf{Part}(n)|\times |\mathsf{Part}(n)|^2$ matrix of the shape
\begin{equation*}
\mathsf{A}=(\mathsf{W}\,|\,\widehat{\mathsf{W}}),   \end{equation*}
where $\mathsf{W}$ is the  $|\mathsf{Part}(n)|\times |\mathsf{Part}(n)|$ matrix given by
\begin{equation*}
\mathsf{W}=\left(\begin{array}{cccc}
\nabla_D u_1\, \big |_{\mathsf{t}=0} & \nabla_D u_2\, \big|_{\mathsf{t}=0} &\ldots & \nabla_D u_{|\mathsf{Part}(n)|}\, |_{\mathsf{t}=0}\\[10pt]
q\frac{d}{dq} \nabla_D u_1\, \big|_{\mathsf{t}=0} & q\frac{d}{dq} \nabla_D u_2\, \big|_{\mathsf{t}=0} &\ldots & q\frac{d}{dq} \nabla_D u_{|\mathsf{Part}(n)|}\, \big|_{\mathsf{t}=0}\\[10pt]\vdots& \vdots& \vdots& \vdots\\[10pt](q\frac{d}{dq})^{|\mathsf{Part}(n)|-1}\nabla_D u_1\, \big|_{\mathsf{t}=0} & (q\frac{d}{dq})^{|\mathsf{Part}(n)|-1}\nabla_D u_2\, \big|_{\mathsf{t}=0} &\ldots & (q\frac{d}{dq})^{|\mathsf{Part}(n)|-1} \nabla_D u_{|\mathsf{Part}(n)|} \, \big|_{\mathsf{t}=0}
\end{array}\right)
\end{equation*}
and 
$\widehat{\mathsf{W}}$
is an explicit
$|\mathsf{Part}(n)| \times (|\mathsf{Part}(n)|^2-|\mathsf{Part}(n)|)$ matrix with entries given by 
$$\binom{\ell-1}{k} \left(q\frac{d}{dq}\right)^{\ell-1-k} \nabla_D u_i\, \big|_{\mathsf{t}=0}\,$$
for $k>0$.
Since 
$\nabla_D u_i\, \big|_{\mathsf{t}=0}=e_i \, \big|_{\mathsf{t}=0}$,
the matrix coefficients of $\mathsf{A}$ are 
$q\frac{d}{dq}$
derivatives of the eigenvalues $e_i\,\big|_{\mathsf{t}=0}$.

The matrix
$\mathsf{W}$ is the {\em Wronskian} of the eigenvalues $e_i\, |_{\mathsf{t}=0}$ as functions of $\log(q)$. Since the eigenvalues are analytic functions of $\log(q)$, a result of Boecher (see \cite[Lemma 1.12]{vPS}) implies that the eigenvalues are linearly independent over $\mathbb{Q}(t_1,t_2)$ if and only if  $\det(\mathsf{W})$ is not identically $0$.

If $\det(\mathsf{W})$ is nonzero, then $\mathsf{W}$ is invertible.  Equation (\ref{eqn:der_giv_system1}) then implies
\begin{equation}\label{eqn:der_giv_system2}
\mathsf{W}^{-1}\, 2\vec{\delta}=\mathsf{W}^{-1} \mathsf{A}\,  \vec{r}.    
\end{equation}
The $|\mathsf{Part}(n)|\times |\mathsf{Part}(n)|^2$ matrix $\mathsf{W}^{-1} \mathsf{A}$ is of the form
\begin{equation*}
\mathsf{W}^{-1} \mathsf{A}=(\mathsf{I}_{|\mathsf{Part}(n)|\times |\mathsf{Part}(n)|}\, |\, \mathsf{W}^{-1} \widehat{\mathsf{W}})\, .
\end{equation*}
Via the identity matrix $\mathsf{I}_{|\mathsf{Part}(n)|\times |\mathsf{Part}(n)|}$, equation
\eqref{eqn:der_giv_system2}
yields equations for
 $$\RRR^1_{11}\,  \big|_{\mathsf{t}=0}\, ,\, ...\, ,\, \RRR^1_{|\mathsf{Part}(n)||\mathsf{Part}(n)|}\, \big|_{\mathsf{t}=0}$$ 
 in terms of the functions $\delta_1,...,\delta_{|\mathsf{Part}(n)|}$, the
 $q\frac{d}{dq}$ derivatives of the
 eigenvalues $e_i \, |_{\mathsf{t}=0}$, and the higher $q\frac{d}{dq}$ derivatives of the functions $\mathsf{R}^1_{ii}\, |_{\mathsf{t}=0}$.
 
 We have proven  that $\RRR^1_{11}\, \big|_{\mathsf{t}=0}\, , ... ,\, \RRR^1_{|\mathsf{Part}(n)||\mathsf{Part}(n)|}\, \big|_{\mathsf{t}=0}$ can be computed explicitly from 
\begin{equation*}\label{eqn:inv_div_insertion}
\Big\{\, \la D,...,D\ra^{\hilbnc}
_{1,\,\fbox{{\tiny $\ell$}}}
=\left(q\frac{d}{dq}\right)^{\ell-1}\la D\ra^{\hilbnc}_{1}\, 
\Big\}_{k=1}^{|\mathsf{Part}(n)|}
\end{equation*}
and rational functions of the $q\frac{d}{dq}$ derivatives of the  eigenvalues $e_i \, \big|_{\mathsf{t}=0}$ with coefficients in $\mathbb{Q}(t_1,t_2,q)$. \qed



\subsection{Proof of Theorem \ref{Red3}}
By construction,
the final expressions for
$$\RRR^1_{11}\, \big|_{\mathsf{t}=0}\, , \, ...\, ,\, \RRR^1_{|\mathsf{Part}(n)||\mathsf{Part}(n)|}\, \big|_{\mathsf{t}=0}$$
in terms of 
the 
$q\frac{d}{dq}$ derivatives of the  eigenvalues 
$$e_1 \, \big|_{\mathsf{t}=0}\, ,\, 
\ldots\, ,\,  e_{\mathsf{Part}(n)}\, \big|_{\mathsf{t}=0}$$ with coefficients in $\mathbb{Q}(t_1,t_2,q)$
are equivariant under
permutations of the
indices. Therefore,
after substitution in 
\begin{equation*}
\la D^{\star k} \ra^{\hilbnc}_{1}=\frac{1}{2}\sum_{i=1}^{|\mathsf{Part}(n)|} \RRR^1_{ii}\, \big|_{\mathsf{t}=0}  \cdot \nabla_{D^{\star k}} u_i\, \big|_{\mathsf{t}=0}+\frac{1}{48}\nabla_{D^{\star k}} \log \Big(\, \prod_{i=1}^{|\mathsf{Part(n)|}} \Delta_i\Big)\, \big|_{\mathsf{t}=0}\, ,
\end{equation*}
the series
$\la D^{\star k} \ra^{\hilbnc}_{1}$
are {\em symmetric}
rational functions of the $q\frac{d}{dq}$ derivatives of the  eigenvalues $e_i \, \big|_{\mathsf{t}=0}$ with coefficients in $\mathbb{Q}(t_1,t_2,q)$. After an application of Proposition \ref{prop:sym_function} of Appendix \ref{AAAA}, the series $\la D^{\star k} \ra^{\hilbnc}_{1}$
can be effectively reconstructed from $\blang D \brang_{1}^{\hilbnc}$ and $\mathsf{M}_D^{\hilbnc}(q)$. \qed

\subsection{The Wronskian}

We formulate the following nondegeneracy conjecture for the quantum cohomology of $\hilbnc$.

\begin{conjecture}\label{conj:wronskian}
For all $n\geq 1$, the Wronskian matrix $\mathsf{W}$ associated
to $\hilbnc$ is nondegenerate:
\begin{equation*}
\det(\mathsf{W})\neq 0.    
\end{equation*}
\end{conjecture}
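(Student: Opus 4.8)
The plan is to use Boecher's criterion recalled above: $\det(\mathsf{W})\neq 0$ is equivalent to the $\mathbb{Q}(t_1,t_2)$-linear independence of the eigenvalues $e_1(q),\dots,e_{N}(q)$ of $\mathsf{M}_D^{\hilbnc}(q)=\nabla_D u_i|_{\mathsf{t}=0}$, where $N=|\mathsf{Part}(n)|$; and to prove that independence by degenerating the equivariant parameters. Write $z=\log(-q)$. From the Okounkov--Pandharipande formula used in the proof of Lemma~\ref{lem: trace}, in the Nakajima basis one has
\[
\mathsf{M}_D^{\hilbnc}(q)=\tfrac{t_1+t_2}{2}\,\mathrm{diag}\big(g_\mu(z)\big)_{\mu\vdash n}+\tfrac{t_1t_2}{2}\,R-\tfrac12\,L,\qquad g_\mu(z)=\sum_i \mu_i^2\coth(\tfrac{\mu_i z}{2})-n\coth(\tfrac z2),
\]
where $R=\sum_{k,l}\alpha_{k+l}\alpha_{-k}\alpha_{-l}$ and $L=\sum_{k,l}\alpha_{-k-l}\alpha_k\alpha_l$ are independent of $q$ and strictly raise, respectively lower, the number of parts $\ell(\mu)$.

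Suppose, for contradiction, that $\sum_\mu c_\mu\,e_\mu(q)\equiv 0$ with $c_\mu\in\mathbb{Q}[t_1,t_2]$ not all zero. Since the eigenvalues are continuous in $t_2$ and (for generic $z$) distinct, we may specialize at $t_2=0$: the operator becomes block lower triangular for the filtration by $\ell(\mu)$, so its eigenvalues are its diagonal entries $\tfrac{t_1}{2}g_\mu(z)$, and the relation degenerates to $\sum_\mu c_\mu(t_1,0)\,g_\mu(z)\equiv 0$. The functions $\{1,\coth(z/2),\coth(z),\coth(3z/2),\dots\}$ are linearly independent over $\mathbb{C}$ — they are unitriangular in $\{1,e^{-z},e^{-2z},\dots\}$ for $\mathrm{Re}(z)\gg0$ — so, writing each $g_\mu$ in this basis, the only relations $\sum_\mu a_\mu g_\mu\equiv 0$ are those with $\sum_\mu a_\mu m_j(\mu)=0$ for all $2\le j\le n$, where $m_j(\mu)$ is the number of parts of $\mu$ equal to $j$. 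Hence $\big(c_\mu(t_1,0)\big)_\mu$ lies in the explicit $(N-n+1)$-dimensional kernel $K$ of these $n-1$ functionals; $K\neq 0$ in general (for instance $g_{1^n}\equiv 0$ always, and $g_{(2,2)}=2g_{(2,1,1)}$ when $n=4$).

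To finish one must show that passing beyond the $t_2=0$ limit forces $c_\mu\equiv 0$. Expanding $e_\mu(q;t_1,t_2)=\tfrac{t_1}{2}g_\mu(z)+t_2\,h_\mu(z,t_1)+O(t_2^2)$ and extracting the coefficient of $t_2^1$ in the relation restricted to $K$ yields a new linear system; the functions $h_\mu$ are computed by first-order perturbation theory (degenerate along the $\ell$-blocks) from the displayed formula, and produce Hodge-type integrals of the sort appearing in Section~\ref{sec:general_hodge}. Iterating in powers of $t_2$ — or, more cleanly, combining this with the singularity structure of $e_\mu$ in $z$ (namely $\mathsf{M}_D^{\hilbnc}$ has simple poles at $z=2\pi i a/m$, $m\ge2$, $\gcd(a,m)=1$, with $\mathrm{Res}=(t_1+t_2)\sum_{m\mid r}\alpha_{-r}\alpha_r$, so that the residues of the eigenvalues there form the multiset $\{(t_1+t_2)\sum_{i:\,m\mid\mu_i}\mu_i:\mu\vdash n\}$) — should cut $K$ down to $0$, after which the usual descent on divisibility by $t_2$ gives the contradiction and proves the conjecture.

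I expect this last step to be the genuine obstacle. The only specializations admitting a closed-form spectrum are $t_2\to 0$, $t_1\to 0$, and $t_2\to -t_1$ (where $\mathsf{M}_D^{\hilbnc}$ becomes $q$-independent), and all of them are too coarse: the $t_2=0$ eigenvalues see $\mu$ only through the multiplicities $m_j(\mu)$, losing precisely the content information that distinguishes partitions such as $(2,2)$ and $(2,1,1)$. A proof therefore seems to require either a usable closed form for the quantum spectrum of $\mathsf{M}_D^{\hilbnc}$ at generic $(t_1,t_2)$ — going beyond the semisimplicity and the explicit operator recorded here, presumably via the Bethe-ansatz / Calogero--Sutherland integrable structure underlying \cite{op} — or delicate control of the degenerate perturbation series off the coordinate axes; the former would be the more decisive route.
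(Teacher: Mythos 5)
First, a point of orientation: the statement you are proving is stated in the paper as a \emph{conjecture}, not a theorem. The paper gives no proof; it only verifies $\det(\mathsf{W})\neq 0$ by computer for $n\leq 7$ and records the reformulation (via Boecher's lemma, \cite[Lemma 1.12]{vPS}) that nondegeneracy of $\mathsf{W}$ is equivalent to the $\mathbb{Q}(t_1,t_2)$-linear independence of the eigenvalues $e_1|_{\mathsf{t}=0},\ldots,e_{|\mathsf{Part}(n)|}|_{\mathsf{t}=0}$ of $\mathsf{M}_D^{\hilbnc}$. Your opening reduction is exactly this reformulation, so up to that point you are aligned with the paper.

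Beyond that, however, your argument has a genuine gap, and it is the one you yourself flag. The specialization $t_2=0$ makes the operator triangular for the filtration by $\ell(\mu)$, so the degenerate eigenvalues are the diagonal entries $\tfrac{t_1}{2}g_\mu(z)$; but these depend on $\mu$ only through the multiplicities $m_j(\mu)$ of parts $j\geq 2$ weighted by $j^2\coth(jz/2)$, so the space of relations $\sum_\mu a_\mu g_\mu\equiv 0$ has dimension at least $|\mathsf{Part}(n)|-(n-1)$, which is positive already for $n=4$ (your own example $g_{(2,2)}=2g_{(2,1,1)}$, $g_{(1^n)}=0$). The steps that would cut this kernel to zero --- the order-$t_2$ degenerate perturbation expansion, or the residue analysis of the eigenvalue branches at $z=2\pi i a/m$ --- are only proposed, not carried out, and neither comes with an argument that the resulting functionals are independent of the ones already imposed. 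In particular, the residues you write down again depend on $\mu$ only through which parts are divisible by $m$, so it is not clear they separate, e.g., $(2,2)$ from $(2,1,1)$ either; one would need to track which analytic branch of the spectrum carries which residue, and the branches are only canonically labelled by partitions at $q=0$. So the proposal is a reasonable plan of attack but not a proof; as far as the paper is concerned, the statement remains open, supported by the numerical verification for $n\leq 7$.
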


We have verified
Conjecture \ref{conj:wronskian}
for $n\leq 7$ by computer
calculations. As discussed in Section \ref{sec:pf_Red2}, Conjecture \ref{conj:wronskian}
can reformulated as the assertion that the
eigenvalues 
$$e_1 \, \big|_{\mathsf{t}=0}\, ,\, 
\ldots\, ,\,  e_{\mathsf{Part}(n)}\, \big|_{\mathsf{t}=0}$$ 
are {\em linearly independent} over
$\mathbb{Q}(t_1,t_2)$.

\section{Calculations}

\noindent $\bullet$
For all $n$, the series $\la (1^n)\ra_1^{\hilbnc}$ has only a constant term in $q$. The calculation was already discussed in Section \ref{sec:modularity}:
\begin{equation} \label{kked}
 \la (1^n)\ra_1^{\mathsf{Hilb}^{n}(\mathbb{C}^2)} = 
 \text{Coeff}_{Q^n}\Big[
 -\frac{1}{24}
\frac{t_1+t_2}{t_1t_2}\cdot \mathcal{P}(Q) \log \mathcal{P}(Q)\Big]\, .
\end{equation}

\vspace{8pt}
\noindent $\bullet$
For all $n$, the series $\la (2, 1^{n-2})\ra_1^{\hilbnc}=-\la \, D\, \ra_1^{\hilbnc}$ is evaluated by Theorem \ref{Equiv1},

\begin{equation} \label{lled}
\la (2,1^{n-2})  \ra_{1}^{\mathsf{Hilb}^n(\mathbb{C}^2)} = 
\frac{1}{24} \frac{(t_1+t_2)^2}{t_1t_2}
\left( \mathsf{Tr}_n + \sum_{k=2}^{n-1} {\sigma_{-1}(n-k)} \mathsf{Tr}_k \right) \, .
\end{equation}

For $2\leq n \leq 5$ and every partition 
$\mu$ of $n$,
we present here closed formulas for the 1-point series 
$$\la\, \mu\,\ra_1^{\hilbnc}\in \mathbb{Q}(t_1,t_2,q)\, .$$
We use a combination of inputs: the full genus 0 theory, the evaluations \eqref{kked} and \eqref{lled}, and Getzler's equation. Once the
1-point series are known, Theorem \ref{Red1} effectively determines the full genus 1 Gromov-Witten theory of $\hilbnc$.

While Givental's formula was used in Section \ref{sec:pf_Red2} to prove a structural reconstruction result, calculations are more efficiently obtained from the known series by Getzler's equation. 

\vspace{8pt}
\noindent $\bullet$ For $n=2$, we have:
\begin{equation*}
\begin{split}
\la (1,1)\ra_{1}^{\mathsf{Hilb}^2(\mathbb{C}^2)}&=-\frac{1}{24}\cdot \frac{t_1+t_2}{t_1t_2}\cdot \frac{5}{2}\, ,\\[5pt]    
\langle \, (2)\,  \rangle_{1}^{\mathsf{Hilb}^2(\mathbb{C}^2)} &=
\frac{1}{24}\frac{(t_1+t_2)^2}{t_1t_2} \cdot \frac{q+1}{q-1}\, .
\end{split}
\end{equation*}

\noindent $\bullet$ For $n=3$, we have:

{\tiny
\begin{equation*}
\begin{split}
\la (1,1,1)\ra_{1}^{\mathsf{Hilb}^3(\mathbb{C}^2)}&=-\frac{1}{24}\cdot \frac{t_1+t_2}{t_1t_2}\cdot\frac{29}{6}\, ,\\[5pt]
\langle \, (2,1)\,  \rangle_{1}^{\mathsf{Hilb}^3(\mathbb{C}^2)}
&=\frac{1}{24}\cdot \frac{(t_1+t_2)^2}{t_1t_2} \cdot \frac{5q^3-3q^2-3q+5}{(q-1)(q^2-q+1)}\, ,\\[5pt]
\la (3)\ra_{1}^{\mathsf{Hilb}^3(\mathbb{C}^2)}&= \frac{-1}{12}\frac{(t_1+t_2)}{t_1t_2}\frac{(t_1^2+\frac{1}{3}t_1t_2+t_2^2)(q^4+1)-\frac{1}{2}(t_1^2-\frac{17}{3}t_1t_2+t_2^2)(q^3+q)-(3t_1^2+13t_1t_2+3t_2^2)q^2}{(q^2-q+1)^2}\, .    
\end{split}
\end{equation*}
}

\vspace{8pt}
\noindent $\bullet$ For $n=4$, we have:

{\tiny
\begin{align*}
\la (1,1,1,1)\ra_{1}^{\mathsf{Hilb}^4(\mathbb{C}^2)}=&\,-\frac{1}{24}\cdot \frac{t_1+t_2}{t_1t_2}\cdot\frac{109}{12}\, ,\\[5pt]
\langle \, (2,1,1)\,  \rangle_{1}^{\mathsf{Hilb}^4(\mathbb{C}^2)}=&\,\frac{1}{24}\cdot\frac{(t_1+t_2)^2}{t_1t_2}\cdot\frac{35q^5-28q^4+23q^3+23q^2-28q+35}{2(q-1)(q^2+1)(q^2-q+1)}\, ,\\[5pt]
\hspace{-20pt} \langle (2,2)\rangle_{1}^{\mathsf{Hilb}^4(\mathbb{C}^2)}
=&\,\frac{-(t_1+t_2)}{t_1t_2}\cdot  
\frac{3}{16(q^2+1)^2(q-1)^2} \cdot
\\
&\ \left((t_1^2+\frac{1}{18}t_1t_2+t_2^2)(q^6+1)+(\frac{2}{9}t_1^2+5t_1t_2+\frac{2}{9}t_2^2)(q^5+q) \right. \\
&
\left.+(\frac{11}{9}t_1^2-\frac{77}{18}t_1t_2+\frac{11}{9}t_2^2)(q^4+q^2)+(4t_1^2+\frac{170}{9}t_1t_2+4t_2^2)q^3\right)\, ,\\[50pt]
\langle (3,1)\rangle_{1}^{\mathsf{Hilb}^4(\mathbb{C}^2)}
=&\,\frac{-(t_1+t_2)}{t_1t_2}\cdot\frac{1}{2(q^2-q+1)^2(q^2+1)^2}\cdot\\[5pt]
&\left((t_1^2+\frac{23}{36}t_1t_2+t_2^2)(q^8+1)-(\frac{5}{6}t_1^2-\frac{41}{36}t_1t_2+\frac{5}{6}t_2^2)(q^7+q)+(\frac{1}{3}t_1^2-\frac{257}{36}t_1t_2+\frac{1}{3}t_2^2)(q^6+q^2)\right.\\
&\left.+(\frac{5}{6}t_1^2+\frac{137}{12}t_1t_2+\frac{5}{6}t_2^2)(q^5+q^3)-(\frac{8}{3}t_1^2+\frac{170}{6}t_1t_2+\frac{8}{3}t_2^2)q^4\right)\, ,\\[5pt]
\langle (4)\rangle_{1}^{\mathsf{Hilb}^4(\mathbb{C}^2)}
=&\, \frac{(t_1+t_2)^2}{t_1t_2}\cdot\frac{(q+1)}{4(q^2-q+1)(q^2+1)^3(q-1)}\cdot\\[5pt]
&\left((t_1^2-\frac{5}{6}t_1t_2+t_2^2)(q^8+1)-(\frac{5}{3}t_1^2-\frac{25}{6}t_1t_2 +\frac{5}{3}t_2^2)(q^7+q)+(2t_1^2-13t_1t_2+2t_2^2)(q^6+q^2)\right.\\[5pt]
&\left.+(3t_1^2+\frac{69}{2}t_1t_2+3t_2^2)(q^5+q^3)-(\frac{10}{3}t_1^2+39t_1t_2+\frac{10}{3}t_2^2)q^4\right)\, .
\end{align*}
}

\vspace{8pt}
\noindent $\bullet$ For $n=5$, we have:
\begin{equation*}
\la 1\ra_{1}^{\mathsf{Hilb}^5(\mathbb{C}^2)}=-\frac{1}{24}\cdot \frac{t_1+t_2}{t_1t_2}\cdot \frac{907}{60}\,.    
\end{equation*}

\noindent We will write remaining series in terms of the traces{\footnote{The subscript $m$ of $\mathsf{Tr}_m^{\mu}$ is redundant since $m=|\mu|$, but is included for clarity.}} of quantum multiplication,
\begin{equation*}
\mathsf{Tr}_m^{\mu}=\mathsf{trace}\left(\,  |\mu\rangle\, \star : QH_\T^*(\mathsf{Hilb}^m(\mathbb{C}^2))\to QH_\T^*(\mathsf{Hilb}^m(\mathbb{C}^2))\, \right)\, .  
\end{equation*}
The trace which appears in Theorem \ref{Equiv1} can be written as
$$\mathsf{Tr}_m = -\frac{1}{t_1+t_2} \mathsf{Tr}_m^{(2,1^{m-2})}\, .$$
Then, we have:

{\tiny
\begin{align*}
\langle (2,1,1,1)\rangle_{1}^{\mathsf{Hilb}^5(\mathbb{C}^2)}=&\, -\frac{t_1+t_2}{18t_1t_2}\mathsf{Tr}_2^{(2)}-\frac{t_1+t_2}{16t_1t_2}\mathsf{Tr}_3^{(2,1)}-\frac{t_1+t_2}{24t_1t_2}\mathsf{Tr}_4^{(2,1,1)}-\frac{t_1+t_2}{24t_1t_2}\mathsf{Tr}_5^{(2,1,1,1)}\\
=&\, \frac{1}{24}\cdot \frac{(t_1+t_2)^2}{t_1t_2} \cdot 
{{\left(\frac{272q^9-539q^8+760q^7-629q^6+302q^5+302q^4-629q^3+760q^2-539q+272}{6(q-1)(q^2+1)(q^2-q+1)(q^4-q^3+q^2-q+1)}\right)}}\, ,\\[5pt]
%
\langle (2,2,1)\rangle_{1}^{\mathsf{Hilb}^5(\mathbb{C}^2)}=&\,\frac{775t_1^2+733t_1t_2+775t_2^2}{1200(t_1+t_2)}+\frac{1}{200(t_1+t_2)}\mathsf{Tr}_3^{(3)}+\frac{-10t_1^2-13t_1t_2-10t_2^2}{240t_1t_2(t_1+t_2)}\mathsf{Tr}_4^{(2,2)}\\[5pt]
&+\frac{-3}{200(t_1+t_2)}\mathsf{Tr}_4^{(3,1)}+\frac{-25t_1^2-68t_1t_2-25t_2^2}{600t_1t_2(t_1+t_2)}\mathsf{Tr}_5^{(2,2,1)}+\frac{1}{200(t_1+t_2)}\mathsf{Tr}_5^{(3,1,1)}\\[5pt]
&+\frac{50t_1^2-139t_1t_2+50t_2^2}{2400t_1t_2(t_1+t_2)}\mathsf{Tr}_2^{(2)}\cdot \mathsf{Tr}_2^{(2)}+\frac{-25t_1^2-44t_1t_2-25t_2^2}{600t_1t_2(t_1+t_2)}\mathsf{Tr}_2^{(2)}\cdot \mathsf{Tr}_3^{(2,1)}+\frac{1}{100(t_1+t_2)}\mathsf{Tr}_2^{(2)}\cdot \mathsf{Tr}_5^{(2,1,1,1)}\, ,\\[5pt]
%
\langle (3,1,1)\rangle_{1}^{\mathsf{Hilb}^5(\mathbb{C}^2)}=&\, \frac{175t_1^2+77t_1t_2+175t_2^2}{300(t_1+t_2)}+\frac{-225t_1^2-532t_1t_2-225t_2^2}{3600t_1t_2(t_1+t_2)}\mathsf{Tr}_3^{(3)}+\frac{1}{120(t_1+t_2)}\mathsf{Tr}_4^{(2,2)}+\frac{-25t_1^2-59t_1t_2-25t_2^2}{600t_1t_2(t_1+t_2)}\mathsf{Tr}_4^{(3,1)}\\[5pt]
&+\frac{-3}{100(t_1+t_2)}\mathsf{Tr}_5^{(2,2,1)}+\frac{-25t_1^2-47t_1t_2-25t_2^2}{600t_1t_2(t_1+t_2)}\mathsf{Tr}_5^{(3,1,1)}+\frac{-2}{75(t_1+t_2)}\mathsf{Tr}_2^{(2)}\cdot \mathsf{Tr}_2^{(2)}\\[5pt]
&+\frac{1}{100(t_1+t_2)}\mathsf{Tr}_2^{(2)}\cdot \mathsf{Tr}_3^{(2,1)}+\frac{1}{100(t_1+t_2)}\mathsf{Tr}_2^{(2)}\cdot \mathsf{Tr}_5^{(2,1,1,1)}\, ,\\[25pt]
%
\langle (3,2)\rangle_{1}^{\mathsf{Hilb}^5(\mathbb{C}^2)}
=&\,\frac{865t_1^2+1556t_1t_2+865t_2^2}{1800(t_1+t_2)}\mathsf{Tr}_2^{(2)}+\frac{-158t_1^2-91t_1t_2-158t_2^2}{900(t_1+t_2)}\mathsf{Tr}_3^{(2,1)}+\frac{20t_1^2+79t_1t_2+20t_2^2}{450(t_1+t_2)}\mathsf{Tr}_4^{(2,1,1)}+\frac{-1}{300(t_1+t_2)}\mathsf{Tr}_4^{(4)}\\[5pt]
&+\frac{35t_1^2-77t_1t_2+35t_2^2}{900(t_1+t_2)}\mathsf{Tr}_5^{(2,1,1,1)}+\frac{-25t_1^2-77t_1t_2-25t_2^2}{600t_1t_2(t_1+t_2)}\mathsf{Tr}_5^{(3,2)}+\frac{1}{300(t_1+t_2)}\mathsf{Tr}_5^{(4,1)}+\frac{-25t_1^2-39t_1t_2-25t_2^2}{600t_1t_2(t_1+t_2)}\mathsf{Tr}_2^{(2)}\cdot \mathsf{Tr}_3^{(3)}\\[5pt]
&+\frac{-23}{1800(t_1+t_2)}\mathsf{Tr}_2^{(2)}\cdot \mathsf{Tr}_4^{(3,1)}+\frac{4}{225(t_1+t_2)}\mathsf{Tr}_2^{(2)}\cdot \mathsf{Tr}_4^{(2,2)}+\frac{-4}{225(t_1+t_2)}\mathsf{Tr}_2^{(2)}\cdot \mathsf{Tr}_5^{(2,2,1)}+\frac{23}{1800(t_1+t_2)}\mathsf{Tr}_2^{(2)}\cdot \mathsf{Tr}_5^{(3,1,1)}\, ,\\[5pt]
\langle (4,1)\rangle_{1}^{\mathsf{Hilb}^5(\mathbb{C}^2)}
=&\, \frac{-430t_1^2+743t_1t_2-430t_2^2}{2400(t_1+t_2)}\mathsf{Tr}_2^{(2)}+\frac{-461t_1^2+503t_1t_2-461t_2^2}{4800(t_1+t_2)}\mathsf{Tr}_3^{(2,1)}+\frac{925t_1^2+1771t_1t_2+925t_2^2}{4800(t_1+t_2)}\mathsf{Tr}_4^{(2,1,1)}\\[5pt]
&+\frac{-100t_1^2-223t_1t_2-100t_2^2}{2400t_1t_2(t_1+t_2)}\mathsf{Tr}_4^{(4)}+\frac{-113t_1t_2}{600(t_1+t_2)}\mathsf{Tr}_5^{(2,1,1,1)}+\frac{-1}{50(t_1+t_2)}\mathsf{Tr}_5^{(3,2)}+\frac{-25t_1^2-63t_1t_2-25t_2^2}{600t_1t_2(t_1+t_2)}\mathsf{Tr}_5^{(4,1)}\\[5pt]
&+\frac{1}{600(t_1+t_2)}\mathsf{Tr}_2^{(2)}\cdot \mathsf{Tr}_3^{(3)}+\frac{-11}{600(t_1+t_2)}\mathsf{Tr}_2^{(2)}\cdot \mathsf{Tr}_4^{(3,1)}+\frac{7}{300(t_1+t_2)}\mathsf{Tr}_2^{(2)}\cdot \mathsf{Tr}_4^{(2,2)}\\[5pt]
&+\frac{-7}{300(t_1+t_2)}\mathsf{Tr}_2^{(2)}\cdot \mathsf{Tr}_5^{(2,2,1)}+\frac{11}{600(t_1+t_2)}\mathsf{Tr}_2^{(2)}\cdot \mathsf{Tr}_5^{(3,1,1)}\, ,\\[5pt]
%
\langle (5)\rangle_{1}^{\mathsf{Hilb}^5(\mathbb{C}^2)}
=&\,\frac{t_1t_2(110t_1^2-1391t_1t_2+110t_2^2)}{600(t_1+t_2)}+\frac{-75t_1^2-53t_1t_2-75t_2^2}{300(t_1+t_2)}\mathsf{Tr}_3^{(3)}+\frac{-50t_1^2+77t_1t_2-50t_2^2}{600(t_1+t_2)}\mathsf{Tr}_4^{(2,2)}\\[5pt]
&+\frac{-25t_1^2-59t_1t_2-25t_2^2}{300(t_1+t_2)}\mathsf{Tr}_5^{(2,2,1)}+\frac{75t_1^2+53t_1t_2+75t_2^2}{600(t_1+t_2)}\mathsf{Tr}_5^{(3,1,1)}+\frac{-5t_1^2-16t_1t_2-5t_2^2}{120t_1t_2(t_1+t_2)}\mathsf{Tr}_5^{(5)}\\[5pt]
&+\frac{175t_1^2+94t_1t_2+175t_2^2}{150(t_1+t_2)}\mathsf{Tr}_2^{(2)}\cdot \mathsf{Tr}_2^{(2)}+\frac{44t_1^2+45t_1t_2+44t_2^2}{120(t_1+t_2)}\mathsf{Tr}_2^{(2)}\cdot \mathsf{Tr}_3^{(2,1)}+\frac{-60t_1^2-53t_1t_2-60t_2^2}{200(t_1+t_2)}\mathsf{Tr}_2^{(2)}\cdot \mathsf{Tr}_4^{(2,1,1)}\\[5pt]
&+\frac{15t_1^2+23t_1t_2+15t_2^2}{300(t_1+t_2)}\mathsf{Tr}_2^{(2)}\cdot \mathsf{Tr}_5^{(2,1,1,1)}\, .
\end{align*}
}

\vspace{8pt}
\noindent $\bullet$ There is no obstruction (apart from expected nondegeneracies) to extending the above tables to higher $n$. 
Whether further structures can be found in the $1$-points series  $\la\, \mu\,\ra_1^{\hilbnc}$ is an interesting open question. 
\appendix

\section{On symmetric functions}\label{AAAA}
Let $\mathbb{K}$ be a field of
characteristic 0.
Let $\mathbf{z}=(z_1,\ldots,z_m)$ be vector of $m$ variables. 
Let $f_1(\mathbf{z}), \ldots, f_n(\mathbf{z})$ be $n$ abstract functions, and let
\begin{equation*}
    s_k(\mathbf{z}) = (-1)^k\sum_{I \subseteq [n], |I|=k} \prod_{i \in I} f_i
\end{equation*}
be the elementary symmetric polynomials in $f_1,...,f_n$. 
The {\em discriminant} is defined by  
\begin{equation*}
    \Delta(\mathbf{z}) = \prod_{i \neq j} (f_i-f_j)\, .
\end{equation*}
We consider the following algebras:

\vspace{8pt}
\noindent
$\bullet$  The standard algebra of symmetric functions,
$$\mathsf{Sym} = \mathbb K[f_1, \ldots , f_n]^{S_n} = \mathbb K[s_1, \ldots , s_n]\, ,$$
is defined by taking invariants of the $S_n$-action on $\mathbb K[f_1, \ldots , f_n]$. The $S_n$-action is defined
by permuting the indices of $f_i$.
By construction, $\Delta \in \mathsf{Sym}$.

\vspace{8pt}
\noindent $\bullet$ 
Let $\mathsf{Df}=\big \{ \frac{\partial^{a_1+\ldots +a_m}}{\partial z_1^{a_1}\cdots \partial z_m^{a_m}} f_i\big\}_{(a_1,\ldots,a_m)\in (\mathbb{Z}_{\geq 0})^m}$
be the set of all partial derivatives of the functions $f_1,\ldots,f_n$,
$$\mathsf{Df} =
\big\{ f_1, \ldots, f_n, \ldots, \frac{\partial f_i}{\partial z_j}, \ldots, \frac{\partial^2 f_i}{\partial z_{j_1} \partial z_{j_2}},
\ldots  \big\}\, .$$
The algebra $\mathbb K[   \mathsf{Df}        ]$ of polynomials{\footnote{While $\mathsf{Df}$ is an infinite set of functions, only finitely many appear
in any given polynomial.}} in the functions
$\mathsf{Df}$ carries an $S_n$-action defined by permuting{\footnote{$S_n$ does {\em not} act on the variables $z_j$ nor on the operators $\frac{\partial}{\partial z_j}$.}} the indices of $f_i$.
Let $\mathsf{SymDf}$ be  the algebra of $S_n$-invariants:
    $$\mathsf{SymDf} =\mathbb K[   \mathsf{Df}        ]^{S_n}\, . $$

\vspace{8pt}
\noindent $\bullet$ 
Let $\mathsf{Ds}=\big \{ \frac{\partial^{a_1+\ldots +a_m}}{\partial z_1^{a_1}\cdots \partial z_m^{a_m}} s_i\big\}_{(a_1,\ldots,a_m)\in (\mathbb{Z}_{\geq 0})^m}$
be the set of all partial derivatives of the elementary symmetric functions $s_1, \ldots, s_n$.
Let $\mathbb{K}[\mathsf{Ds}]$ be the algebra of  polynomials in the functions $\mathsf{Ds}$.
   

We present a proof of the following result (which is likely known to experts, but we
were unable to find a reference).

\begin{prop}\label{prop:sym_function}
    $
        \mathsf{SymDf} \subseteq \mathbb{K}[\mathsf{Ds}][1/\Delta]
    $ .
\end{prop}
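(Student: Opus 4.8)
The plan is to reduce the statement to a Galois-theoretic fact about the field extension generated by the $f_i$ over the field of symmetric rational functions, together with a descent argument for the derivatives. First I would set $K_0 = \mathbb{K}(z_1,\ldots,z_m)$ and work in the field $L = K_0(f_1,\ldots,f_n)$, where the $f_i$ are treated as algebraically independent transcendentals over $K_0$ (or, if there are relations among them, as generic elements — the argument only uses that $S_n$ acts faithfully by permuting them). The subfield $F = L^{S_n}$ of $S_n$-invariants is, by the standard theory of symmetric functions, generated over $K_0$ by $s_1,\ldots,s_n$; moreover $L/F$ is a Galois extension with group $S_n$, and the primitive element can be taken so that the discriminant $\Delta$ is a nonzero element of $F$.

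The heart of the argument is that differentiation interacts well with this Galois action because $S_n$ acts trivially on the variables $z_j$, hence commutes with each $\partial/\partial z_j$. Concretely, I would argue as follows. Each $f_i$ satisfies the universal polynomial $T^n + s_1 T^{n-1} + \cdots + s_n = \prod_j (T - f_j)$, and differentiating this relation with respect to $z_k$ shows, via $\prod_{j\neq i}(f_i - f_j) \neq 0$ after inverting $\Delta$, that $\partial f_i/\partial z_k$ is a $K_0[\mathsf{Ds}][1/\Delta]$-linear combination of powers of $f_i$ — explicitly,
\begin{equation*}
\frac{\partial f_i}{\partial z_k} = -\frac{\sum_{\ell=1}^{n} f_i^{n-\ell}\,\partial s_\ell/\partial z_k}{\prod_{j\neq i}(f_i - f_j)}\, .
\end{equation*}
Iterating, every element of $\mathsf{Df}$ lies in the ring $R = K_0[f_1,\ldots,f_n][1/\Delta]$, and more precisely every partial derivative of $f_i$ is a polynomial in $f_i$ alone with coefficients in $\mathbb{K}[\mathsf{Ds}][1/\Delta]$ (one proves this by induction on the order of the derivative, using that $\Delta$ and the $\prod_{j\neq i}(f_i-f_j)$ are themselves handled by the symmetric-function calculus). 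Consequently $\mathbb{K}[\mathsf{Df}] \subseteq R$, and $R$ is a free module over $\mathbb{K}[\mathsf{Ds}][1/\Delta]$ — or at any rate over its fraction field — with basis the monomials $\prod_i f_i^{a_i}$, $0 \le a_i \le n-1$, carrying a semilinear $S_n$-action whose invariants are exactly $\mathbb{K}[\mathsf{Ds}][1/\Delta]$ by Galois descent (Artin's lemma: $R^{S_n} = R \cap L^{S_n} = R \cap F$, and one checks $R\cap F = \mathbb{K}[\mathsf{Ds}][1/\Delta]$).

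Putting these together: an element of $\mathsf{SymDf} = \mathbb{K}[\mathsf{Df}]^{S_n}$ lies in $\mathbb{K}[\mathsf{Df}] \cap L^{S_n} \subseteq R \cap F$, and the identification $R \cap F = \mathbb{K}[\mathsf{Ds}][1/\Delta]$ finishes the proof. I expect the main obstacle to be the careful bookkeeping in the descent step — specifically verifying that $R \cap F$ is exactly $\mathbb{K}[\mathsf{Ds}][1/\Delta]$ and not something larger, which requires knowing that the integral closure of $\mathbb{K}[\mathsf{Ds}]$ issues are controlled once $\Delta$ is inverted, i.e. that $R$ is unramified over $\mathbb{K}[\mathsf{Ds}][1/\Delta]$. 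A clean way around this is to avoid integral-closure subtleties entirely by working at the level of fraction fields first (where Artin's theorem gives $L^{S_n} = F$ immediately) and then observing that any element of $\mathbb{K}[\mathsf{Df}]$ that is $S_n$-invariant, when rewritten via the explicit derivative formulas above as a polynomial in the $s_i$, their derivatives, and $1/\Delta$, manifestly has coefficients in $\mathbb{K}$ — so the representation in $\mathbb{K}[\mathsf{Ds}][1/\Delta]$ is produced constructively rather than by an abstract intersection argument. That constructive route is the one I would actually write up, since it also matches how the result is used in Section \ref{sec:reconstruction}.
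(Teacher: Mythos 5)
Your proposal is correct and is essentially the paper's argument: both differentiate the relation $P(f_i)=0$ to express the partial derivatives of $f_i$ as polynomials in $f_i$ with coefficients in $\mathbb{K}[\mathsf{Ds}]$ after inverting $P_x(f_i)$ via $\Delta=\prod_i P_x(f_i)$, and both conclude by symmetrizing over $S_n$ and invoking the fundamental theorem of symmetric polynomials over the coefficient ring $\mathbb{K}[\mathsf{Ds}][1/\Delta]$. The Galois-descent packaging you open with is an optional reformulation whose intermediate claims are slightly off (e.g.\ the monomials $\prod_i f_i^{a_i}$ with $0\le a_i\le n-1$ are not a basis --- the standard basis has $n!$ elements with $0\le a_i\le n-i$, and $K_0[f_1,\ldots,f_n][1/\Delta]$ is too large to be a finite module over $\mathbb{K}[\mathsf{Ds}][1/\Delta]$), but the constructive route you ultimately settle on is exactly the paper's proof.
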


\begin{proof}
Let ${\mathbf{a}}=(a_1,\ldots, b_m)$
and ${\mathbf{b}}=(b_1,\ldots,b_m)$
both be elements of $(\mathbb{Z}_{\geq 0})^m$. We define
\begin{enumerate}
\item[(i)] $\mathbf{a}\leq \mathbf{b}$ if $a_j \leq b_j$
for all $1\leq j \leq m$,
\item [(ii)] $\mathbf{a}<\mathbf{b}$ if
$\mathbf{a}\leq \mathbf{b}$ and $a_j < b_j$
for some $j$.
\end{enumerate}
Let $\mathsf{Ds}_{\leq \mathsf{b}}=\big \{ \frac{\partial^{a_1+\ldots +a_m}}{\partial z_1^{a_1}\cdots \partial z_m^{a_m}} s_i\big\}_{\mathsf{a}\leq \mathsf{b}}$
be a finite set of 
partial derivatives of the elementary symmetric functions $s_1, \ldots, s_n$.
Similarly, let 
$\mathsf{DY}_{< \mathsf{b}}=\big \{ \frac{\partial^{a_1+\ldots +a_m}}{\partial z_1^{a_1}\cdots \partial z_m^{a_m}} Y\big\}_{\mathsf{a}< \mathsf{b}}$
    be the finite set of partial
    derivatives of a single abstract function
    $Y(\mathbf{z})$.

    Consider the polynomial
    \begin{equation*} 
    P(x) = x^n + s_1 x^{n-1} + \ldots + s_n = \prod_{i}(x-f_i)\, .
    \end{equation*}
    We can take the  $\frac{\partial}{\partial z_j}$
    derivative of the relation $P(f_i) = 0$ :
    \begin{equation}\label{vfvf12}
    \frac{\partial f_i}{\partial z_j} P_x(f_i) + \frac{\partial s_1}{\partial z_j} f_i^{n-1} + \ldots+\frac{\partial s_{n-1}}{\partial z_j}f_i +\frac{\partial s_n}{\partial z_j}=0\, ,
    \end{equation}
    where $P_x$ is the derivative of $P$ as a polynomial in $x$. By a simple calculation, 
\begin{equation*}\label{eqn:discriminant_derivative}
        \Delta = \prod_{i=1}^n P_x(f_i)\, .
    \end{equation*}

Let $\mathbf{b}\in (\mathbb{Z}_{\geq 0})^m$.
By repeatedly taking derivatives
of \eqref{vfvf12}, we find that there is a universal polynomial
    \begin{equation*}
    \Phi_\mathbf{b}(\mathsf{Ds}_{\leq \mathbf{b}}, \mathsf{DY}_{<\mathbf{b}})
\in \mathbb{K}[\mathsf{Ds}_{\leq \mathbf{b}}, \mathsf{DY}_{<\mathbf{b}}]
    \end{equation*}
 which    satisfies the following property: for every $f_i$,
    \begin{equation} \label{pvii}
    \frac{\partial^{b_1+\ldots +b_m} f_i}{\partial z_1^{b_1}\cdots \partial z_m^{b_m}} 
    P_x(f_i) = \Phi_{\mathbf{b}}|_{Y=f_i}\, .
    \end{equation}
    Therefore, there is  a universal polynomial
    $
    \Omega_{\mathbf{b}}(\mathsf{Ds}_{\leq \mathbf{b}}, Y) \in \mathbb K[\mathsf{Ds}_{\leq \mathbf{b}},Y]
    $
    for which
    \begin{equation*}
     \frac{\partial^{b_1+\ldots +b_m} f_i}{\partial z_1^{b_1}\cdots \partial z_m^{b_m}} 
     P_x(f_i)^{N_{\mathbf{b}}} = \Omega_{\mathbf{b}}(
     \mathsf{Ds}_{\leq \mathbf{b}},
     f_i)\, 
    \end{equation*}
    for a (possibly large) integer $N_{\mathbf{b}}$. 
    
    We now take an arbitrary monomial in the functions of $\mathsf{Df}$:
$$M = \prod_{i=1}^n \prod_{u=1}^{v_i} 
\partial_{\mathbf{b}(i,u)} f_i\, ,$$
where we have used the notation
$$ \partial_{\mathbf{b}(i,u)} f_i =
\frac{\partial^{b_1(i,u)+\ldots +b_m(i,u)} f_i}{\partial z_1^{b_1(i,u)}\cdots \partial z_m^{b_m(i,u)}} \,.$$
By \eqref{pvii}, we have
\begin{equation} \label{mwwx}
\Delta^{\sum_{i=1}^n\sum_{u=1}^{v_i} N_{\mathbf{b}(i,u)}} \cdot M =
\prod_{i=1}^n \prod_{u=1}^{v_i} 
\Omega_{\mathbf{b}(i,u)}(\mathsf{Ds}_{\leq \mathbf{b}(i,u)},f_i) \cdot 
\Delta_i^{N_{\mathbf{b}(i,u)}}       \, ,
\end{equation}
where
$\Delta_i= \Delta / P_x(f_i)$.

Consider next the $S_n$-invariant element
$$\mathsf{sym} M= \sum_{\sigma\in S_n}\sigma(M)\in \mathsf{SymDf}\, .$$
Using \eqref{mwwx}, we obtain
\begin{equation}\label{mwwy}
\mathsf{sym} M= \Delta^{-\sum_{i=1}^n\sum_{u=1}^{v_i} N_{\mathbf{b}(i,u)}}
\sum_{\sigma\in S_n}\sigma\left(\prod_{i=1}^n \prod_{u=1}^{v_i} 
\Omega_{\mathbf{b}(i,u)}(\mathsf{Ds}_{\leq \mathbf{b}(i,u)},f_i) \cdot 
\Delta_i^{N_{\mathbf{b}(i,u)}}\right)\, 
\end{equation}
where the right side of \eqref{mwwy} lies in
$\mathbb{K}[\mathsf{Ds}][1/\Delta]$. 
As $M$ varies over all monomials in $\mathbb{K}[\mathsf{Df}]$, the elements 
$\mathsf{sym}M$ generate $\mathsf{SymDf}$. 
\end{proof}

\end{document}